\documentclass[10.9pt,a4paper, reqno]{amsart}
\usepackage{a4wide}

\usepackage[utf8]{inputenc}
\usepackage[english]{babel}

\usepackage{amsfonts,amssymb,amsmath}
\usepackage{graphicx}
\usepackage{wrapfig}
\usepackage{bm}
\usepackage{mathrsfs}
\usepackage{booktabs}
\usepackage{multirow}
\usepackage{array}

\usepackage{xcolor}
\usepackage[colorlinks,
    linkcolor={red!50!black},
    citecolor={blue!50!black},
    urlcolor={blue!80!black}]{hyperref}

\usepackage{tikz}
\usepackage[all]{xy}
\usepackage{enumitem}
\makeatletter
\newcommand{\mylabel}[2]{#2\def\@currentlabel{#2}\label{#1}}
\usetikzlibrary{calc, matrix, arrows, cd}

\newcommand{\bsm}{\left(\begin{smallmatrix}}
\newcommand{\esm}{\end{smallmatrix}\right)}

\newtheorem{theorem}{Theorem}[section]

\newtheorem{corollary}[theorem]{Corollary}
\newtheorem{lemma}[theorem]{Lemma}
\newtheorem{proposition}[theorem]{Proposition}

\theoremstyle{definition}
\newtheorem{definition}[theorem]{Definition}

\newtheorem{example}[theorem]{Example}
\newtheorem*{org}{Organisation}
\newtheorem*{ack}{Acknowledgements}
\newtheorem*{con}{Conventions}

\newtheorem{construction}[theorem]{Construction}

\newtheorem*{claim*}{Claim}

\newcommand{\into}{\hookrightarrow}
\newcommand{\ol}{\overline}
\newcommand{\wt}{\widetilde}
\newcommand{\wh}{\widehat}

\newcommand{\Sum}{\displaystyle \sum}

\newcommand{\Mod}[1]{\ \mathrm{mod}\ #1}

\newcommand{\im}{\operatorname{im}}
\newcommand{\coker}{\operatorname{coker}}
\newcommand{\aug}{\operatorname{aug}}
\newcommand{\proj}{\operatorname{proj}}

\newcommand{\Ext}{\operatorname{Ext}}

\newcommand{\Ad}{\operatorname{Ad}}

\newcommand{\Z}{\mathbb{Z}}

\newcommand{\C}{\mathbb{C}}
\newcommand{\F}{\mathbb{F}}

\newcommand{\Hom}{\operatorname{Hom}}
\newcommand{\Arf}{\operatorname{Arf}}

\newcommand{\ks}{\operatorname{ks}}

\newcommand{\SO}{\operatorname{SO}}
\newcommand{\id}{\operatorname{id}}

\title{Simple Slice Surfaces}
\author{Mark Pencovitch}
\date{}

\begin{document}
\begin{abstract}
    Given a simply-connected~\(4\)-manifold with boundary the~\(3\)-sphere, we give sufficient conditions for a knot in the boundary to bound a simply embedded locally flat surface in the~\(4\)-manifold of fixed nonzero genus, representing a fixed nonzero homology class. In certain indefinite cases, we also give conditions for these slice surfaces to be unique up to equivalence. 
\end{abstract}

\maketitle

\section{Introduction}

Let~\(N\) be a simply-connected~\(4\)-manifold with boundary~\(S^3\), and~\(K\) a knot in the boundary. We wish to determine the minimal genus of a locally flatly embedded surface in~\(N\) with boundary~\(K\). If such a surface has abelian fundamental group of the complement, we say that it is \emph{simply} embedded. In \cite{ACAB}, Conway, Orson, and the author established sufficient conditions for~\(K\) to bound a simply embedded disc in~\(N\) representing a nonzero second homology class~\(x\). This followed the work of Lee and Wilczy\'nski \cite{LWCommentarii}, who gave conditions for when such classes can be represented by spheres. Following \cite{ACAB}, and further work by Lee--Wilczy\'nski \cite{LWGenus} on closed surfaces, we give sufficient conditions for a homology class~\(x\) to be represented by a simply embedded surface of genus~\(g>0\) and boundary~\(K\).

\begin{theorem}\label{thm: main}
    Let~\(N\) be a compact, oriented, simply-connected~$4$-manifold with boundary~\(S^3\). Let~$x \in H_2(N, \partial N)$ be a nonzero class of divisibility~$d$, and suppose~\(K\subset S^3\) is a knot such that~\(H_1(\Sigma_d(K))=0\). The class~$x$ is represented by a simple surface of genus~\(g>0\) with boundary~\(K\)  if and only if 
    $$b_2(N) + 2g\geq \max_{0\leq j<d}\left\vert \sigma (N)-\frac{2j(d-j)}{d^2}\, x\cdot x+\sigma_K (e^{\frac{2\pi i j}{d}})\right\vert.$$
\end{theorem}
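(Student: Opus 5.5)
Both directions rest on the framework of \cite{ACAB}, with the genus-\(g\) surface handled via the closed-surface ideas of Lee--Wilczy\'nski \cite{LWGenus}.

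\textbf{Necessity.} Suppose \(\Sigma\subset N\) is a simple surface of genus \(g\) with boundary \(K\) representing \(x\). Since the class has divisibility \(d\), the complement \(W=N\setminus\nu\Sigma\) has \(H_1(W)\cong\Z/d\), and simplicity gives \(\pi_1(W)\cong\Z/d\). I pass to the \(d\)-fold cyclic cover \(\wt W\), or equivalently the \(d\)-fold branched cover \(\wt N\) of \(N\) along \(\Sigma\). Its boundary is \(\Sigma_d(K)\), which is a \(\Z\)-homology sphere by hypothesis. For each \(j\), I compute the signature of the \(e^{2\pi i j/d}\)-eigenspace of the intersection form on \(\wt N\) by the \(G\)-signature theorem for branched covers, in the form of Rokhlin/Viro. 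This gives
\[
\sigma_j(\wt N)=\sigma(N)-\tfrac{2j(d-j)}{d^2}\,x\cdot x+\sigma_K(e^{2\pi i j/d}).
\]
Here the knot signature arises as the boundary correction, exactly as in \cite{ACAB}. Since \(\pi_1(W)\) is finite cyclic, an Euler characteristic computation bounds the dimension of each eigenspace by \(b_2(N)+2g\). Indeed, \(\chi(\wt N)=d\chi(N)-(d-1)\chi(\Sigma)\) and \(\chi(\Sigma)=1-2g\), and the eigenspaces are equidimensional because the transfer forces \(H_1(\wt N;\C)\) to vanish (\(\wt W\) is simply connected). This yields \(|\sigma_j|\le b_2(N)+2g\). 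The \(j=0\) term is just \(|\sigma(N)|\le b_2(N)\).

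\textbf{Sufficiency.} First I reduce to the disc case. If the inequality holds for \(g\), then I look for a genus-\(0\) problem in \(N'=N\#g(S^2\times S^2)\), which has \(b_2\) larger by \(2g\) and the same signature. Take \(x'=x+\sum(\text{classes in the } S^2\times S^2 \text{ summands})\), chosen to have the same divisibility \(d\) and the same square \(x\cdot x\), for example \(x+d(a_i+ b_i)\)-type adjustments with zero square contribution. Then \(x'\) satisfies the disc criterion of \cite{ACAB}. This produces a simple disc \(\Delta\subset N'\) with boundary \(K\). The remaining task is to convert \(\Delta\subset N'\) into a genus-\(g\) surface in \(N\), by ambiently surgering each \(S^2\times S^2\) summand back along a sphere disjoint from \(\Delta\), at the cost of tubing and adding genus.

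I expect this conversion to be the main obstacle. The alternative, which I would try first, is to redo the construction of \cite{ACAB} directly. That means building the exterior \(W\) as a topological manifold with \(\pi_1=\Z/d\), boundary \(M_K\cup(\Sigma_g\times S^1)\)-type, and the prescribed equivariant intersection form on \(\pi_2\), then gluing in \(\Sigma\times D^2\) and identifying the result with \(N\) via Freedman's classification. Concretely, I would realise the required \(\Z[\Z/d]\)-hermitian form by surgery and Freedman's disc embedding, since \(\Z/d\) is good. The algebraic input is that a hermitian form of rank \(b_2(N)+2g\), with eigenspace signatures given by the formula above, exists and is compatible with the boundary linking form. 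The inequality is exactly the rank/signature constraint for this, and the extra \(2g\) gives room through hyperbolic summands contributed by \(H_1(\Sigma)\). I would then verify simplicity from \(\pi_1(W)\cong\Z/d\), which is abelian, and that the glued manifold is homeomorphic to \(N\) by comparing intersection forms and Kirby--Siebenmann invariants.
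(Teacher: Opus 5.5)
\textbf{There is a genuine gap in the sufficiency direction.}

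Your necessity argument is fine. It is Gilmer's branched-cover signature argument, which the paper simply quotes as \cite[Proposition~3.9]{ACAB}. One small correction: the equidimensionality of the nontrivial eigenspaces comes from a Lefschetz/character computation, $\operatorname{tr}(\tau^k \mid H_2(\wt N;\C))=\chi(\Sigma)-1$ for $k\neq 0$ (using $H_1(\wt N)=H_3(\wt N)=0$), not from a transfer.

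Your first route fails as stated. The disc criterion of \cite{ACAB} carries an Arf-invariant condition for characteristic classes, which the genus-$g$ statement does not, and your choice of $x'$ cannot always evade it. If $N$ is spin and $d$ is even, then $N\#^g(S^2\times S^2)$ is spin, so every class of divisibility $d$ is characteristic. The Arf condition only sees $\sigma$, $x'\cdot x'=x\cdot x$, $\Arf(K)$ and $\ks$, so it fails for $x'$ exactly when it fails for $x$. For example, take $N=(S^2\times S^2)^\circ$, $x=2(a+b)$ and $K$ the unknot. The inequality holds for $g=1$, but $\frac18(\sigma(N)-x\cdot x)=-1$ is odd, so no disc in $N\#(S^2\times S^2)$ represents such an $x'$. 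Even when a disc exists, tubing $\Delta$ off a surgery sphere costs one handle per cancelling pair of intersection points, and you cannot bound that number. Getting a framed embedded sphere disjoint from $\Delta$ at no cost is exactly the equivariant problem discussed next.

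Your second route names the surgery framework, but the sentence ``the inequality is exactly the rank/signature constraint'' is where the entire proof lives, and it misdescribes the problem. There is no theorem that directly realises a prescribed $\Z[\Z_d]$-form on a manifold with $\pi_1=\Z_d$ and boundary $(S^3\setminus\nu K)\cup(F\times S^1)$.

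The paper proceeds as follows.
\begin{enumerate}
\item Start from a simple genus-$g$ surface $F\subset N_k$, which exists with no extra hypothesis by \cite[Theorem~1.10(2)]{ACAB}.
\item Split $H(\Lambda)^{\oplus k}$ off the pointed form $(H_2(\Sigma_d(F)),\lambda,z)$, compatibly with $p_*$.
\item The hyperbolic generators are then orthogonal to $z$, so they lie in the image of $\pi_2(X_F)$.
\item Embed them by the sphere embedding theorem ($\Z_d$ is good) and surger them away.
\item Identify the result with $N$ by Freedman's classification.
\end{enumerate}

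Step 2 is a cancellation problem over $\Lambda=\Z[\Z_d]$ for the non-free module $\Lambda^{b_2(N)+2k}\oplus\Lambda_1^{2g}$. The genus contributes $\Lambda_1$-summands, not hyperbolic $\Lambda$-summands as you suggest. The Lee--Wilczy\'nski splitting theorem \cite[Theorem~3.1]{LWGenus} needs more than the indefiniteness supplied by the inequality:
\begin{itemize}
\item the pointed $\Z$-splitting induced by $p_*$;
\item evenness of $\lambda$ on $\ker\eta$;
\item for $d$ even and $b_2(N)\le 2$, a $2$-adic hyperbolicity condition controlled by an Arf invariant.
\end{itemize}
Establishing the module structure (via the model surface $D\natural^g T$ and a stable uniqueness theorem) and verifying these conditions is the substance of the proof. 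Your sketch provides none of it.
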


Our contribution is the if direction. The only if direction follows directly from \cite[Proposition~\(3.9\)]{ACAB}, largely due to the work of Gilmer \cite{GilmerConfiguration}. Furthermore, as noted in \cite{ACAB}, the `simple' adjective may be dropped from the obstruction if~\(d\) is a prime power. The case of~\(x=0\) was considered by Conway--Piccirillo--Powell \cite[Corollary~\(1.9\)]{ConwayPiccirilloPowell}, who give sufficient conditions to construct a surface in~\(N\), with boundary in~\(\partial N = S^3\), whose fundamental group of the complement is~\(\Z\). By \cite[Lemma~\(5.1\)]{ConwayPowell}, such surfaces are null-homologous.

We also obtain a uniqueness result for these simply embedded slice surfaces, under certain indefiniteness conditions by adapting the work of Hellsten \cite{Hellsten} and Sunukjian \cite{Sunukjian}.

\begin{theorem}\label{thm: UniqueIntro}
    Let~\(N\) be a compact, oriented, simply-connected~$4$-manifold with boundary~\(S^3\). Suppose there exists two compact, orientable surfaces~\(F_1\) and~\(F_2\) of genus~\(g>0\) properly, locally flatly embedded in~\(N\) such that \begin{enumerate}
        \item~\(\partial F_1 = \partial F_2 =: K \subseteq S^3 = \partial N\),
        \item~\([F_1]=[F_2]= x \in H_2(N,\partial N)\) a nonzero class of divisibility~$d$, and
        \item~\(\pi_1(N\backslash \nu F_1) \cong \Z_d \cong \pi_1(N\backslash \nu F_2)\).
    \end{enumerate}
    Furthermore, suppose that~\(H_1(\Sigma_d(K))=0\) and~\(b_2(N)>\vert\sigma(N)\vert +2\). If $$b_2(N) + 2g-2\geq \max_{0\leq j<d}\left\vert \sigma (N)-\frac{2j(d-j)}{d^2}\, x\cdot x+\sigma_K (e^{\frac{2\pi i j}{d}})\right\vert$$ then there is a homeomorphism of pairs~\((N,F_1) \cong (N,F_2)\).
\end{theorem}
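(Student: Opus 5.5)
The plan is to adapt the Hellsten--Sunukjian strategy for closed surfaces to the relative setting. First I will reduce the uniqueness of genus-$g$ surfaces to the uniqueness of genus-$(g-1)$ surfaces after a stabilization, and then use the genus-$g$ existence theorem (Theorem~\ref{thm: main}) together with a relative surgery/s-cobordism argument.

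\textbf{Step 1: compress each $F_i$ to the minimal genus allowed by the hypothesis.} The inequality with $2g-2$ says, by Theorem~\ref{thm: main}, that $x$ is represented by a simple surface of genus $g-1$ with boundary $K$. What I actually want is that each $F_i$ is obtained from a genus-$(g-1)$ simple surface $F_i'$ by adding a trivial (unknotted) handle.

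For this I will work in the exterior $X_i = N\setminus \nu F_i$. Here $\pi_1(X_i)\cong\Z_d$ and $H_1(\Sigma_d(K))=0$. I will compute $\pi_2(X_i)=H_2(\wt X_i)$, the second homology of the $d$-fold cover, as a $\Z[\Z_d]$-module with its equivariant intersection form. The Gilmer/Casson--Gordon computation of \cite[Proposition~3.9]{ACAB} gives, on each eigenspace, rank $b_2(N)+2g$ and signature the $j$-th term inside the maximum. So the hypothesis says that every eigenspace form has $|\sigma|\le \mathrm{rank}-2$. Together with $b_2(N)>|\sigma(N)|+2$ for the trivial eigenspace, this makes each form indefinite with room to spare.

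In this indefinite range I will look for a hyperbolic summand in $\pi_2(X_i)$ that is represented by a pair of dual embedded $2$-spheres. This should follow from Freedman's embedding theorem, since $\Z_d$ is good. The summand is realized geometrically by a compressing disc for $F_i$ together with its dual, which lets me write $F_i = F_i' \# T^2$ with $F_i'$ simple of genus $g-1$. I could alternatively proceed as Sunukjian does, via a relative Wall stabilization: $X_i\# S^2\times S^2$ versus $X_i$.

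\textbf{Step 2: compare the exteriors.} Both $X_1$ and $X_2$ have boundary $M = \partial(\nu F_i)\cup (S^3\setminus\nu K)$. This is the same $3$-manifold for both, with the same identification of the meridian, and each is a $\Z_d$-manifold with the same equivariant forms, up to the classification of forms. I will use the classification of $4$-manifolds with finite cyclic fundamental group and given boundary, following Hambleton--Kreck in the form used by Hellsten. It says that two such manifolds with isometric equivariant intersection forms, compatible with a fixed boundary identification and with equal Kirby--Siebenmann invariants, are homeomorphic rel boundary once the form contains enough hyperbolic summands. So I need:
\begin{enumerate}
\item an isometry of the $\Z[\Z_d]$-valued forms on $\pi_2(X_1)$ and $\pi_2(X_2)$ that induces the given identification on the boundary linking data;
\item equality of the Kirby--Siebenmann invariants, which is automatic since $\ks(X_i)=\ks(N)$.
\end{enumerate}
For the isometry, I will use the indefiniteness from Step 1: an extra hyperbolic summand lets me cancel stable isometry down to isometry, a Witt cancellation over $\Z[\Z_d]$ with one spare $H$. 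This is precisely where the $2g-2$ (rather than $2g$) bound and $b_2(N)>|\sigma(N)|+2$ enter.

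\textbf{Step 3: extend to $N$.} A homeomorphism $X_1\cong X_2$ rel $\partial$ that matches the meridian and longitude framings of $\nu F_i$ extends over $\nu F_1\to\nu F_2$, giving $(N,F_1)\cong(N,F_2)$. Because $H_1(\Sigma_d(K))=0$ and the surfaces are homologous, the boundary identification can be chosen to respect the pushoff framing determined by $x\cdot x$.

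The main obstacle is Step 2(1). I need to show that the equivariant forms are isometric compatibly with the boundary, and not just stably so. I would first try to show that both forms are stably isomorphic to $\lambda_{\wt{F'}}\oplus H(\Z[\Z_d])$, using Step 1 and the fact that both come from genus-$g$ surfaces. Then I would invoke Hambleton--Kreck cancellation, which needs one more hyperbolic summand than is being cancelled; that is supplied by the indefiniteness hypotheses.
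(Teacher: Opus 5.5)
There is a genuine gap, and it lies in Step~2 itself rather than only in the isometry you flag. Your Step~2 needs a classification theorem that turns an isometry of the $\Z[\Z_d]$-valued forms on $\pi_2(X_i)$, together with $\ks$, into a homeomorphism $X_1\cong X_2$ rel boundary. No such theorem is available here. Hambleton--Kreck's classification by forms is for closed manifolds, and the forms on $\pi_2(X_i)$ are degenerate: the image of $H_2(\wt{F}_i\times S^1)\cong\Z^{2g}$ lies in their radical. The Hambleton--Kreck cancellation you invoke at the end is a geometric statement. Its input is a \emph{stable homeomorphism} rel boundary $X_1\#^n(S^2\times S^2)\cong X_2\#^n(S^2\times S^2)$, not an algebraic stable isometry, and nothing in your proposal produces one.

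In the paper that stable homeomorphism is Theorem~\ref{thm: StableUnique}. It has three ingredients. Hellsten's concordance $Y\subset N\times I$ from $F_1$ to $F_2$ has exterior with $\pi_1\cong\Z_d$. Sunukjian's handle-trading argument then gives that exterior a decomposition with only $2$- and $3$-handles, the $2$-handles attached along null-homotopic curves. Finally, a $w_2$ argument excludes twisted summands.

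Step~1 does not hold up either, for four reasons.
\begin{enumerate}
\item A dual pair of embedded spheres in $X_i$ splits an $S^2\times S^2$ off the exterior. It is not a compressing disc for $F_i$, and it does not exhibit $F_i$ as a trivial stabilization of a genus-$(g-1)$ surface.
\item Indefiniteness of each eigenspace does not by itself produce an algebraic hyperbolic $\Z[\Z_d]$-summand, which Freedman's embedding theorem needs as input. Producing one is the job of the Lee--Wilczy\'nski splitting theorem and conditions (1)--(4) of Proposition~\ref{prop: suff}.
\item For $g=1$, Theorem~\ref{thm: main} says nothing about discs.
\item Even granting $F_i=F_i'\# T^2$, you would still need uniqueness in genus $g-1$, which you never supply.
\end{enumerate}

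The missing idea is to spend the $2g-2$ slack \emph{externally} on $N$ rather than internally on $F_i$. Since $b_2(N)>|\sigma(N)|+2$, Freedman's classification and the classification of indefinite forms give $N\cong N_0\#(S^2\times S^2)$. The $2g-2$ inequality for $N$ is then exactly the $2g$ inequality for $N_0$. After a self-homeomorphism of $N$ from Wall's theorem moves $x$ into $H_2(N_0)$, Theorem~\ref{thm: main} gives a simple surface $S\subset N_0$ of the \emph{same} genus $g$ with $\partial S=K$ and $[S]=x$. Its exterior in $N$ is $(N_0\setminus\nu S)\#(S^2\times S^2)$, so it visibly carries an $S^2\times S^2$ summand.

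One then compares each $F_i$ with $S$, not $F_1$ with $F_2$. Theorem~\ref{thm: StableUnique} gives $X_i\#^n(S^2\times S^2)\cong(N\setminus\nu S)\#^n(S^2\times S^2)$ rel boundary. Hambleton--Kreck cancellation for finite $\pi_1$ needs the extra $S^2\times S^2$ on only one side, so it yields $X_i\cong N\setminus\nu S$ rel boundary. Your Step~3 then extends this over the normal bundles. No compression of $F_i$, no hyperbolic summand in $\pi_2(X_i)$, and no isometry of equivariant forms is ever needed.
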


The strategy we employ to prove Theorem~\ref{thm: main} adapts the strategy of \cite{ACAB} to the case of nonzero genus, inspired by \cite{LWGenus}.
In particular, given a~\(4\)-manifold~\(N\), a knot~\(K\), and a class~\(x\) as in Theorem~\ref{thm: main}, we first stably represent~\(x\) by a simple surface in~\(N_k := N\#^k (S^2\times S^2)\). This is automatic, due to \cite[Theorem~\(1.10\)(2)]{ACAB}. Note that unlike the genus zero case, we do not require any extra condition to stably represent~\(x\). Then, by utilising Lee and Wilczy\'nski's splitting theorem \cite[Theorem~\(3.1\)]{LWGenus}, we show that a hermitian form associated to the stable embedding splits off hyperbolic modules. This provides the algebraic framework which lets us surger away the stabilisations in the complement of the surface. We then show that we are left with a simple surface representing~\(x\) in the destabilised manifold~\(N\). The majority of the paper is spent on the algebraic step, verifying that our geometric set-up allows us to apply the splitting theorem, dealing with the complications which arise in this nonzero genus case. 

\subsection{Applications}
We note some applications of our main result. Throughout this subsection, let~\(N\) be a simply-connected, compact~\(4\)-manifold with boundary~\(S^3\),~\(x\in H_2(N,\partial N)\) be nonzero class of divisibility~\(d\), and~\(K\) be a knot. In light of Theorem~\ref{thm: main}, we make the following definition.

\begin{definition}
    Define the \emph{simple~\((x,N)\)-genus of~\(K\)} to be \[g_{x,N}^\text{simple}(K) := \min\{g \ \vert \ \text{\(K\) bounds a simple surface of genus~\(g\) in~\(N\) representing~\(x\)}\}.\]
\end{definition}

By Theorem~\ref{thm: main}, if~\(H_1(\Sigma_d(K)) = 0\), then
 \begin{equation}\label{eq:simplegenus}
 g_{x,N}^\text{simple}(K) =\min_{g\geq0}\left\{ g \geq \frac{1}{2}\left( \max_{0\leq j<d}\left\vert \sigma (N)-\frac{2j(d-j)}{d^2}\, x\cdot x+\sigma_K (e^{\frac{2\pi i j}{d}})\right\vert -b_2(N)\right)\right\}.
 \end{equation}
 By \cite[Theorem~\(1.1\)]{ACAB}, genus~\(0\) is only obtained if~\(x\) is ordinary or~\(x\) is characteristic and \begin{equation}\label{eq: Arf}
 \Arf(K) +\ks(N) \equiv \frac{1}{8}(\sigma(N)-x\cdot x) \mod{2}.
 \end{equation}
By definition we have that~\(g_{x,N}^\text{simple} \geq g_{x,N}(K)\), and as noted in the introduction, we know that~\(g_{x,N}^\text{simple} = g_{x,N}(K)\) when~\(d\) is a prime power. When \eqref{eq: Arf} is satisfied,~\(g_{x,N}^\text{simple} (K) = \text{sn}^\text{simple}_{x,N}(K)\), the simple~\((x,N)\)-stabilising number of~\(K\), defined in \cite[Definition~\(1.11\)]{ACAB}. This is the minimum number of stabilisations~\(k\) needed to represent~\(x\) as a disc in~\(N\#^k (S^2\times S^2)\). The formulas given by \eqref{eq:simplegenus} and \cite[Corollary~\(1.13\)]{ACAB} coincide, because the equation $$b_2(N) + 2g\geq \max_{0\leq j<d}\left\vert \sigma (N)-\frac{2j(d-j)}{d^2}\, x\cdot x+\sigma_K (e^{\frac{2\pi i j}{d}})\right\vert$$ only changes by adding~\(2\) to the left hand side under both internally stabilising by adding genus to the surface and externally stabilising by adding a~\(S^2\times S^2\) summand to~\(N\).

\subsubsection{Some properties and examples}

From \eqref{eq:simplegenus}, we deduce some simple corollaries. 

\begin{corollary}
    When~\(x\) is primitive, then \[g_{x,N}^\text{simple}(K) = \begin{cases}
        0 & \text{if~\(x\) is ordinary or \eqref{eq: Arf} is satisfied} \\
        1 & \text{else}
    \end{cases}.\]
\end{corollary}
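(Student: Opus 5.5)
When $x$ is primitive we have $d=1$. The plan is to specialise \eqref{eq:simplegenus} to this case and combine it with the genus-zero criterion from \cite[Theorem~1.1]{ACAB}.

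First check the hypothesis. $\Sigma_1(K)=S^3$, so $H_1(\Sigma_1(K))=0$ holds for every $K$, and \eqref{eq:simplegenus} applies. With $d=1$ the maximum ranges only over $j=0$. There $\frac{2j(d-j)}{d^2}\,x\cdot x=0$ and $\sigma_K(e^{0})=\sigma_K(1)=0$, by the usual convention that the Levine--Tristram signature vanishes at $\omega=1$. The inequality therefore reduces to
\[
b_2(N)+2g\geq |\sigma(N)|.
\]
Since $|\sigma(N)|\le b_2(N)$ always, this holds for every $g\ge 0$. In particular $g=1$ satisfies it, so Theorem~\ref{thm: main} (which needs $g>0$) gives a simple genus-one surface representing $x$ with boundary $K$. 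Hence $g^{\text{simple}}_{x,N}(K)\le 1$ unconditionally.

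It remains to decide when genus $0$ is achieved. By \cite[Theorem~1.1]{ACAB}, as recorded right after \eqref{eq:simplegenus}, a simple disc exists exactly when the analogous signature inequality holds (automatic here, by the computation above) and either $x$ is ordinary, or $x$ is characteristic and \eqref{eq: Arf} holds. For primitive $x$ that ACAB condition is exactly the one in the statement. So the value is $0$ in that case and $1$ otherwise.

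The only delicate point is the bookkeeping: the genus-zero statement of \cite{ACAB} carries its own hypotheses, namely $H_1(\Sigma_d(K))=0$ and the signature bound. I would check that with $d=1$ these are vacuous, so that its condition collapses to ``ordinary, or characteristic with \eqref{eq: Arf}''. I would also confirm the convention $\sigma_K(1)=0$.
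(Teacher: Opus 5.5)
Your proposal is correct and follows the same route the paper implicitly takes. You specialise \eqref{eq:simplegenus} to $d=1$, where $H_1(\Sigma_1(K))=H_1(S^3)=0$ and the bound $b_2(N)+2g\geq|\sigma(N)|$ is automatic, so Theorem~\ref{thm: main} gives genus one. You then let \cite[Theorem~1.1]{ACAB} decide genus zero; there ``ordinary, or characteristic with \eqref{eq: Arf}'' is the same as the stated ``ordinary or \eqref{eq: Arf}'', since a non-ordinary class is characteristic.
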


Notice that in this primitive case, there is no restriction on the knot since~\(\Sigma_1(K) = S^3\). This result is analogous to the closed case discussed in \cite{LWKtheory}, where a primitive class can be represented by a sphere or a torus.

The next most straightforward case to consider is when~\(d=2\). As noted in \cite[Corollary~\(1.6\)]{ACAB}, the condition~\(H_1(\Sigma_2(K))=0\) is equivalent to the determinant of the knot being~\(\pm1\).

\begin{corollary}\label{Cor: d=2}
    When~\(x\) has divisibility~\(2\) and~\(\vert\det(K)\vert =1\), \[g_{x,N}(K) = \min_{g\geq0}\left\{ g \geq \frac{1}{2}\left( \left\vert \sigma (N)- \frac{1}{2} x\cdot x+\sigma (K)\right\vert -b_2(N)\right)\right\}.\]
\end{corollary}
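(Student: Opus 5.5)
The plan is to specialise \eqref{eq:simplegenus} to \(d=2\) and then replace the simple genus by the ordinary genus \(g_{x,N}(K)\). There are three steps: check the hypothesis of Theorem~\ref{thm: main}, simplify the maximum, and compare the two genera.

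For the hypothesis, take \(d=2\) in Theorem~\ref{thm: main}. The condition \(H_1(\Sigma_2(K))=0\) is equivalent to \(\vert\det(K)\vert=1\) by \cite[Corollary~1.6]{ACAB}, since \(\vert H_1(\Sigma_2(K))\vert=\vert\det K\vert\). So \eqref{eq:simplegenus} applies and gives a formula for \(g_{x,N}^\text{simple}(K)\).

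For the maximum, only \(j=0\) and \(j=1\) occur.
\begin{itemize}
\item At \(j=0\) the term is \(\vert\sigma(N)+\sigma_K(1)\vert=\vert\sigma(N)\vert\), because \(\sigma_K(1)=0\).
\item At \(j=1\) we have \(2j(d-j)/d^2=2/4=1/2\) and \(\sigma_K(-1)=\sigma(K)\), so the term is \(\vert\sigma(N)-\tfrac12 x\cdot x+\sigma(K)\vert\).
\end{itemize}
The \(j=0\) term never matters. Since \(\vert\sigma(N)\vert\le b_2(N)\), it contributes at most \(\tfrac12(\vert\sigma(N)\vert-b_2(N))\le0\) to the bound for \(g\), and \(g\ge0\) anyway. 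Dropping it leaves exactly the displayed expression, now for \(g^\text{simple}_{x,N}(K)\).

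For the comparison, \(d=2\) is a prime power. As noted after Theorem~\ref{thm: main}, the `simple' adjective may then be dropped from the obstruction, i.e.\ the only-if direction (\cite[Proposition~3.9]{ACAB}, via Gilmer) holds for arbitrary locally flat surfaces. This gives \(g_{x,N}(K)\ge\) the stated bound. In the other direction, \(g_{x,N}(K)\le g^\text{simple}_{x,N}(K)\) holds by definition, and the latter equals the bound by the previous step. This uses the if direction of Theorem~\ref{thm: main} for \(g>0\), and for \(g=0\) it uses \cite[Theorem~1.1]{ACAB}.

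The main obstacle is that genus-\(0\) case. Theorem~\ref{thm: main} only treats \(g>0\), so when the bound is \(\le0\) I must check that a disc is actually realised. For \(d=2\) the class \(x\) may be characteristic, and then \cite[Theorem~1.1]{ACAB} additionally requires the Arf condition \eqref{eq: Arf}. I will first try to show that \eqref{eq: Arf} follows automatically from the bound holding with \(g=0\) together with \(\vert\det K\vert=1\). If that fails, the corollary must be read with the caveat from the surrounding discussion that genus \(0\) additionally requires \(x\) ordinary or \eqref{eq: Arf}, and otherwise the answer is \(1\). Apart from this, the argument is bookkeeping.
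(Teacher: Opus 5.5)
Your reduction matches the paper's derivation, which is left implicit: specialise \eqref{eq:simplegenus} to \(d=2\), discard the \(j=0\) term via \(\vert\sigma(N)\vert\le b_2(N)\), and use that \(2\) is a prime power to pass from \(g^{\text{simple}}_{x,N}\) to \(g_{x,N}\). When the right-hand side is positive, this is complete.

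The gap is the one you flagged, and your first plan for closing it cannot succeed. The condition \eqref{eq: Arf} is not implied by the bound at \(g=0\), so the displayed formula is false as stated.

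For \(d=2\), the class \(x=2y\) is characteristic exactly when \(Q_N\) is even, i.e.\ when \(N\) is spin; then \(\ks(N)\equiv\sigma(N)/8\). Moreover \(\vert\det K\vert=1\) forces \(\Arf(K)=0\) (Levine), and forces \(\sigma(K)\equiv 0\) mod \(8\) because the symmetrised Seifert form is even and unimodular. So \eqref{eq: Arf} reduces to \(x\cdot x\equiv 0\) mod \(16\), and this can fail.

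Take \(N\) a punctured \(\#^2(S^2\times S^2)\) with hyperbolic basis \(e_1,f_1,e_2,f_2\), let \(x=2(e_1+f_1)\) (divisibility \(2\), characteristic, \(x\cdot x=8\)), and let \(K\) be the unknot. The formula returns \(\tfrac12(\vert 0-4+0\vert-4)=0\). Yet any locally flat disc would cap off to a characteristic sphere \(S\) in \(\#^2(S^2\times S^2)\) with \(\sigma-S\cdot S=-8\). The generalised Rokhlin theorem requires \(\sigma-S\cdot S\equiv 8\ks=0\) mod \(16\), a contradiction. Hence \(g_{x,N}(U)=1\), with genus \(1\) realised by Theorem~\ref{thm: main}. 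A punctured \(E_8\)-manifold with \(x\) twice a root gives another example.

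So only your fallback works: the corollary must carry the genus-zero caveat that the paper records after \eqref{eq:simplegenus} but omits here. In this setting the caveat reads: the formula holds, except that the value \(0\) must be replaced by \(1\) when \(N\) is spin and \(x\cdot x\equiv 8\) mod \(16\). A mod \(8\) count shows that in this exceptional case \(\vert\sigma(N)-\tfrac12x\cdot x+\sigma(K)\vert\ge 4\), so it only occurs when \(b_2(N)\ge 4\). In particular it does not affect the \(\C P^2\) examples, where \(x\) is ordinary.
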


For instance, in punctured~\(\C P^2\), the formula becomes \[g_{2,(\C P^2)^\circ}(K) = \min_{g\geq0}\left\{ g \geq \frac{1}{2}\left( \left\vert\sigma (K) - 1\right\vert -1\right)\right\}\] and the additional Arf invariant condition automatically vanishes since~\(2\) is an ordinary class. We now give example calculations for the case of torus knots bounding surfaces in punctured~\(\C P^2\) and~\(\overline{\C P^2}\).

\begin{example}
    Recall that the~\(d\)-fold branched cover of~\(T(p,q)\) is a Brieskorn manifold and in particular we have that~\(H_1(\Sigma_d(T(p,q))=0\) if and only if~\(d,p,q >0\) are pairwise coprime \cite{Brieskorn1966}. Thus in these cases we are able to apply Theorem~\ref{thm: main}. It also follows that ~\(g^\text{simple}_{d,\C P^2}(T(-p,q)) =g^\text{simple}_{d,\overline{\C P^2}}(T(p,q))\) for any coprime~\(p,q,d\). Note that for~\(N= \C P^2\), Equation~\eqref{eq: Arf} simplifies to \begin{equation}\label{eq: ArfCP2}
        \Arf(K) \equiv \frac{1}{8}(1- d^2)\mod{2}.
    \end{equation}
    In Tables \ref{Table 1} and \ref{Table 2}, we consider torus knots of the form~\(T(-2,q)\) and~\(T(-3,q)\) where~\(q\) is either~\(1\) or a prime number up to~\(17\), excluding the torus link cases of~\(T(-2,2)\) and~\(T(-3,3)\). When~\(q\) is odd, by \cite[Section~\(3\)]{LevinePolyInvariants} (see \cite[Remark~\(2.2\)]{NonorientableFourBallGenus}), we have that \begin{align*}
        \Arf(T(p,q)) = \begin{cases}
            0 & \text{if~\(p\) odd or~\(q\equiv \pm 1 \Mod{8}\)} \\
            1 & \text{if~\(p\) even and~\(q\equiv \pm 3 \Mod{8}\)}
        \end{cases} \ .
    \end{align*}
    Using this information we can produce Tables \ref{Table 1} and \ref{Table 2}.

\begin{table}[h]
\centering
\begin{minipage}{0.48\textwidth} 
  \centering
  \renewcommand{\arraystretch}{1.15}
  \begin{tabular}{c c!{\vrule width 1.2pt}c|c|c|c|c|c|c|}
    \multicolumn{2}{c}{} & \multicolumn{7}{c}{\textbf{d}} \\
    \multicolumn{1}{c}{} & \multicolumn{1}{c!{\vrule width 1.2pt}}{} & $1$ & $3$ & $5$ & $7$ & $11$ & $13$ & $17$ \\
   \noalign{\global\setlength\arrayrulewidth{1.2pt}}%
    \cline{2-9}%
    \noalign{\global\setlength\arrayrulewidth{0.6pt}}%
    \multirow{7}{*}{\textbf{q}}
      & $1$  & 0 & 1 & 5 & 11 & 29 & 41 & 71 \\ \cline{2-9}
      & $3$  & 1 & 0 & 4 & 10 & 28  & 40 & 70 \\ \cline{2-9}
      & $5$  & 1 & 0 & \textcolor{purple}{-} & 9 & 27 & 39 & 69 \\ \cline{2-9}
      & $7$  & 0 & 1 & 2 & \textcolor{purple}{-} & 26 & 38 & 68 \\ \cline{2-9}
      & $11$ & 1 & 2 & 1 & 6 & \textcolor{purple}{-} & 36 & 66 \\ \cline{2-9}
      & $13$ & 1 & 2 & 0 & 5 & 23 & \textcolor{purple}{-} & 65  \\ \cline{2-9}
      & $17$ & 0 & 4 & 1 & 4 & 21 & 33  & \textcolor{purple}{-} \\
    \cline{2-9}
  \end{tabular}
  \vspace{0.8em} 
  \caption{Values of~\(g_{d,\C P^2}(T(-2,q))\)}
  \label{Table 1}
\end{minipage}%
\hfill 
\begin{minipage}{0.48\textwidth} 
  \centering
  \renewcommand{\arraystretch}{1.15}
  \begin{tabular}{c c!{\vrule width 1.2pt}c|c|c|c|c|c|c|}
    \multicolumn{2}{c}{} & \multicolumn{7}{c}{\textbf{d}} \\
    \multicolumn{1}{c}{} & \multicolumn{1}{c!{\vrule width 1.2pt}}{} & $1$ & $2$ & $5$ & $7$ & $11$ & $13$ & $17$ \\
   \noalign{\global\setlength\arrayrulewidth{1.2pt}}%
    \cline{2-9}%
    \noalign{\global\setlength\arrayrulewidth{0.6pt}}%
    \multirow{7}{*}{\textbf{q}}
      & $1$  & 0 & 0 & 5 & 11 & 29 & 41 & 71 \\ \cline{2-9}
      & $2$  & 0 & 0 & 4 & 10 & 28  & 40 & 70 \\ \cline{2-9}
      & $5$  & 0 & 3 & \textcolor{purple}{-} & 8 & 26 & 38 & 67 \\ \cline{2-9}
      & $7$  & 0 & 5 & 1 & \textcolor{purple}{-} & 24 & 36  & 66  \\ \cline{2-9}
      & $11$ & 0 & 9 & 2 & 4 & \textcolor{purple}{-} & 34 & 64  \\ \cline{2-9}
      & $13$ & 0 & 11 & 3 & 3 & 20 & \textcolor{purple}{-} & 62 \\ \cline{2-9}
      & $17$ & 0 & 15 & 5 & 0 & 18 & 30 & \textcolor{purple}{-} \\
    \cline{2-9}
  \end{tabular}
  \vspace{0.8em} 
  \caption{Values of~\(g_{d,\C P^2}(T(-3,q))\)}
  \label{Table 2}
\end{minipage}
\end{table}
We make the following observations.
\begin{itemize}
    \item The first row of both tables correspond to the unknot, and hence matches the genus function of~\(\C P^2\) in the closed case determined by \cite{LWGenus}. Excluding this row, the diagonal entries cannot be determined by Theorem~\ref{thm: main} because each associated~\(d\)-fold branched cover has nonvanishing first homology, since~\(d,p,q\) are not pairwise coprime. However it is possible in some simple cases, like the trefoil knot, to construct a slice disc directly. See \cite[Remark~\(1.8\)]{ACAB}.
    \item Since each~\(d\) we consider is prime, we have that~\(g_{x,N}^\text{simple}(K)= g_{x,N}(K)\) for each entry in the tables.
    \item As mentioned previously, the~\(d=1\) case is either~\(0\) or~\(1\) depending on condition \eqref{eq: ArfCP2}. In the same way, for~\(T(-3,7)\) and~\(d=5\), the genus bound is satisfied for~\(g=0\), but the condition \eqref{eq: ArfCP2} is not, so the minimal genus is one. 
    \item For~\(d=2\) in Table~\ref{Table 2}, we see a demonstration of Corollary~\ref{Cor: d=2}. Indeed \[\sigma(T(-p,q)) = (p-1)(q-1) \implies g_{2,\C P^2}(T(-3,q)) = q-2.\]
\end{itemize}

For~\(d>1\), we can use the tables to find examples where the minimal topological~\((d,\C P^2)\)-genus of~\(T(p,q)\) is strictly smaller than the smooth~\((d,\C P^2)\)-genus of~\(T(p,q)\). For this we use the~\(\tau\)-invariant from Heegaard-Floer homology due to \cite{OzsvathSzabo2003}. In particular, they show that if~\(K\) bounds a smooth surface of genus~\(g\) in punctured~\(\C P^2\) representing the class~\(d>0\), then \[g \geq -\tau(K) + \frac{d(1-d)}{2}.\] Furthermore, they also show that for positive~\(p\) and~\(q\) we have~\(\tau(T(p,q)) = \frac{(p-1)(q-1)}{2}\) and~\(\tau(T(-p,q))= - \tau (T(p,q))\). Hence we can use the above inequality with the data in the tables to find non-smoothable embeddings.

For example, take the knot~\(T(-2,13)\) and~\(d=3\). From Table \eqref{Table 1},~\(g_{3,\C P^2}(T(-2,13)) = 2.\) However,~\(\tau(T(2,13))= 6\) and so the smooth genus of any surface with boundary~\(T(-2,13)\) in~\(\C P^2\) must satisfy~\(g\geq 3\). Thus, there cannot exist a smooth genus~\(2\) surface in~\(\C P^2\) representing the class~\(d=3\) with boundary~\(T(-2,13)\), but Theorem~\ref{thm: main} guarantees that such a surface exists topologically and simply.
\end{example}

One can construct similar examples using the~\(s\)-invariant and smooth bounds coming from a paper of Qin \cite{Qin}, or by referring to the tables of \cite{Pichelmeyer2020GeneraCP2}.

\subsubsection{Connected sum}

Here we examine how the simple~\((x,N)\)-genus function behaves under connected sum.
Indeed, suppose that~\(N= N_1\natural N_2\) such that~\(x= x_1\oplus x_2\), and~\(K=K_1\# K_2\). Suppose for simplicity that~\(x_1\) and~\(x_2\) both also have divisibility~\(d\). We have that~\(H_1(\Sigma_d(K))\cong H_1(\Sigma_d(K_1))\oplus H_2(\Sigma_d(K_2)) \), so if~\(H_1(\Sigma_d(K))=0\), the same is true for~\(K_1\) and~\(K_2\). 
A straightforward algebraic manipulation shows the following.

\begin{proposition}
\(g_{x_1\oplus x_2 ,N_1 \natural N_2}^\text{simple}(K_1\# K_2) \leq g_{x_1,N_1}^\text{simple}(K_1) + g_{x_2,N_2}^\text{simple}(K_2).\)
\end{proposition}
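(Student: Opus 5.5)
The plan is to work entirely with the closed formula \eqref{eq:simplegenus}, which Theorem~\ref{thm: main} gives us since \(H_1(\Sigma_d(K_i))=0\) for \(i=1,2\) and \(H_1(\Sigma_d(K))=0\). We also need \(x=x_1\oplus x_2\) to have divisibility exactly \(d\). Both \(x_i\) have divisibility \(d\), and the intersection form on \(H_2(N_1\natural N_2,\partial)\) is the orthogonal sum, so the divisibility of \(x\) is the gcd of the two divisibilities, namely \(d\). Write
\[
M_i := \max_{0\leq j<d}\left\vert \sigma (N_i)-\frac{2j(d-j)}{d^2}\, x_i\cdot x_i+\sigma_{K_i} (e^{\frac{2\pi i j}{d}})\right\vert ,
\]
and let \(M\) be the analogous quantity for \((N,x,K)\). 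By \eqref{eq:simplegenus}, \(g^\text{simple}_{x,N}(K)\) is the least \(g\geq 0\) with \(b_2(N)+2g\geq M\). Strictly speaking \eqref{eq:simplegenus} was stated via Theorem~\ref{thm: main}, which only concerns \(g>0\). For the \(g=0\) case I will instead use the disc criterion of \cite[Theorem~1.1]{ACAB}, which by the definition of \(g^\text{simple}\) and \cite[Theorem~1.10]{ACAB} has the same inequality as the one above, plus the ordinary or Arf condition \eqref{eq: Arf}. It therefore suffices to show that \(g=g_1+g_2\) satisfies the criterion for \((N,x,K)\), where \(g_i:=g^\text{simple}_{x_i,N_i}(K_i)\).

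\textbf{Additivity.} The terms \(\sigma\), \(b_2\), the self-intersection and the Tristram--Levine signature are all additive:
\[
\sigma(N)=\sigma(N_1)+\sigma(N_2),\quad b_2(N)=b_2(N_1)+b_2(N_2),\quad x\cdot x=x_1\cdot x_1+x_2\cdot x_2,\quad \sigma_K(\omega)=\sigma_{K_1}(\omega)+\sigma_{K_2}(\omega)
\]
for \(\omega=e^{2\pi i j/d}\), which is not a root of the Alexander polynomial because \(H_1(\Sigma_d(K))=0\). Hence the expression inside the absolute value for \((N,x,K)\) at each \(j\) is the sum of the corresponding expressions for \(i=1,2\). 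The triangle inequality, followed by bounding each term by its own maximum, gives \(M\leq M_1+M_2\). Since \(b_2(N_i)+2g_i\geq M_i\), we get \(b_2(N)+2(g_1+g_2)\geq M\).

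\textbf{The case \(g_1+g_2=0\).} If \(g_1+g_2>0\), Theorem~\ref{thm: main} now produces a simple surface of genus \(g_1+g_2\), which proves the inequality. If \(g_1=g_2=0\), the inequality holds, and I must check the extra disc condition. The class \(x\) is characteristic exactly when both \(x_i\) are. In that case both \(x_i\) satisfy \eqref{eq: Arf}, and since the Arf invariant, the Kirby--Siebenmann invariant \(\ks\), \(\sigma\) and \(x\cdot x\) are all additive, summing the two congruences gives \eqref{eq: Arf} for \((N,x,K)\).

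Alternatively, in this case one could argue geometrically: take the boundary connected sum of the two simple discs and check, via a van Kampen argument, that the complement has fundamental group \(\Z_d\). I would use the algebraic route above, however, since it only needs the results already stated. The only delicate point is this genus-zero bookkeeping; the rest is a direct consequence of additivity and the triangle inequality.
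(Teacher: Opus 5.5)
Your proposal is correct and follows the same route as the paper: the paper's ``straightforward algebraic manipulation'' is exactly the additivity of \(\sigma\), \(b_2\), \(x\cdot x\) and the Levine--Tristram signatures in \eqref{eq:simplegenus}, followed by the triangle inequality. The paper does not address the case \(g_1=g_2=0\) separately. Your check there is a correct refinement: a characteristic \(x_1\oplus x_2\) forces both summands to be characteristic, so the Arf congruences add.
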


A special case of this proposition is if we allow~\(x_2 =0 \) and~\(N_2= B^4\). Then~\(x_1=x\) and~\(N_1= N\). \[g_{x ,N}^\text{simple}(K_1\# K_2)= \min_{g\geq0}\left\{ g \geq \frac{1}{2}\left( \max_{0\leq j<d}\left\vert \sigma (N)-\frac{2j(d-j)}{d^2}\, x\cdot x+\sigma_{K_1} (e^{\frac{2\pi i j}{d}})+\sigma_{K_2} (e^{\frac{2\pi i j}{d}})\right\vert -b_2(N)\right)\right\}.\]
In particular, we have shown that if~\(K_2\) is a knot with vanishing Levine--Tristram signature, then connect summing with~\(K_2\) any number of times will not increase the simple~\((x,N)\)-genus of~\(K_1\). We will demonstrate the same principle in the next subsection by examining satellite operations.

\subsubsection{Satellite knots}
Here we explore the effect of the satellite operation on~\(g_{x,N}^\text{simple}(K)\), which turn out to be similar to the connected sum. We denote by~\(P(K)\) the satellite knot with pattern~\(P\).
There is a canonical isomorphism (see Litherland \cite{LitherlandCobordismSatellite}) given by \[H_1(\Sigma_d(P(K)) \cong H_1(\Sigma_d(P(U)) \oplus \bigoplus^{\gcd(w,d)}_{i=1} H_1(\Sigma_{d/i}(K))\] when the winding number~\(w\) is nonzero. When~\(w=0\),~\(H_1(\Sigma_d(P(K)) \cong H_1(\Sigma_d(P(U))\). In any case~\(H_1(\Sigma_d(P(K))=0\) implies that both~\(H_1(\Sigma_d(P(U))=0 = H_1(\Sigma_d(K))\).

Also by Litherland \cite{LitherlandCobordismSatellite} we have the formula \[\sigma_{P(K)}(\omega) = \sigma_{P(U)}(\omega) + \sigma_K(\omega^w)\] for any~\(\omega\in S^1\). This, combined with Theorem~\ref{thm: main}, implies that if~\(H_1(\Sigma_d(P(K))=0\) we get that \[g_{x,N}^\text{simple}(P(K)) =\min_{g\geq0}\left\{ g \geq \frac{1}{2}\left( \max_{0\leq j<d}\left\vert \sigma (N)-\frac{2j(d-j)}{d^2}\, x\cdot x+\sigma_{P(U)} (e^{\frac{2\pi i j}{d}})+\sigma_{K} (e^{\frac{2\pi i j w}{d}})\right\vert -b_2(N)\right)\right\}.\]
In particular, we find that~\(g_{x,N}^\text{simple}(P(K))  = g_{x,N}^\text{simple}(P(U) \# K) \) whenever the winding number is~\(1\).
\begin{example}
    For winding number~\(0\) satellite knots (or if~\(K\) has vanishing Levine--Tristram signature), we find that~\(g_{x,N}^\text{simple}(P(K)) = g_{x,N}^\text{simple}(P(U))\) depends only on the pattern.
\end{example}

\begin{example}
    For a knot~\(K\) with vanishing Levine--Tristram signature, the~\((p,q)\)-cable~\(C_K(p,q)\) of~\(K\) is such that~\(g^\text{simple}_{x,N}(C_K(p,q)) = g^\text{simple}_{x,N}(T(p,q))\) for any~\(N\) and any nonzero~\(x\), with divisibility coprime to~\(p\) and~\(q\).
\end{example}

This is helpful since calculating genus bounds on torus knots is straightforward. More generally, we can use this identification to get a bound similar to the type considered in \cite{Feller-Miller-PinzTopSliceSatellite}.

\begin{proposition}
    For~\(N\),~\(x\), and~\(K\) as in Theorem~\ref{thm: main}, we have that \[g_{x,N}^\text{simple}(P(K))  \leq g_{x,N}^\text{simple}(P(U)) +\frac{1}{2}\max_{0\leq j<d} \left\vert\sigma_{K} (e^{\frac{2\pi i j}{d}})\right\vert. \]
\end{proposition}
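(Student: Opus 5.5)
The plan is to use the formula \eqref{eq:simplegenus} for \(P(K)\) and for \(P(U)\), and compare the two quantities inside the maximum with the triangle inequality. First, the hypothesis: here ``\(K\) as in Theorem~\ref{thm: main}'' should mean that the satellite \(P(K)\) satisfies \(H_1(\Sigma_d(P(K)))=0\). By Litherland's decomposition quoted above, this gives \(H_1(\Sigma_d(P(U)))=0\) and \(H_1(\Sigma_d(K))=0\). So Theorem~\ref{thm: main}, in the form \eqref{eq:simplegenus}, applies both to \(P(K)\) and to \(P(U)\).

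For each \(0\le j<d\) write
\[A_j := \sigma(N)-\frac{2j(d-j)}{d^2}\,x\cdot x+\sigma_{P(U)}(e^{\frac{2\pi i j}{d}}).\]
By Litherland's signature formula, the corresponding quantity for \(P(K)\) is \(A_j+\sigma_K(e^{\frac{2\pi i jw}{d}})\). The triangle inequality then gives
\[\max_j\left\vert A_j+\sigma_K(e^{\frac{2\pi i jw}{d}})\right\vert \le \max_j\vert A_j\vert + \max_j\left\vert\sigma_K(e^{\frac{2\pi i jw}{d}})\right\vert .\]

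The only subtle point is the last term. As \(j\) runs over \(0,\dots,d-1\), the exponent \(jw\) reduces mod \(d\) to some residue \(j'\in\{0,\dots,d-1\}\). Since \(e^{2\pi i jw/d}=e^{2\pi i j'/d}\), we get
\[\max_j\left\vert\sigma_K(e^{\frac{2\pi i jw}{d}})\right\vert \le \max_{0\le j'<d}\left\vert\sigma_K(e^{\frac{2\pi i j'}{d}})\right\vert .\]
When \(w=0\) the term is just \(\sigma_K(1)=0\), so the same bound holds.

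Now set \(g_0=g^{\text{simple}}_{x,N}(P(U))\) and \(M=\max_j\vert\sigma_K(e^{2\pi i j/d})\vert\). By \eqref{eq:simplegenus}, \(b_2(N)+2g_0\ge \max_j\vert A_j\vert\). Hence
\[b_2(N)+2\bigl(g_0+\tfrac{M}{2}\bigr)\ge \max_j\left\vert A_j+\sigma_K(e^{\frac{2\pi i jw}{d}})\right\vert .\]
We need \(M/2\) to be an integer, or else we round up. Since \(H_1(\Sigma_d(K))=0\), the Alexander polynomial has no roots at the \(d\)-th roots of unity. The Levine--Tristram signature at these nonsingular points is then even, because it is the signature of a nondegenerate form of even rank \(2g\). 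So \(M\) is even.

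The genus \(g_0+M/2\) therefore satisfies the inequality of Theorem~\ref{thm: main}. This gives the bound on the minimum genus \(g^{\text{simple}}_{x,N}(P(K))\), provided the genus-zero case needs no extra care. By the genus-zero discussion following \eqref{eq:simplegenus} (from \cite[Theorem~1.1]{ACAB}), genus \(0\) can require the Arf condition \eqref{eq: Arf}. I expect this to be the main obstacle. If \(g_0+M/2\ge 1\), Theorem~\ref{thm: main} applies directly. If \(g_0+M/2=0\), then \(M=0\) and \(g_0=0\). In that case the disc for \(P(U)\) exists and the inequality for \(P(K)\) holds at \(g=0\). It remains to check the Arf condition for \(P(K)\). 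If \(w\) is odd, \(\Arf(P(K))=\Arf(P(U))+\Arf(K)\). Vanishing signatures at all \(d\)-th roots of unity do not obviously force \(\Arf(K)=0\). If this fails, I would fall back on reading \(g^{\text{simple}}\) via the right-hand side of \eqref{eq:simplegenus}, as the paper's phrasing suggests, where the bound is immediate from the displayed inequality.
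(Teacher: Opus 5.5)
Your main line is the paper's proof. It uses Litherland's formula \(\sigma_{P(K)}(\omega)=\sigma_{P(U)}(\omega)+\sigma_K(\omega^w)\), the triangle inequality on the maximum, and the remark that \(e^{2\pi i jw/d}\) is again a \(d\)-th root of unity. Your parity argument is also correct and worth keeping. By Fox's formula \(\prod_{j=1}^{d-1}\Delta_K(e^{2\pi i j/d})=\pm1\), so each factor is nonzero. Hence the Levine--Tristram forms at nontrivial \(d\)-th roots of unity are nondegenerate of even rank, and \(M\) is even. The paper's chain of equalities uses this silently.

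The genus-zero case you flag cannot be closed: for the literal minimal genus, the statement is false there. Take the following example.
\begin{itemize}
\item \(d=1\) and \(N\) the punctured \(\C P^2\).
\item \(x\) the generator. It is characteristic, with \(\sigma(N)=x\cdot x=1\) and \(\ks(N)=0\), so \eqref{eq: Arf} reads \(\Arf(K)\equiv 0\).
\item A pattern with \(w=1\) and \(P(U)=U\), such as the identity pattern or the Mazur pattern.
\item Companion \(K\) the trefoil.
\end{itemize}
The maximum on the right is just \(|\sigma_K(1)|=0\), so the right-hand side is \(g^{\text{simple}}_{x,N}(U)=0\). But \(\Arf(P(K))=\Arf(K)=1\), so \(g^{\text{simple}}_{x,N}(P(K))=1\) by the paper's own corollary for primitive classes (see the column \(d=1\) of Table~\ref{Table 1}).

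The paper's proof avoids this only by writing \(g^{\text{simple}}_{x,N}(P(K))=\frac12\bigl(\max_j|\cdots|-b_2(N)\bigr)\) as an equality. That is, it really proves the inequality for the right-hand side of \eqref{eq:simplegenus}, which is exactly your fallback.

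So do not leave it as a conditional. One option is to state and prove the result for that formula; your argument, with \(M\) even, is then complete. The other is to add a hypothesis that removes the corner case, for instance:
\begin{itemize}
\item \(x\) ordinary;
\item \(w\) even;
\item \(\Arf(K)=0\);
\item \(d\) even. Fox's product is \(\pm1\) and equals the integer factor \(\Delta_K(-1)\) times an algebraic integer, so \(\Delta_K(-1)=\pm1\) and hence \(\Arf(K)=0\).
\end{itemize}
Under any of these, either no Arf condition is imposed or \(\Arf(P(K))=\Arf(P(U))\), and your case \(g_0+M/2=0\) goes through.
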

\begin{proof}
    By the triangle inequality we get that 
    \begin{align*}
        g_{x,N}^\text{simple}(P(K)) &= \frac{1}{2}\left( \max_{0\leq j<d}\left\vert \sigma (N)-\frac{2j(d-j)}{d^2}\, x\cdot x+\sigma_{P(U)} (e^{\frac{2\pi i j}{d}})+\sigma_{K} (e^{\frac{2\pi i j w}{d}})\right\vert -b_2(N)\right) \\
        &\leq \frac{1}{2}\left( \max_{0\leq j<d}\left\vert \sigma (N)-\frac{2j(d-j)}{d^2}\, x\cdot x+\sigma_{P(U)} (e^{\frac{2\pi i j}{d}})\right\vert + \max_{0\leq j<d} \left\vert\sigma_{K} (e^{\frac{2\pi i j w}{d}})\right\vert -b_2(N)\right) \\
        & =g^\text{simple}_{x,N}(P(U)) + \frac{1}{2}\max_{0\leq j<d} \left\vert\sigma_{K} (e^{\frac{2\pi i j w}{d}})\right\vert \\
        &\leq g^\text{simple}_{x,N}(P(U)) + \frac{1}{2}\max_{0\leq j<d} \left\vert\sigma_{K} (e^{\frac{2\pi i j}{d}})\right\vert. \qedhere
    \end{align*}
\end{proof}

In particular we get that~\(g_{x,N}^\text{simple}(P(K))\leq g^\text{simple}_{x,N}(P(U)) +g_4(K)\). Notice that the bound does not depend on the winding number.

\subsubsection{Integer homology spheres} As a final application, it is worthwhile noting that in the same way as in \cite[Remark~\(1.9\)]{ACAB}, Theorem~\ref{thm: main} also holds if~\(\partial N\) is an integral homology sphere. Unlike the case of discs however, there is no extra term we add to the assumption. This means that the simple~\((x,N)\)-genus function is the same for~\(N\) having~\(S^3\) or integral homology sphere boundary, when the genus is nonzero.

\begin{org}
    We begin in Section~\ref{sec: Prelim} by providing some preliminary results. In Section~\ref{sec: Suff} we prove Theorem~\ref{thm: main} under certain algebraic assumptions. In Sections~\ref{sec: branchsplit}--\ref{sec: Cond5} we prove that these assumptions hold. In Section~\ref{sec: Unique}, we prove stable and unstable uniqueness results. 
\end{org}

\begin{ack}
    The author would like to thank Marco Golla for providing code to calculate the Levine--Tristram signature of torus knots, which greatly eased computations. The author would also like to thank Anthony Conway, Patrick Orson, and Mark Powell for advice and support during the writing of this paper.
\end{ack}

\begin{con}
    We will always work in the topological category and with locally flat embeddings. All manifolds are assumed be compact, connected, and oriented. Modules are assumed to be finitely generated. 
\end{con}

\section{Preliminaries}\label{sec: Prelim}

In this section we introduce notation which will be used throughout the rest of the paper and prove some preliminary results.

Suppose we are given a simply-connected~\(4\)-manifold~\(N\) with boundary~\(S^3\), a nonzero homology class~\(x\in H_2(N, \partial N)\) of divisibility~\(d\), an integer~\(g>0\), and a knot~\(K\) such that the first homology of its~\(d\)-fold branched cover is trivial. As noted in the introduction, we have the following result for~\(N_k := N\#^k (S^2\times S^2)\).

\begin{proposition}[{\cite[Theorem~\(1.10(2)\)]{ACAB}}]
    For some~\(k\geq 0\), we can represent the class~\[x\oplus 0 \in H_2(N, \partial N) \oplus H_2(\#^k S^2\times S^2) \cong H_2(N_k)\] as a simply embedded surface~\(F\) of genus~\(g\) and boundary~\(K\) inside~\(N_k\).
\end{proposition}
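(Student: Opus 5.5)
This proposition is cited from \cite[Theorem~\(1.10(2)\)]{ACAB}, so the plan is to reduce to that result rather than reprove it. Theorem~1.10(2) of \cite{ACAB} provides a \emph{simple disc} \(D\) with boundary \(K\) representing \(x\oplus 0\) in some stabilisation \(N_{k}\). Its complement has fundamental group \(\Z_d\), because the class has divisibility \(d\). To obtain genus \(g>0\), I will internally stabilise \(D\) \(g\) times: tube \(D\) to itself along \(g\) small unknotted arcs in a ball near an interior point, i.e.\ take the connected sum of the pair with \(g\) copies of the standard unknotted torus \((S^4,T^2)\). Call the result \(F\). This changes neither the boundary \(K\) nor the homology class, since the added tori are null-homologous in \(S^4\). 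The ambient manifold stays \(N_k\), because \(N_k\# S^4\cong N_k\).

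The key step is checking that \(F\) is still simple. The complement of the standard unknotted torus in \(S^4\) has fundamental group \(\Z\), generated by a meridian. By van Kampen, the complement of a pair connected sum is the union of the two complements glued along the complement of an unknotted arc in \(B^3\times I\), whose fundamental group is \(\Z\) and is generated by the meridian. So
\[
\pi_1(N_k\setminus \nu F)\cong \pi_1(N_k\setminus\nu D)*_{\Z}\Z\cong \pi_1(N_k\setminus \nu D),
\]
which is abelian (\(\cong\Z_d\)). Hence \(F\) is a simple surface of genus \(g\).

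A subtlety is that \cite[Theorem~1.10(2)]{ACAB} might only give a disc after stabilising when an Arf-type condition fails. If instead it directly provides surfaces of any genus, I will cite it verbatim. Otherwise I expect the main obstacle to be confirming that, when \(x\) is characteristic, a stable simple disc exists at all. If the Arf condition \eqref{eq: Arf} obstructs this, I will start from a simple genus-one surface, which \cite{ACAB} provides stably in that case, and then add the remaining \(g-1\) handles by the same tubing argument. Since \(g>0\) throughout, this suffices.
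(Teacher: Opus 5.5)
Your proposal is correct and, like the paper, ultimately rests on citing \cite[Theorem~\(1.10(2)\)]{ACAB}, which is exactly the unconditional positive-genus statement. Your tubing with unknotted tori and the computation \(\Z_d*_{\Z}\Z\cong\Z_d\) is the paper's own Construction in Section~\ref{sec: branchsplit}, with one wording slip: the amalgamating piece is the complement of an unknotted disc in \(B^3\times I\) (or of an unknot in \(S^3\) for an interior sum), not of an arc, since an arc in a \(4\)-dimensional region has simply-connected complement.

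Your opening reading of that theorem as giving a stable simple disc is wrong in general, however: for characteristic \(x\), condition \eqref{eq: Arf} is unchanged by adding \(S^2\times S^2\) summands and genuinely obstructs stable discs, so your genus-one (or direct genus-\(g\)) branch is required, not merely a contingency.
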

In order to obtain a surface in~\(N\) representing~\(x\), we perform surgery on~\(N_k\) away from~\(F\), so that both the surface and the homology class it represents is preserved. We guarantee that this is possible by examining the algebraic properties of the branched cover of the surface in the stabilised manifold.
Indeed, let~\(X_F: =N_k \backslash \nu F\). Since~\(F\) is embedded simply inside~\(N_k\) and represents a homology class of divisibility~\(d\), a short argument shows that~\(\pi_1(X_F)\cong H_1(X_F) \cong \Z_d\). Thus we can take the universal cover~\(X^d_F\) of~\(X_F\), and complete it to a~\(d\)-fold cyclic branched cover~\(\Sigma_d(F)\) of~\(N_k\) over~\(F\).
It turns out that~$\Sigma_d(F)$ is simply-connected. Indeed~\(\Sigma_d(F) = X_F^d \cup \nu \widetilde{F}\), glued along the induced trivialised circle bundle~\(\widetilde{F}\times S^1\). Hence \begin{align*}
        \pi_1(\Sigma_d(F)) &\cong \pi_1(X_F^d) *_{\pi_1(\widetilde{F})\times\pi_1(S^1)} \pi_1(\widetilde{F}) \\
        & \cong \{1\} *_{\pi_1(\widetilde{F})\times \Z} \pi_1(\widetilde{F}) \\
        & \cong \pi_1(\widetilde{F})/\langle \langle \im(\pi_1(\widetilde{F})\times\Z \to \pi_1(\widetilde{F}))\rangle\rangle \cong \{1\}.
    \end{align*}
We most often will want to think of~\(H_2(\Sigma_d(F))\) as a~\(\Z[\Z_d]\)-module, equipped with a hermitian form induced by the intersection form.

\subsection{Hermitian forms}

Let~\(\Lambda:= \Z[\Z_d]\) and let~\(H\) be a~\(\Lambda\)-module. Thus there is an action of~\(\Z_d\) on~\(H\), and~\(H^{\Z_d}\) will denote the submodule of elements fixed by the action.
Given a symmetric bilinear form~\(Q\colon H\times H \to \Z\), we have an induced hermitian form 
\begin{equation}\label{eq: defHermitian}
\begin{aligned}
\lambda \colon & H \times H \to \Lambda \\
& (x,y) \mapsto  \sum_{g \in \Z_d} Q(x,gy)g^{-1}.
\end{aligned}
\end{equation}
If~\(Q=Q_X\) is the intersection form on~\(X\), then we denote the induced hermitian form by~\(\lambda_X\). 

\begin{proposition}\label{prop: nonsingularity}
\label{prop:NonSingular}
    If~\(Q\) is nonsingular, then the induced hermitian form~\(\lambda\) is nonsingular.
\end{proposition}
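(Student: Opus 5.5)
Throughout, nonsingularity of \(\lambda\) means that the adjoint \(\widehat{\lambda}\colon H \to \overline{\Hom_\Lambda(H,\Lambda)}\), \(y\mapsto \lambda(-,y)\), is an isomorphism. Nonsingularity of \(Q\) means that \(\widehat{Q}\colon H\to \Hom_\Z(H,\Z)\) is an isomorphism. The plan is to factor \(\widehat{\lambda}\) as \(\widehat{Q}\) followed by the standard isomorphism \(\Hom_\Z(H,\Z)\cong \Hom_\Lambda(H,\Lambda)\). Implicitly we assume \(Q\) is \(\Z_d\)-invariant, i.e. \(Q(gx,gy)=Q(x,y)\). This is the situation of interest, since deck transformations preserve the intersection form, and it is needed for \(\lambda\) to be sesquilinear.

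First I would define
\[\Phi\colon \Hom_\Lambda(H,\Lambda)\to \Hom_\Z(H,\Z),\qquad \Phi(f)=\varepsilon\circ f,\]
where \(\varepsilon\) sends \(\sum_g a_g g\) to the coefficient \(a_e\). In the other direction I would define
\[\Psi(\phi)(x)=\sum_{g\in\Z_d}\phi(gx)\,g^{-1}.\]
One then checks that \(\Psi(\phi)\) is \(\Lambda\)-linear: replacing \(x\) by \(hx\) and reindexing the sum gives the factor \(h\), with the conjugation convention matched to the one in \eqref{eq: defHermitian}. Next, \(\Phi\circ\Psi=\id\), since the coefficient of \(e\) in \(\Psi(\phi)(x)\) is \(\phi(x)\). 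Finally, \(\Psi\circ\Phi=\id\), because for \(\Lambda\)-linear \(f\) the \(g^{-1}\)-coefficient of \(f(x)\) is \(\varepsilon(gf(x))=\varepsilon(f(gx))\). Hence \(\Phi\) is a bijection of abelian groups. It is additive, and it becomes an isomorphism of \(\Lambda\)-modules once the conjugated module structures on both sides are set up compatibly.

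Next I would verify that \(\Phi\circ\widehat{\lambda}=\widehat{Q}\), possibly up to the identification \(Q(x,y)=Q(y,x)\). Indeed, the \(e\)-coefficient of \(\lambda(x,y)\) is \(Q(x,y)\). Since \(\Phi\) and \(\widehat{Q}\) are bijections, \(\widehat{\lambda}\) is a bijection as well. So \(\lambda\) is nonsingular.

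The main obstacle I expect is bookkeeping rather than substance. One must check that \(\lambda\) is sesquilinear with the convention \(g\mapsto g^{-1}\), using the invariance of \(Q\), so that \(\widehat{\lambda}\) genuinely lands in \(\Lambda\)-linear maps into the conjugate dual and \(\Phi\) is \(\Lambda\)-equivariant. Bijectivity itself only needs the additive statement.
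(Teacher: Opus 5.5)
Your proposal is correct and is essentially the paper's argument in more structural form. The paper proves injectivity by reading off the \(e\)-coefficient \(Q(x,y)\) of \(\lambda(x,y)\), and proves surjectivity by writing \(f_e=Q(-,y)\) and rebuilding \(f\) from \(f_e\) using equivariance; these are exactly your identities \(\Phi\circ\widehat{\lambda}=\widehat{Q}\) and \(\Psi\circ\Phi=\id\). The only bookkeeping point is that, with the formula \eqref{eq: defHermitian} as written, \(\lambda(-,y)\) is conjugate-linear in the first slot, so you should apply \(\Phi\) to its pointwise conjugate; since \(\varepsilon\) is invariant under the involution, this changes nothing.
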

\begin{proof}
    We show that the adjoint map \[\Ad(\lambda)\colon H\to \Hom_{\Lambda}(H,\Lambda); \  \ y \mapsto \lambda(-,y) \] is an isomorphism. Indeed if~\(\Ad(\lambda)(y)=0\), then \[\lambda(x,y) = \sum_{g\in \Z_d} Q(x,gy)g^{-1} =0 \ \ \ \forall x\in H.\] Thus~\(Q(x,gy) =0\) for all~\(x\) and all~\(g\in \Z_d\). Since~\(\Ad(Q)\) is injective, we have that~\(x=0\). Thus~\(\Ad(\lambda)\) is injective.

    For surjectivity, suppose that~\(f\in \Hom_\Lambda(H, \Lambda)\). For all~\(x\in H\) we have that~\(f(x)= \sum_{i=0}^d f_{g^i}(x) g^i\). Note that since~\(f\) is a homomorphism, \[\sum_{i=0}^d f_{g^i}(g^kx) g^i = f(g^kx) =g^kf(x) =\sum_{i=0}^d f_{g^i}(x)g^{k+i}.\] Hence~\(f_{g^i}(g^{k}x)= f_{g^{i-k}}(x)\). Note that~\(f_{g^i}\colon H \to \Z\) is a homomorphism for each~\(i\). Since~\(\Ad(Q)\) is surjective, there exists a~\(y\in H\) such that~\(f_e(x) = Q(x,y)\) for all~\(x\in H\). Then \begin{align*}
        f(x) = \sum_{i=0}^d f_{g^{i}}(x) g^i = \sum_{i=0}^d f_{e}(g^{-i}x) g^i = \sum_{i=0}^d Q(g^{-i}x,y) g^i = \lambda(x,y).
    \end{align*}
    Thus~\(\Ad(\lambda)\) is surjective and hence an isomorphism as required.
 \end{proof}

Since by assumption~\(H_1(\Sigma_d(K))=0\), we have that~\(Q_{\Sigma_d(F)}\) is nonsingular. The same follows without assumption for~\(Q_{N_k}\) since~\(\partial N_k \cong S^3\). Hence by Proposition~\ref{prop: nonsingularity}, the induced hermitian forms~\(\lambda_{\Sigma_d(F)}\) and~\(\lambda_{N_k}\) on~\(H_2(\Sigma_d(F))\) and~\(H_2(N_k)\) are nonsingular.

A \textit{pointed} hermitian form~\((H, \lambda, z)\) is a hermitian form~\((H, \lambda)\) with a distinguished point~\(z\in H\). 

\begin{example}\label{exa: hyperbolic forms}
     For~\(R\) some ring with involution, 
     \[H(R) := (R^2, \mathcal{H}, 0)= (R\oplus R, \bsm 0 & 1 \\ 1 & 0 \esm, 0\oplus 0)\]
     is a pointed hermitian form known as the \textit{(pointed) hyperbolic form}. These arise in our setting as nonsingular hermitian forms associated to~\(Q_{S^2\times S^2}\).
\end{example}

We want to consider pointed forms so that we can track where certain elements are sent under isometries.
In the case of~\((H_2(N_k), \lambda_{N_k})\), we choose~\(x\oplus 0\). In the case of~\((H_2(\Sigma_d(F)), \lambda_{\Sigma_d(F)})\), recall that the branched set~\(\widetilde{F}\hookrightarrow \Sigma_d(F)\) represents a class~\([\widetilde{F}]\in H_2(\Sigma_d(F),\partial \Sigma_d(F))\). The branched cover~\(\Sigma_d(K)\) is a rational homology sphere, so in particular~\(H_2(\partial\Sigma_d(F))= H_2(\Sigma_d(K))=~0\). Furthermore, by our assumption we have that~\(H_1(\Sigma_d(K)) = 0\). Hence, by the long exact sequence of the pair~\((\Sigma_d(F),\Sigma_d(K))\), we get an isomorphism \[\iota_*\colon H_2(\Sigma_d(F))\cong H_2(\Sigma_d(F),\partial \Sigma_d(F)).\] We choose our distinguished point for~\((H_2(\Sigma_d(F)), \lambda_{\Sigma_d(F)})\) to be~\(z:=\iota^{-1}_*([\widetilde{F}])\). By the same proof as \cite[Lemma~\(3.7\)]{ACAB}, we have that~\(z\in H_2(\Sigma_d(F))^{\Z_d}\).

\subsection{Induced forms on submodules}\label{subsec: formsonsubs}

For~\(\Z_d=\{1,g,\dots,g^{d-1}\}\), let~\(\mathcal{N}:= 1+ g +\dots +g^{d-1} \subseteq \Lambda\) denote the norm element.
Let~\(I:= \ker(\aug\colon \Lambda\to \Z)\) denote the augmentation ideal. We have the quotient rings \begin{align*}
    \Lambda_0:= \Lambda / (I) \cong \Z &&  \Lambda_1:= \Lambda / (\mathcal{N})  && \Lambda_d:= \Lambda / (I,\mathcal{N}) \cong \Z_d
\end{align*}
with surjective maps~\(\Pi_i\colon \Lambda\to \Lambda_i\)
for~\(i=0,1,d\) which fit into the following pullback diagram, known as the Rim square.
\[\begin{tikzcd}[sep=small]
	{\Lambda} && {\Lambda_1} \\
	\\
	{\Lambda_0} && {\Lambda_d}
	\arrow["\Pi_1", from=1-1, to=1-3]
	\arrow["\Pi_0"', from=1-1, to=3-1]
	\arrow[from=1-3, to=3-3]
	\arrow[from=3-1, to=3-3]
\end{tikzcd}\]

Let~\(P\) be a~\(\Lambda\)-module. Let~\(T_\mathcal{N}P\) denote the submodule of~\(P\) consisting of~\(\mathcal{N}\)-torsion elements. Recall that~\(P^{\Z_d}\) denotes the fixed elements under the~\(\Z_d\)-action. 

\begin{itemize}
        \item Let~\(P_0\) denote the~\(\Lambda_0\)-module~\(P/T_\mathcal{N}P\).
        \item Let~\(P_1\) denote the~\(\Lambda_1\)-module~\(P/P^{\Z_d}\).
        \item Let~\(P_d\) denote the~\(\Lambda_d\)-module~\(P/(P^{\Z_d} +T_\mathcal{N}P)\).
\end{itemize}

The fact that~\(\mathcal{N}P \subseteq P^{\Z_d}\), and that~\(I=T_\mathcal{N}\Lambda\) means that these are well-defined modules over their respective quotient rings.

We would like to understand the induced hermitian forms on each of the above quotient modules. For~\(i=0,1,d\), let~\(\pi_i\colon P \to P_i\) be the surjective quotient map. Suppose~\((P, \lambda,z)\) is a pointed Hermitian form. Let~\(z_i:= \pi_i(z)\), and define \begin{align*}\lambda_i \colon & P_i\times P_i \to \Lambda_i \\
& (\pi_i(x),\pi_i(y)) \mapsto \Pi_i(\lambda(x,y))
\end{align*} recalling that~\(\Pi_i\colon \Lambda \to \Lambda_i\). One can check that~\((P_i, \lambda_i, z_i)\) is a well-defined pointed hermitian form over~\(\Lambda_i\).

Suppose that the~\(\Lambda\)-module~\(P\) splits as~\(H\cong M^n \oplus M_0^{n_0} \oplus M_1^{n_1}\), where~\(M\) is a projective~\(\Lambda\)-module and~\(M_i\) is a projective~\(\Lambda_i\)-module. Then by applying \cite[Proposition~\(3.1\)]{Bauer-Wilczynski2} and \cite[Remark~\(3.4\)]{ACAB} we get the following. 
\begin{proposition}\label{prop: SpecificRim}
    Suppose that~\((P,\lambda,z)\) is a nonsingular pointed hermitian form. Then \[\lambda_i\colon P_i \times P_i \to \Lambda_i\] is nonsingular for~\(i=0,d\) and nondegenerate for~\(i=1\) with
    \begin{align*}
        \coker(\Ad(\lambda_1)) \cong \Lambda_1^{n_1} .
    \end{align*}
    Furthermore, these forms produce the following pullback diagram of pointed hermitian modules. 
    \[\begin{tikzcd}[sep=scriptsize]
	{(P, \lambda,z)} && {(P_1, \lambda_1,z_1)} \\
	\\
	{(P_0, \lambda_0,z_0)} && {(P_d, \lambda_d,z_d)}
	\arrow["{{\pi_1}}", from=1-1, to=1-3]
	\arrow["{{\pi_0}}"', from=1-1, to=3-1]
	\arrow[from=1-3, to=3-3]
	\arrow[from=3-1, to=3-3]
\end{tikzcd}\]
\end{proposition}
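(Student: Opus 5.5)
Since \(P\cong M^n\oplus M_0^{n_0}\oplus M_1^{n_1}\) is a direct sum and every construction in the statement commutes with direct sums, I will reduce the statement to three separate computations: one for each of the summand types \(M\), \(M_0\) and \(M_1\). After that, the nonsingularity claims follow from the adjoint formalism for Milnor squares. For each summand type I first compute \(P_0\), \(P_1\) and \(P_d\).

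\begin{itemize}
\item For a projective \(\Lambda\)-module \(M\), I expect to find \(T_\mathcal{N}M=IM\) and \(M^{\Z_d}=\mathcal{N}M\). This holds because it holds for \(\Lambda\) itself, as noted in the paper (\(I=T_\mathcal{N}\Lambda\), and \(\Lambda^{\Z_d}=\mathcal{N}\Lambda\)), and both properties pass to direct summands of free modules. Hence \(M_i\cong \Lambda_i\otimes_\Lambda M\) for \(i=0,1,d\).
\item For a \(\Lambda_0\)-module \(M_0\), the group acts trivially, so \(M_0^{\Z_d}=M_0\) and \(T_\mathcal{N}M_0=0\), since multiplication by \(\mathcal{N}\) is multiplication by \(d\) on a torsion-free module. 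Thus \(M_0\) contributes \(M_0\) to \(P_0\) and nothing to \(P_1\) or \(P_d\).
\item For a \(\Lambda_1\)-module \(M_1\), \(\mathcal{N}\) acts by zero, so \(T_\mathcal{N}M_1=M_1\), and the fixed points are \(M_1^{\Z_d}=0\). Thus \(M_1\) contributes only to \(P_1\).
\end{itemize}

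With this decomposition, the square of modules is the Milnor square tensored with \(M\), together with the extra summands \(M_0^{n_0}\) and \(M_1^{n_1}\) placed at the corners. A direct check then shows that it is a pullback. The map \(\pi_1\) must land compatibly, and since every map is a quotient map the forms \(\lambda_i\) are compatible by construction. So the pullback of pointed forms holds once the module pullback holds and the points are \(z_i=\pi_i(z)\).

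For nonsingularity I compare adjoints. Nonsingularity of \(\lambda\) gives \(P\cong P^*=\Hom_\Lambda(P,\Lambda)\). The main obstacle is relating \(\Hom_{\Lambda_i}(P_i,\Lambda_i)\) to \(\Lambda_i\otimes P^*\) when \(P\) is not projective over \(\Lambda\); the summands \(M_0\) and \(M_1\) are the problematic ones. I would compute duals of each summand type directly.

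\begin{itemize}
\item \(\Hom_\Lambda(M_0,\Lambda)\) is identified with \(\mathcal{N}\cdot\Hom_{\Z}(M_0,\Z)\), and its image in \(\Lambda_0\) lands in \(d\Z\). This is why the Rim-square argument of \cite[Proposition~3.1]{Bauer-Wilczynski2} is needed: it carries the careful bookkeeping of these index-\(d\) discrepancies between \(\lambda_0\) and \(\Lambda_0\otimes\lambda\).
\item Dually, \(\Hom_\Lambda(M_1,\Lambda)\) is identified with \(I\)-valued homomorphisms, and \(I\cong\Lambda_1\) as modules. The adjoint of \(\lambda_1\) on the \(M_1\) summand then factors through the inclusion \(I\subset\Lambda\) composed with \(\Pi_1\), which has image \(\Pi_1(I)\subsetneq \Lambda_1\).
\end{itemize}

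Tracking these factors, \(\Ad(\lambda_1)\) is injective, hence nondegenerate. Its cokernel is \((\Lambda_1/\Pi_1(I))^{n_1}\) on the \(M_1^{n_1}\) part. Following \cite[Remark~3.4]{ACAB}, this is identified with the stated \(\Lambda_1^{n_1}\) and is the only failure of surjectivity. On the \(\Lambda_0\) and \(\Lambda_d\) corners, the \(d\)-factors from the \(M_0\) and \(M\) parts cancel against those introduced by \(\mathcal{N}\), giving nonsingularity of \(\lambda_0\) and \(\lambda_d\).

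Rather than redo this bookkeeping by hand, I would invoke \cite[Proposition~3.1]{Bauer-Wilczynski2} verbatim for the splitting and the pullback, and cite \cite[Remark~3.4]{ACAB} for the cokernel identification. The only work left is then to verify that our \(P_i\) agree with theirs, which is exactly the summand-by-summand computation in the first step.
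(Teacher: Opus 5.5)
Your route is the paper's route. The paper's proof consists only of applying \cite[Proposition~3.1]{Bauer-Wilczynski2} and \cite[Remark~3.4]{ACAB} to $P\cong M^n\oplus M_0^{n_0}\oplus M_1^{n_1}$. Your summand-by-summand computation of $P_0,P_1,P_d$ and of the module pullback is correct. The gap is in the two places where you invoke those references to reconcile your computation with the statement. In both places your own computation contradicts the statement, so no citation can close the gap.

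\textbf{The cokernel of $\Ad(\lambda_1)$.} We have $\Ad(\lambda_1)\circ\pi_1=\Pi_1\circ\Ad(\lambda)$, $\Ad(\lambda)$ is onto, and $\Hom_{\Lambda_1}(P_1,\Lambda_1)=\Hom_\Lambda(P,\Lambda_1)$ because $\Lambda_1^{\Z_d}=0$. So the image of $\Ad(\lambda_1)$ is the image of post-composition with $\Pi_1$ on $\Hom_\Lambda(P,\Lambda)$. Its cokernel can be computed summand by summand. It is $0$ on $M$ and on $M_0$. On each $\Lambda_1$ summand it is $\Lambda_1/\Pi_1(I)=\Lambda_1/(1-g)\cong\Z/d=\Lambda_d$, exactly as you found.

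This finite group cannot be ``identified'' with $\Lambda_1^{n_1}$. The map $\Ad(\lambda_1)$ is injective: if $\Pi_1\lambda(-,y)=0$, then $\lambda(-,y)$ is $\Z\mathcal{N}$-valued, so $\lambda(-,gy)=\lambda(-,y)$ and $y\in P^{\Z_d}$. It is a map between $\Z$-free modules of equal finite rank, so its cokernel is finite, whereas $\Lambda_1^{n_1}$ has rank $(d-1)n_1$. Concretely, take $d=2$ and $P=\Z^2$ with $g=-1$ and $Q$ unimodular (for example the unknotted torus). Then $\lambda=(1-g)Q$, $\lambda_1=2Q$, and $\coker\Ad(\lambda_1)\cong(\Z/2)^2$.

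So what is true is $\coker\Ad(\lambda_1)\cong\Lambda_d^{n_1}$, and the $\Lambda_1^{n_1}$ in the statement is a misprint. This is also the version Section~\ref{sec: Cond5} needs: under $g\mapsto-1$ one has $\Lambda_d\otimes_{\Lambda_1}\wh{\Z}_2\cong\F_2$, which is what produces the $2g$-dimensional kernel of $\wt{\lambda}$. You should have flagged this rather than attribute an impossible isomorphism to \cite[Remark~3.4]{ACAB}.

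\textbf{Nonsingularity of $\lambda_0$.} Your assertion that ``the $d$-factors \ldots\ cancel'' is false. By the same argument, the image of $\Ad(\lambda_0)$ is the image of post-composition with $\Pi_0$ on $\Hom_\Lambda(P,\Lambda)$, independently of $\lambda$. On an $M_0=\Z$ summand this image is $d\cdot\Hom_\Z(M_0,\Z)$, as you noted yourself, so $\coker\Ad(\lambda_0)\cong\Lambda_d^{n_0}$. For example, $P=\Z$ with trivial action and $\lambda(1,1)=\mathcal{N}$ is nonsingular, yet $\lambda_0(1,1)=d$. Hence $\lambda_0$ is nonsingular if and only if $n_0=0$. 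That hypothesis holds in the paper's application, where $H_2(\Sigma_d(F))\cong\Lambda^{b_2(N)+2k}\oplus\Lambda_1^{2g}$, but you must impose it explicitly.

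Your remaining claims are fine. $\lambda_d$ is nonsingular, since $P_d=\Lambda_d\otimes M$ and $\Hom_\Lambda(M,\Lambda)\to\Hom_\Lambda(M,\Lambda_d)$ is onto. The nondegeneracy of $\lambda_1$ and the pullback square also hold as you state them.
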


Note that the nonsingular pointed hermitian form~\((H_2(\Sigma_d(F)), \lambda_{H_2(\Sigma_d(F))}, z)\) satisfies this proposition by the results of Section~\ref{sec: branchsplit}. In particular~\(n_1=2g\).

With notations and conventions fixed, we can move on to the proof of our main theorem.

\section{Sufficient Conditions}\label{sec: Suff}

In this section we state a proposition which lets us prove Theorem~\ref{thm: main}.

\begin{proposition}
\label{prop: suff}
Let~$N$ be a simply-connected~\(4\)-manifold with boundary~\(S^3\), let~$x \in H_2(N, \partial N)$ be a nonzero class of divisibility~$d$, and let~$K \subset S^3$ with~$H_1(\Sigma_d(K))=0$. 
Suppose that~\(F \subset N_k\) is a simple surface of genus~\(g\) with boundary~\(K\) representing the class~\(x \oplus 0 \in H_2(N_k, \partial N_k)\). Then the following conditions hold.
\begin{enumerate}
    \item The~$\Lambda$-module~\(H_2(\Sigma_d(F))\) splits as~\(\Lambda^{b_2(N)+ 2k} \oplus \Lambda_1^{2g}\), a product of free~\(\Lambda\)- and~\(\Lambda_1\)-modules of the ranks shown.
    \item The branched covering projection~\(p\colon \Sigma_d(F)\to N_k\) induces an isometry \[\beta_0\colon (H_2(\Sigma_d(F))_0, \lambda_0,z_0)\xrightarrow{\cong} (H_2(N_k),Q_{N_k}, i_*^{-1}(x)\oplus 0)\] that satisfies~\(\beta_0\circ \pi_0 = p_*\), where~$i_* \colon H_2(N) \to H_2(N,\partial N)$ is the induced map from the long exact sequence of a pair.

    In particular, there exists an orthogonal splitting of pointed hermitian modules \[\beta_0\colon (H_2(\Sigma_d(F))_0, \lambda_0,z_0)\xrightarrow{\cong}  (H_2(N),Q_{N}, x) \oplus H(\Z)^{\oplus k}\] which only depends on the branched cover~\(p\) and the connected sum decomposition of~\(N_k\).
    \item  We have that~\(\lambda_{\Sigma_d(F)}(e,e) \in \Lambda_+ := \{r+ \ol{r} \ \vert \ r\in \Lambda\} \) for any element~\(e\) in the kernel of the projection map
    \[\begin{tikzcd}
	{\eta \colon H_2(\Sigma_d(F))} & {H_2(\Sigma_d(F))_0} & {H_2(N)\oplus \Z^{2k} } & H_2(N).
	\arrow["{\pi_0}", from=1-1, to=1-2]
	\arrow["{\beta_0}", from=1-2, to=1-3]
	\arrow["{\proj_1}",from=1-3, to=1-4]
    \end{tikzcd}\]
    \item If~\(d\) is even and~\(b_2(N)\leq2\), then~\((\widehat{M},\widehat{h}):= (H_2(\Sigma_d(F)_1, \lambda_1)\otimes_{\Lambda_1} \wh{\Z}_2\) has a hyperbolic submodule equivalent to~\(H(\widehat{\Z}_2)^{\oplus k}\), where~\(\wh{\Z}_2\) denotes the ring of~\(2\)-adic integers.
\end{enumerate}
\end{proposition}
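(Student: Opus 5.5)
We plan to verify the four conditions separately; (1) and (2) are topological and (3), (4) are algebraic consequences. Throughout, \(F\subset N_k\) is simple of genus \(g\), \(X_F=N_k\setminus\nu F\) has \(\pi_1\cong\Z_d\), and \(\Sigma_d(F)\) is simply-connected with \(\partial\Sigma_d(F)=\Sigma_d(K)\) an integral homology sphere.

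For (1) we follow the strategy of \cite{ACAB} and \cite{LWGenus}. Decompose \(\Sigma_d(F)=X_F^d\cup \nu\wt F\). First compute the \(\Z\)-rank by Euler characteristic: \(\chi(\Sigma_d(F))=d\chi(N_k)-(d-1)\chi(F)\), which gives \(b_2(\Sigma_d(F))=d(b_2(N)+2k)+2g(d-1)\). This matches the rank of \(\Lambda^{b_2(N)+2k}\oplus\Lambda_1^{2g}\). Next determine the \(\Lambda\)-module structure via Tate cohomology of \(\Z_d\) acting on \(H_2(\Sigma_d(F))\). The fixed set of the action is \(\wt F\cong F\). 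Since \(H_1(\Sigma_d(F))=0\) and \(H_3=0\), a Smith/Borel spectral sequence argument identifies \(\wh H^*(\Z_d;H_2)\) with the Tate cohomology contributed by \(H_*(F)\), namely \(H_1(F)\cong\Z^{2g}\) in odd degree and \(0\) in even degree after the cancellations. Then apply the classification of \(\Z[\Z_d]\)-lattices with prescribed Tate cohomology (as in \cite{Bauer-Wilczynski2}, or via the Rim square and Reiner's results in the cyclic case) to obtain a splitting \(M\oplus \Lambda_1^{2g}\) with \(M\) projective. Finally upgrade \(M\) to free, using stable freeness from the rank count together with the cancellation available for \(\Z[\Z_d]\) in this setting, as in \cite{ACAB}. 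This last step is where we expect the main obstacle, namely controlling the class of \(M\) in \(\wt K_0(\Lambda)\). Our first attempt will be to exhibit \(M\) as the second homology of a free chain complex coming from an equivariant handle decomposition of \(X_F^d\).

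For (2), consider the transfer and projection \(p_*\colon H_2(\Sigma_d(F))\to H_2(N_k)\). Since \(p_*\) kills \(\mathcal N\)-torsion modulo fixed classes, it factors through \(P_0=P/T_{\mathcal N}P\). Surjectivity comes from a transfer argument together with the fact that \(\Sigma_d(F)\to N_k\) is a branched cover with simply-connected total space. For the forms, \(p_*\) intertwines them because \(Q_{N_k}(p_*a,p_*b)=\sum_g Q(a,gb)=\aug\lambda(a,b)\). Since Proposition~\ref{prop: SpecificRim} gives that \(\lambda_0\) is nonsingular, the induced map is an injective isometry between nonsingular unimodular lattices of equal rank \(b_2(N)+2k\), and hence an isometry. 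For the basepoint, \(p_*z=i_*^{-1}(x)\oplus0\) because \(p\) maps \(\wt F\) homeomorphically onto \(F\). The splitting statement then follows from the connected sum decomposition of \(H_2(N_k)\).

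For (3), take \(e\in\ker\eta\). Then \(p_*e\) lies in the \(H(\Z)^k\) summand, so it is represented by a class in \(\#^k S^2\times S^2\), whose intersection form is even. We have \(\lambda(e,e)=\sum_g Q(e,ge)g^{-1}\), and this lies in \(\Lambda_+\) iff \(Q(e,e)\) is even, because the terms with \(g\neq g^{-1}\) pair up and, for \(d\) even, the middle coefficient \(Q(e,g^{d/2}e)\) must also be shown even. Evenness of \(Q(e,e)\) should follow from \(Q(e,e)\equiv Q_{N_k}(p_*e,p_*e)\) mod \(2\) (via a Wu class/characteristic-element argument for branched covers), and the middle term is handled similarly by an equivariant argument.

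For (4), tensoring with \(\wh\Z_2\) and using the pullback of (2), we lift the \(k\) hyperbolic summands of \(H(\Z)^k\) from \(\lambda_0\) to \(\lambda_1\otimes\wh\Z_2\). Hensel-type lifting over the \(2\)-adic completion, together with the fact that \(\Lambda_1\otimes\wh\Z_2\) is a product of local rings when \(d\) is even, should produce the required hyperbolic submodule.
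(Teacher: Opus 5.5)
There are genuine gaps in conditions (1) and (4), and (3) is only sketched.

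Your argument for (2) is fine once (1) is known, and differs slightly from the paper's. An isometric embedding between unimodular lattices of equal rank is automatically onto, and this replaces the paper's Mayer--Vietoris/Hurewicz proof that $p_*$ is surjective. (The transfer alone would only give $d\,H_2(N_k)\subseteq \im p_*$.)

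\textbf{Condition (1).} Your Euler characteristic count is correct but only fixes ranks. The step from Tate cohomology to a splitting $M\oplus\Lambda_1^{2g}$ with $M$ projective is not something you can quote. Milnor patching over the Rim square classifies projective $\Lambda$-modules, not arbitrary $\Lambda$-lattices, and no classification of $\Z[\Z_d]$-lattices by Tate cohomology is available for general $d$. Even for $d=p$ prime, Reiner's classification only produces $\Lambda_1$-ideal summands, whose ideal class must be pushed into $M$. The class $[M]\in\wt K_0(\Lambda)\cong\mathrm{Cl}(\Z[\zeta_p])$ is then exactly what has to vanish. A rank count carries no information about this class, so ``stable freeness from the rank count'' does not hold, and you leave this obstacle open. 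The paper avoids lattice theory altogether by a model computation plus stable uniqueness:
\begin{itemize}
\item it computes $H_2(\Sigma_d(T))\cong\Lambda_1^2$ for the unknotted torus $T\subset B^4$, from $H_2(X_T;\Z[\Z])\cong\Z[\Z]^2$;
\item it takes a disc $D$ stably representing $x$ with $H_2(\Sigma_d(D))$ free, using the genus zero results after arranging $x$ to be ordinary;
\item the model $D\natural^g T$ then has $H_2\cong\Lambda^{b_2(N)+2k'}\oplus\Lambda_1^{2g}$;
\item Theorem~\ref{thm: StableUnique} gives $(N_{k'+n},F)\cong(N_{k'+n},D\natural^g T)$, so $H_2(\Sigma_d(F))$ agrees with the model up to free summands, which are absorbed by enlarging $k$.
\end{itemize}

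\textbf{Condition (4).} Your plan misses the actual obstruction. Lifting the hyperbolic basis of $\beta_0^{-1}(H(\Z)^{\oplus k})$ and reducing mod $2$ gives a subspace $W\subseteq\wt M$ that is hyperbolic for the bilinear form, because $g\mapsto 1$ and $g\mapsto -1$ agree mod $2$. By (3) the lifts are even, so $W$ carries a quadratic refinement $\wt q$. An even unimodular $\wh\Z_2$-form is determined by its mod $2$ quadratic form, so the lift of $W$ is hyperbolic over $\wh\Z_2$ if and only if $\Arf(\wt q|_W)=0$. No Hensel argument changes this. For example, $\bsm 2&1\\1&2\esm$ is even and unimodular over $\wh\Z_2$ and is hyperbolic mod $2$ as a bilinear form. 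It is not hyperbolic, since its determinant $3$ differs from $-1$ by the non-square $-3$.

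The paper's proof of (4) is devoted to this Arf invariant:
\begin{itemize}
\item if a radical vector $v$ of $\wt\lambda$ has $\wt q(v)=1$, it modifies $W$ by $v$;
\item otherwise it passes to the nonsingular quotient $\wt M/\ker\wt\lambda$, where a dimension count using $b_2(N)\le 2$ shows that either there is a spare hyperbolic plane or $W$ fills the quotient;
\item in the latter case it shows the Arf invariant vanishes by comparing $F$ with a surface in $N$ pushed into $N_k$. This uses invariance of the Arf invariant under internal and external stabilisation, together with the uniqueness theorem.
\end{itemize}
That your argument never uses $b_2(N)\le 2$ is a symptom of this omission.

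\textbf{Condition (3).} Your reduction to evenness of $Q(e,e)$ and $Q(e,g^{d/2}e)$ is correct. For $d$ odd it closes, since $Q_{N_k}(p_*e,p_*e)=\sum_h Q(e,he)\equiv Q(e,e)$ mod $2$. For $d$ even, however, the congruence $Q(e,e)\equiv Q_{N_k}(p_*e,p_*e)$ you invoke is equivalent to evenness of the middle coefficient, which is what you are trying to prove. A real input is needed at exactly this point. The paper uses the argument of \cite[Section~6]{ACAB}, whose only new ingredient is $H_2(\Sigma_d(F))^{\Z_d}=\mathcal N\cdot H_2(\Sigma_d(F))$, deduced from (1).
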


We will now show that assuming that this proposition holds implies that Theorem~\ref{thm: main} holds. In the proof we use Lee-Wilczy\'nski's splitting theorem \cite[Theorem~\(3.1\)]{LWGenus} to show that~\(H_2(\Sigma_d(F))\) splits off hyperbolic summands as a~\(\Lambda\)-module, corresponding to the~\(S^2\times S^2\) factors of~\(N_k\). This lets us surger these factors away, in order to destabilise down to~\(N\) away from~\(F\).

\begin{proof}[Proof of Theorem~\ref{thm: main}]
    The only if direction follows from \cite[Proposition~\(3.9\)]{ACAB}. Conversely, by \cite[Theorem~\(1.10(2)\)]{ACAB}, we can represent~\(x\oplus 0\) as simple surface~\(F\) of genus~\(g\) with boundary~\(K\) embedded in~\(N_k\).
    The hypothesis and the four conditions of Proposition~\ref{prop: suff} guarantees that this set up satisfies the conditions of the splitting theorem \cite[Theorem~\(3.1\)]{LWGenus}. The output of the splitting theorem is that there exists isometries~\(\alpha\),~\(\beta\), and~\(\beta_0'\) such that the following diagram commutes.
\begin{equation}\label{eq: The Splitting Theorem}
\begin{tikzcd}
	{(H_2(\Sigma_d(F)),\lambda_{\Sigma_d(F)},z)} &&& {(\Lambda^{b_2(N)}\oplus \Lambda_1^{2g},\lambda',z') \oplus H(\Lambda)^{\oplus k} } \\
	\\
	&&& {(\Z^{b_2(N)},\lambda_0',z_0') \oplus H(\Z)^{\oplus k} } \\
	\\
	{(H_2(\Sigma_d(F))_0,(\lambda_{\Sigma_d(F)})_0,z_0)} &&& {(H_2(N),Q_N,i_*^{-1}(x)) \oplus H(\Z)^{\oplus k}}
	\arrow["\alpha", from=1-1, to=1-4]
	\arrow["\cong"', from=1-1, to=1-4]
	\arrow["{\pi_0}"', from=1-1, to=5-1]
	\arrow["{\pi_0}", from=1-4, to=3-4]
	\arrow["\cong"',"{\alpha_0}", from=5-1, to=3-4, bend left =14]
	\arrow["\cong"', from=5-1, to=5-4]
	\arrow["{\beta_0}", from=5-1, to=5-4]
	\arrow["{\beta_0' \oplus \id}", from=3-4, to=5-4]
	\arrow["\cong"', from=3-4, to=5-4]
\end{tikzcd}
\end{equation}
In particular, there is an isometry~\(\alpha\) of pointed hermitian forms which splits~\(H_2(\Sigma_d(F))\) as
\begin{equation}
\label{eq:PointedSplitting}
(H_2(\Sigma_d(F)),\lambda_{\Sigma_d(F)},z) 
\cong 
(\Lambda^{b_2(N)}\oplus \Lambda_1^{2g},\lambda',z') \oplus H(\Lambda)^{\oplus k}.
\end{equation}
Our goal is to show that we can represent generators of the hyperbolic summand downstairs as spheres in the complement~\(X_F\), so that we may surger and destabilise away from~\(F\).

First we show that the hyperbolic summand lies in the image of the inclusion induced map~\[j_*\colon H_2(X_F^d) \to H_2(\Sigma_d(F)).\] Indeed, by decomposing~\(\Sigma_d(F)\) as~\(X_F^d \cup \nu \widetilde{F}\), where~\(\widetilde{F}= p^{-1}(F)\) is the branched surface as above, we get the following portion of the Mayer-Vietoris sequence.
\[\begin{tikzcd}
	0 & {H_2(\widetilde{F}\times S^1)} & {H_2(X_F^d)} & {H_2(\Sigma_d(F))} & {H_1(\widetilde{F}\times S^1)} & {H_1(\nu \widetilde{F})} & 0
	\arrow[from=1-1, to=1-2]
	\arrow[from=1-2, to=1-3]
	\arrow["{j_*}", from=1-3, to=1-4]
	\arrow["\partial", from=1-4, to=1-5]
	\arrow["f", from=1-5, to=1-6]
	\arrow[from=1-6, to=1-7]
\end{tikzcd}\] 
since~\(H_2(\nu \widetilde{F})=0= H_1(X^d_F)\). This simplifies to 
\[\begin{tikzcd}
	0 & {\Z^{2g}} & {H_2(X_F^d)} & {H_2(\Sigma_d(F))} & {\Z^{2g} \oplus \Z} & {\Z^{2g}} & 0.
	\arrow[from=1-1, to=1-2]
	\arrow[from=1-2, to=1-3]
	\arrow["{j_*}", from=1-3, to=1-4]
	\arrow["\partial", from=1-4, to=1-5]
	\arrow["f", from=1-5, to=1-6]
	\arrow[from=1-6, to=1-7]
\end{tikzcd}\]
First notice that the map~\(f\) is just the projection onto the first factor. In particular, this means that the image of~\(\partial\) is~\(\Z\) generated by the circle fibre~\(\{p\}\times S^1 \subseteq \wt{F}\times S^1\). The image of the map~\(j_*\) is the kernel of this connecting homomorphism~\(\partial\).
For a class~\(a\in H_2(\Sigma_d(F))\), take a surface representative~\(A\) embedded in~\(\Sigma_d(F)\). Under the decomposition~\(\Sigma_d(F) = X_F^d \cup \nu \widetilde{F}\), the connecting homomorphism~\(\partial a \in H_1(\{p\}\times S^1)\cong  \Z\) measures the intersection of~\(A\) with the circle bundle~\(\partial \nu \wt{F}\cong \wt{F}\times S^1\) projected to~\(S^1\) in homology. By making~\(A\) transverse to~\(\wt{F}\), we see that this is the same as counting the signed intersections of~\(A\) with~\(\wt{F}\). Hence we get that \[\partial a = Q_{\Sigma_d(F)}^\partial (a,[\wt{F}]) = Q_{\Sigma_d(F)}(a,z),\] and therefore 
\[ \ker(\partial)=\{ y \in H_2(\Sigma_d(F)) \mid Q_{\Sigma_d(F)}(y,z)=0\}.\]
Using this, we can show that~\(H(\Lambda)^{\oplus k} \subseteq \ker(\partial) \cong \im(j_*)\). Indeed, write~\(H(\Lambda)^{\oplus k}\) as the pointed form~\((\Lambda^{2k}, \mathcal{H}^k, 0\oplus 0)\), where~\(\mathcal{H}^k\) is the direct sum of~\(k\) standard hyperbolic forms~\(\mathcal{H}\). Since the isometry~\(\alpha\) which splits~\(H_2(\Sigma_d(F))\) is pointed,~\(\alpha(z) = (z', 0)\) for some~\(z'\in \Lambda^{b_2}\oplus \Lambda_1^{2g}\), and the intersection form~\(\lambda_{\Sigma_d(F)}\) splits as~\(\lambda' \oplus \mathcal{H}\). Thus, for~\(a\) contained in the hyperbolic summand we can write \[a= (0,a')\in H_2(\Sigma_d(F))\cong  (\Lambda^{b_2}\oplus \Lambda_1^{2g})\oplus \Lambda^{2k},\] and notice that \[\lambda_{\Sigma_d(F)}(a,z) = \lambda'(0,z') + \mathcal{H}(a',0) = 0.\]
Now, by definition of~\(\lambda_{\Sigma_d(F)}\), using the fact that~\(z\in H_2(\Sigma_d(F))^{\Z_d}\), \[0= \lambda_{\Sigma_d(F)}(a,z) = \sum_{g \in \Z_d} Q_{\Sigma_d(F)}(a,gz)g^{-1}= \sum_{g \in \Z_d} Q_{\Sigma_d(F)}(a,z)g \implies Q_{\Sigma_d(F)}(a,z)=0.\]
Thus the hyperbolic summand lives in the image of the inclusion map~\(j_*\) as required.

Now we are able to perform surgery downstairs to get rid of the hyperbolic summands. In particular, since~\(X_F^d\) is a universal cover, we get the following composition of maps.
\[\begin{tikzcd}
	{\pi_2(X_F)} & {\pi_2(X_F^d)} & {H_2(X^d_F)} & {H_2(\Sigma_d(F))}
	\arrow["\cong"',"{p_*^{-1}}", from=1-1, to=1-2]
	\arrow["\cong"',"h", from=1-2, to=1-3]
	\arrow["{j_*}", from=1-3, to=1-4]
\end{tikzcd}\]
The image of which contains the hyperbolic summand of~\(H_2(\Sigma_d(F))\). By further composing with the branched cover projection map~\(p_*\) and using the diagram \eqref{eq: The Splitting Theorem} from the splitting theorem, we see that the image~\(\pi_2(X_F)\to H_2(N_K)\) contains the~\(H_2(\#^k S^2\times S^2)\) summand.

By pulling back this hyperbolic summand, we obtain a collection of~\(2k\) immersed spheres in~\(X_F\) representing the generators of~\(H_2(\#^kS^2\times S^2)\). Since~\(\pi_1(X_F) \cong \Z_d\) is a good group, we can apply the Sphere Embedding Theorem ~\cite{Freedman} (see \cite[p.~287]{DETBook}) to upgrade this to a collection of framed embedded spheres in~\(X_F\) representing the generators. We are able to guarantee that we satisfy the hypotheses of the Sphere Embedding Theorem since we are working with a hyperbolic submodule, and so we can find framed algebraic dual spheres in~\(X_F\) for each of the generators. See \cite[Theorem~\(2.3\)]{DET-Selecta}. 

The rest of the surgery procedure follows exactly as in \cite[Proof of Proposition~\(3.11\)]{ACAB}. That is, we surger along each of the embedded spheres, which yields a~\(4\)-manifold~\(X'\) such that~\(\partial X' = \partial X_F\),~\(\pi_1(X') \cong \pi_1(X_F)\), and~\(H_2(X')\cong H_2(N)\). Define~\(N' := X' \cup (F\times D^2)\), glued in the same way that~\(N = X_F \cup (F\times D^2)\) is glued. This is possible since the boundary of~\(X_F\) is unchanged by the surgeries. By using the isometries given by the splitting theorem, the branched cover of~\(N'\) along~\(F\), denoted by~\(\Sigma_d(F)'\) has pointed hermitian form \[\begin{tikzcd}
	{(H_2(\Sigma_d(F)'), \lambda_{\Sigma_d(F)'}, z')} && {(\Lambda^{b_2(N)}\oplus \Lambda_1^{2g}, \lambda', z')}
	\arrow["\cong"',"\alpha", from=1-1, to=1-3]
\end{tikzcd}\]
where, for ease of notation, we are denoting by~\(z'\) both the unique element sent to~\([\wt{F}]\) under the isomorphism~\(H_2(\Sigma_d(F)') \to H_2(\Sigma_d(F)', \partial \Sigma_d(F)')\), and the chosen element under~\(\alpha\) such that~\(\alpha(z) = z'\oplus 0\).

Let~\(x':=[F'] \in H_2(N',\partial N')\) and suppose~\(i_*'\colon H_2(N')\to H_2(N',\partial N')\) is the map induced by the inclusion. Condition~\((3)\) in the hypothesis of the theorem implies that there is a pointed isometry
\begin{equation}\label{eq: beta0' splitting}
    \begin{tikzcd}
	{(H_2(N'), Q_{N'}, (i'_*)^{-1}(x'))} &&& {(H_2(\Sigma_d(F)')_0, (\lambda_{\Sigma_d(F)'})_0, z').}
	\arrow["\cong"',"{(\beta_0')^{-1}}", from=1-1, to=1-4]
\end{tikzcd}
\end{equation}
Now by using the diagram \eqref{eq: The Splitting Theorem} supplied by the splitting theorem, we get the following isometries.
\begin{equation}\label{eq: suppliedbysplitting}
    \begin{tikzcd}
	{(H_2(\Sigma_d(F)')_0, (\lambda_{\Sigma_d(F)'})_0, z')} && {(\Z^{b_2(N)},\lambda_0',z_0')} && {(H_2(N),Q_N,i_*^{-1}(x))}
	\arrow["\cong"',"\alpha_0", from=1-1, to=1-3]
	\arrow["\cong"',"\beta_0'", from=1-3, to=1-5]
\end{tikzcd}
\end{equation}
Thus by combining \eqref{eq: beta0' splitting} and \eqref{eq: suppliedbysplitting}, we get an isometry of intersection forms~\(H_2(N') \cong H_2(N)\) which sends~\((i'_*)^{-1}(x')\) to~\(i_*^{-1}(x)\), and hence~\([F]= x'\) to~\(x\). Since~\(\partial N' = \partial N \cong S^3\) and~\(\ks(N')=\ks(N)\), we can use Freedman and Quinn
(\cite[Theorem~1.5]{Freedman},~\cite[Chapter~10]{FreedmanQuinn}) to get a homeomorphism~\(N'\) to~\(N\) realising this isometry. The resulting image of~\(F\) in~\(N\) is a surface of genus~\(g\), with boundary~\(K\), and representing the class~\(x\in H_2(N,\partial N)\).
\end{proof}

The rest of the paper is now devoted to proving Proposition~\ref{prop: suff} whose conditions, based off \cite[Theorem~\(3.1\)]{LWGenus}, are similar to those in \cite[Proposition~\(3.11\)]{ACAB}. The added complication of genus meaning that we now must consider the~\(p\)-adic condition and the fact that~\(H_2\) is no longer a free~\(\Lambda\)-module. As in \cite{LWGenus}, the verification that these conditions hold in our case will often refer to the genus zero results. Our main contribution comes in verifying conditions~\((1)\) and~\((4)\).

\section{Splitting of the Branched Cover}\label{sec: branchsplit}

Here we prove the following stronger result than the first condition of Proposition~\ref{prop: suff}. Recall that~\(F\) is a surface of genus~\(g\) embedded in~\(N_k =N\#^k S^2\times S^2\) with boundary~\(K \subset \partial N \cong S^3\), representing a class~\(x\oplus 0 \in H_2(N_k, \partial N_k)\) of divisibility~\(d>0\). The resulting~\(d\)-fold branched cover~\(\Sigma_d(F)\) is simply connected.

\begin{proposition}\label{prop: BranchedSplitting}
    The~\(\Lambda\)-module~\(H_2(\Sigma_d(F))\) is stably isomorphic to a free~\(\Lambda_1\)-module of rank~\(2g\). That is, there exists a~\(\Lambda\)-module isomorphism \[H_2(\Sigma_d(F)) \oplus \Lambda^r \cong \Lambda_1^{2g} \oplus \Lambda^{s+1}\] such that~\(b_2(N) +2k+r =s+1\) for some~\(r,s\), where~\(k\) is the number of stabilisations in~\(N_k\). 
\end{proposition}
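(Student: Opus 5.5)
We will compute the stable class of \(H_2(\Sigma_d(F))\) from an equivariant cellular chain complex and an Euler characteristic count. The group is \(\Z_d\); write \(X:=X_F\) and \(\widetilde X:=X_F^d\).

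\textbf{Step 1: a chain complex for the branched cover.} Triangulate \(N_k\) so that \(F\) is a subcomplex, and lift to a \(\Z_d\)-CW structure on \(\Sigma_d(F)\). Cells away from \(\widetilde F\) are free orbits, contributing copies of \(\Lambda\). Cells of \(\widetilde F\) are fixed, contributing copies of \(\Lambda_0=\Z\). Equivalently, use the decomposition \(\Sigma_d(F)=\widetilde X\cup \nu\widetilde F\) already exploited in the Mayer--Vietoris sequence above. The cellular chain complex \(C_*(\widetilde X)\) consists of free \(\Lambda\)-modules. Since \(\widetilde X\) is the universal cover, \(H_1(\widetilde X)=0\). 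The boundary \(\partial N_k\) gives \(\Sigma_d(K)\) with vanishing homology in degrees \(1,2\). The group \(H_2(\Sigma_d(F))\) then sits in
\[0\to \Z^{2g}\to H_2(\widetilde X)\to H_2(\Sigma_d(F))\to \Z\to 0,\]
with \(\Z_d\) acting trivially on the two outer terms, since the circle bundle is trivial and \(\widetilde F\) is fixed.

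\textbf{Step 2: \(H_2(\Sigma_d(F))\) has projective dimension at most one, with the expected \(\widehat H\)-cohomology.} Since \(\Sigma_d(F)\) is simply connected with boundary a \(\Z\)-homology sphere, \(H_3=H_1=0\) and \(H_4=0\). Use the relative chain complex \(C_*(\Sigma_d(F),\widetilde F)\), which is free over \(\Lambda\). Its homology is \(\Z\) in degree \(0\) and \(H_2(\Sigma_d(F),\widetilde F)\) in degree \(2\), together with \(H_1(\widetilde F)\) in degree \(2\) via the long exact sequence. The plan is to show that \(H_2(\Sigma_d(F))\) is \(\Z\)-free and that its Tate cohomology satisfies \(\widehat H^*(\Z_d;H_2)\cong \widehat H^*(\Z_d;\Lambda_1^{2g})\). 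We get the latter from the Smith-theory/transfer long exact sequences, using that the fixed set is \(\widetilde F\), of genus \(g\), with \(H_1(\widetilde F)=\Z^{2g}\). The key fact is that \(\Lambda_1\) is \(\Z\)-free with \(\widehat H^{odd}(\Z_d;\Lambda_1)\cong\Z_d\) and \(\widehat H^{even}(\Z_d;\Lambda_1)=0\). The \(2g\) odd-dimensional fixed classes \(H_1(\widetilde F)\) should produce exactly \(2g\) copies of this. Then apply the standard lattice argument of Lee--Wilczy\'nski (as in \cite{LWGenus}): a \(\Z\)-free \(\Lambda\)-module whose cohomology pattern matches that of \(\Lambda_1^{2g}\), and which is realised by a map \(\Lambda_1^{2g}\to\) (the module) inducing Tate isomorphisms, is stably isomorphic to \(\Lambda_1^{2g}\) modulo free modules. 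We build this map via the Mayer--Vietoris boundary identification with \(H_1(\widetilde F\times S^1)\), then use Schanuel's lemma on the resulting short exact sequences to obtain \(H_2(\Sigma_d(F))\oplus\Lambda^r\cong \Lambda_1^{2g}\oplus P\) with \(P\) projective. Finally, we show that \(P\) is stably free by comparing the Euler characteristic of the free complex \(C_*(\widetilde X)\) with the Wall finiteness/Swan obstruction; this vanishes because the complex comes from a genuine finite free \(\Lambda\)-CW complex.

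\textbf{Step 3: rank count.} Tensor with \(\mathbb Q\) and compare multiplicities of the trivial and nontrivial characters. The rational rank of \(H_2(\Sigma_d(F))\) is \(d(b_2(N)+2k)+2g(d-1)\), by the multiplicativity of the Euler characteristic for branched covers,
\[\chi(\Sigma_d(F))=d\chi(N_k)-(d-1)\chi(F)=d(b_2(N)+2k+1)-(d-1)(1-2g).\]
The trivial-character part is \(H_2(\Sigma_d(F);\mathbb Q)^{\Z_d}\cong H_2(N_k;\mathbb Q)\), of rank \(b_2(N)+2k\). Matching these with \(\Lambda_1^{2g}\oplus\Lambda^{s+1}\) and \(\Lambda^r\) gives \(s+1=b_2(N)+2k+r\).

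The main obstacle is Step 2, specifically showing that the nonfree part is exactly \(\Lambda_1^{2g}\) rather than some other non-free lattice with the same rational type, such as a twisted \(\Lambda_1\)-module or an \(I\)-type module. To handle this, I would construct the comparison map explicitly from \(H_1(\widetilde F)\) using the connecting homomorphism, and check that it is a Tate cohomology isomorphism.
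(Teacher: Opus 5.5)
Your Step~2 carries the entire content of the proposition, but it is not carried out, and the sketch contains errors.

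First, the heading is false. For \(d\geq 2\) and \(g>0\), a module stably isomorphic to \(\Lambda_1^{2g}\oplus\Lambda^{s+1}\) has infinite projective dimension. Modules of finite projective dimension over \(\Z[\Z_d]\) have vanishing Tate cohomology, while, as you note yourself, \(\widehat H^{\mathrm{odd}}(\Z_d;\Lambda_1)\cong\Z_d\).

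Second, the relative complex \(C_*(\Sigma_d(F),\wt F)\) does not have \(\Z\) in degree~\(0\). Since \(\Sigma_d(F)\) is simply connected with \(H_3=H_4=0\), and \(\wt F\) is a connected surface with boundary, the long exact sequence of the pair gives \(H_i(\Sigma_d(F),\wt F)=0\) for all \(i\neq 2\).

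Third, the comparison map \(\Lambda_1^{2g}\to H_2(\Sigma_d(F))\) cannot come from the Mayer--Vietoris boundary. That map goes \emph{out of} \(H_2(\Sigma_d(F))\). Its image in \(H_1(\wt F\times S^1)\) is only the \(\Z\) spanned by the fibre, as computed in the proof of Theorem~\ref{thm: main}, so it never sees \(H_1(\wt F)\). What remains (``the \(2g\) odd-dimensional fixed classes should produce exactly \(2g\) copies'', plus an unspecified Tate-isomorphism check) is the statement itself, which you rightly flag as unresolved.

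The missing idea is that the free complex \(C_*(\Sigma_d(F),\wt F)\) is acyclic outside degree~\(2\), and this already identifies \(P:=H_2(\Sigma_d(F),\wt F)\).
\begin{enumerate}
\item Since \(H_0=H_1=0\), the module \(\ker\partial_2\) is projective. Since \(H_3=H_4=0\), there is a free resolution \(0\to C_4\to C_3\to\im\partial_3\to0\).
\item \(P\) is \(\Z\)-free, being an extension of \(H_1(\wt F)\cong\Z^{2g}\) by the free group \(H_2(\Sigma_d(F))\). Hence \(\Ext^i_\Lambda(P,\Lambda)\cong\Ext^i_\Z(P,\Z)=0\) for \(i\geq1\), so \(\Ext^1_\Lambda(P,\im\partial_3)=0\).
\item Therefore \(0\to\im\partial_3\to\ker\partial_2\to P\to0\) splits, so \(P\) is projective. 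It is stably free because \([P]=\sum_i(-1)^i[C_i]\) in \(K_0(\Lambda)\).
\item Compare \(0\to H_2(\Sigma_d(F))\to P\to H_1(\wt F)\cong\Z^{2g}\to0\) (trivial action, as \(\wt F\) is fixed) with \(0\to I^{2g}\to\Lambda^{2g}\to\Z^{2g}\to0\) via Schanuel's lemma.
\item For cyclic groups, \(I\) is generated by one element with annihilator \((\mathcal{N})\), so \(I\cong\Lambda_1\). This gives \(H_2(\Sigma_d(F))\oplus\Lambda^{2g}\cong\Lambda_1^{2g}\oplus P\), with no Tate cohomology or lattice classification needed.
\end{enumerate}
Your Step~3 then fixes the ranks correctly. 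One technical point: in the topological category you cannot triangulate \(N_k\) with \(F\) as a subcomplex. Use excision to \(C_*(X_F^d,\wt F\times S^1)\) and finite CW models instead.

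Repaired this way, your route is a genuinely different, purely algebraic proof. The paper instead builds the model surface \(D\natural^g T\), computes \(H_2(\Sigma_d(T))\cong\Lambda_1^2\) for the unknotted torus, and transfers the answer to \(F\) via the stable uniqueness Theorem~\ref{thm: StableUnique}. The algebraic argument needs neither the genus-zero existence input nor the concordance and handle-trading machinery behind stable uniqueness.
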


Notice that we can increase the number of stabilisations~\(k\) arbitrarily, so we are able to take~\(r=0\) in the above result, as in \cite[Lemma~\(2.1\)]{LWGenus}. This, combined with the fact that free modules are projective, means that Proposition~\ref{prop: BranchedSplitting} implies condition~\((1)\) of Proposition~\ref{prop: suff}. 

As in the proof of~\cite[Lemma~\(4.3\)]{LWGenus}, throughout this section we can assume that the complements~\(X_F\) and~\(X_F^d\) are non-spin. Indeed, if they were not, we could take the connected sum with~\(\C P^2\) to make them non-spin, which has the result of adding a~\(\Lambda\) summand to the second homology of the branched cover. That is, changing~\(r\) to~\(r+1\) in the result, which we have mentioned that we can disregard anyway. Since the complement is non-spin, we can therefore assume that the class~\(x\) is ordinary.

We prove the above result in the same manner as \cite[Lemma~\(2.1\)]{LWGenus}. Namely, we calculate the simple case of an unknotted torus in~\(B^4\), connect sum these unknotted tori with a disc (whose existence and splitting of second homology is guaranteed by the genus zero results) to construct a genus~\(g\) surface with the correct splitting. Then we show that~\(F\) is stably equivalent to the constructed surface using the uniqueness result Theorem~\ref{thm: StableUnique}, which we prove in Section~\ref{sec: Unique}.

\begin{proposition}
    Let~\(T\) denote the unknotted torus embedded in~\(B^4\) with boundary the unknot~\(U\) in~\(\partial B^4 = S^3\). As a~\(\Lambda\)-module, we have that~\(H_2(\Sigma_d(T); \Z)\cong \Lambda_1\oplus \Lambda_1\).
\end{proposition}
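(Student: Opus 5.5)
The plan is to realise \(T\) as the pushed-in genus-one Seifert surface of the unknot and to compute \(\Sigma_d(T)\) by the Akbulut--Kirby cut-and-paste description of branched covers over pushed-in surfaces. Concretely, take \(S\subset S^3\) to be the boundary connected sum of a disc bounded by \(U\) with a standard unknotted torus minus a disc, and push its interior into \(B^4\). The trace of the push gives a copy \(S\times I\subset B^4\) meeting \(T\) in \(S\times\{1\}\). Cutting \(B^4\) along \(S\times I\) yields a ball \(B\) whose boundary contains two copies \(V_\pm\) of the genus-two handlebody \(S\times I\). Then \(\Sigma_d(T)\) is obtained from \(d\) copies \(B_0,\dots,B_{d-1}\) by gluing \(V_+\subset B_i\) to \(V_-\subset B_{i+1}\), indices mod \(d\). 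All the gluing regions are identified along the common branch set \(\widetilde{T}\cong T\). The deck generator \(t\) acts by \(B_i\mapsto B_{i+1}\).

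The homology computation is a Mayer--Vietoris argument for this decomposition. To handle the fact that the gluing regions all meet along \(\widetilde T\), I would first collapse along the branch set: thicken \(\widetilde T\) and note that \(\Sigma_d(T)\) deformation retracts onto \(\bigcup_i B_i\) glued along the \(V_i\). Equivalently, I would compute relative to \(\nu\widetilde T\) and use the Mayer--Vietoris sequence displayed in the proof of Theorem~\ref{thm: main} to pass back. The cleaner route is to glue \(\nu \widetilde T\) in as an extra piece: \(H_*(B_i)\) vanishes in positive degrees, \(H_1(V_i)\cong H_1(S)\cong\Z^2\), and all the \(V_i\) contain \(S\times\{1\}=\widetilde T\). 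The sequence then reduces to an exact sequence of \(\Z[\Z_d]\)-modules
\[0\to H_2(\Sigma_d(T))\to \bigoplus_{i\in\Z_d} H_1(V_i)\xrightarrow{\ \delta\ } H_1(\widetilde T)\to 0 .\]
Here \(\delta\) is the sum of the inclusion maps \(H_1(V_i)\cong H_1(S)\to H_1(\widetilde T)\), which are isomorphisms. Simple connectivity of \(\Sigma_d(T)\), proved earlier in Section~\ref{sec: Prelim}, guarantees that the sequence terminates as claimed. Equivariantly, \(\bigoplus_i H_1(V_i)\cong \Lambda\otimes_\Z H_1(S)\cong\Lambda^2\), with \(t\) permuting the summands. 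The map \(\delta\) is \(\aug\otimes\id\), since \(t\) acts trivially on \(\widetilde T\).

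It follows that \(H_2(\Sigma_d(T))\cong I\otimes_\Z \Z^2 \cong I^2\), where \(I=\ker(\aug)\) is the augmentation ideal. Finally, multiplication by \(1-t\) gives a \(\Lambda\)-module isomorphism \(\Lambda/(\mathcal N)\xrightarrow{\cong} I\). This map is well defined since \((1-t)\mathcal N=0\). It is surjective since \(I\) is generated by \(1-t\) as an ideal, because \(1-t^j=(1-t)(1+\dots+t^{j-1})\). It is injective because the annihilator of \(1-t\) in \(\Lambda\) is exactly \(\Lambda^{\Z_d}=\Z\mathcal N\). Hence \(H_2(\Sigma_d(T))\cong\Lambda_1\oplus\Lambda_1\). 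As a sanity check, the rank of \(H_2\) is \(2(d-1)\), matching the Euler characteristic count \(\chi(\Sigma_d(T))=d\chi(B^4)-(d-1)\chi(T)=2d-1\).

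The main obstacle I anticipate is making the Mayer--Vietoris step rigorous equivariantly: the pieces \(V_i\) are not disjoint, since they share \(\widetilde T\), so one must choose the open cover carefully and identify \(\delta\) precisely as \(\aug\otimes\id\) rather than, say, a difference map \(1-t\). Getting this wrong would produce \(\Lambda^2\) modulo a different submodule. I would first try the cover \(U=\bigsqcup_i \mathrm{int}(B_i)\) together with a small neighbourhood of \(\bigcup_i V_i\). The latter deformation retracts onto a "book" of \(d\) pages \(S\times I\) bound along \(S\times\{1\}\), whose \(H_1\) is \(H_1(S)\) and whose \(H_2\) vanishes. As a fallback, I would compute \(H_2(X_T^d)\) from an explicit handle decomposition of the unknotted torus complement, which has \(\pi_1\cong\Z\). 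I would then apply the sequence \(0\to\Z^{2}\to H_2(X^d_T)\to H_2(\Sigma_d(T))\to\Z\to 0\) from Section~\ref{sec: Suff}, checking the result against the genus-zero computation in \cite{ACAB}.
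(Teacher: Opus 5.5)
Your proposal is correct, but it reaches the result by a different route than the paper.

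The paper never uses a geometric model of $T$. It runs Mayer--Vietoris for $\Sigma_d(T)=X_T^d\cup(\widetilde T\times D^2)$ to get $0\to\Z^2\to H_2(X_T^d)\to H_2(\Sigma_d(T))\to 0$. It then computes $H_2(X_T^d)\cong\Lambda^2$ with the universal coefficient spectral sequence, taking $H_2(X_T;\Z[\Z])\cong\Z[\Z]^2$ from Conway--Powell and using $H_1(X_T^d)\cong\Z$ to kill the only possible differential. Finally it identifies the quotient by the trivial submodule $\Z^2$ with $\Lambda_1^2$.

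You instead model $T$ as a pushed-in Seifert surface and use the Akbulut--Kirby cut-and-paste description. There $\Sigma_d(T)$ is $d$ contractible pieces glued along pages that all retract to $\widetilde T$. This gives $H_2(\Sigma_d(T))=\ker(\aug\otimes\id)=I\otimes H_1(S)$ directly, and you finish with the explicit isomorphism $\Lambda_1\xrightarrow{\cdot(1-t)}I$.

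Your route is more elementary: no spectral sequence and no input from Conway--Powell. It also pins down the $\Lambda$-module structure directly, rather than as a quotient of $\Lambda^2$ by a submodule that must then be identified with $\mathcal N\Lambda^2$. It works verbatim for any pushed-in genus-$g$ Seifert surface of any knot, giving $\Lambda_1^{2g}$. The paper's route needs only $\pi_1(X_T)\cong\Z$, so it does not depend on a choice of geometric model for the unknotted torus. It also fits the $X_F^d\cup\nu\widetilde F$ Mayer--Vietoris framework used throughout the paper.

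Two small corrections to your write-up. First, take your first cover, $U=\bigsqcup_i\mathrm{int}(B_i)$ and $W$ a neighbourhood of $\bigcup_i V_i$. The intersection $U\cap W$ has one component for each sector $B_i$, not one for each page; that component is a collar of the two cut faces $V_-\cup_{\widetilde T}V_+$ of $B_i$, homotopy equivalent to $S$. Since $\Z_d$ permutes both sectors and pages simply transitively, the middle term is still $\Lambda\otimes H_1(S)$ and $\delta$ is still $\aug\otimes\id$. So that cover already works, and the obstacle you anticipated does not arise. Second, you do not need to import simple connectivity, and Section~\ref{sec: Prelim} treats $F\subset N_k$, not $T$. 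The map $\delta$ is visibly surjective, and the same sequence shows $H_1(\Sigma_d(T))=0$.
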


\begin{proof}
    Let~\(X_T :=B^4\backslash \nu T\). Since~\(T\) is unknotted, we get that~\(\pi_1(X_T)\cong \Z\), and thus we can choose a map~\(\pi_1(X_T) \to \Z_d\) which sends surface meridian to the generator of~\(\Z_d\). Thus we get a~\(d\)-fold branched cover~\(\Sigma_d(T)\). By a Seifert--Van-Kampen argument we see that \[\pi_1(\Sigma_d(T)) = \pi_1(X_T^d) *_{\pi_1(\widetilde{T}\times S^1)}\pi_1(\widetilde{T}) = \Z *_{\pi_1(\widetilde{T}) \times\Z} \pi_1(\widetilde{T}) = \{1\}.\] Hence~\(\Sigma_d(T)\) is simply connected. Now consider the Mayer-Vietoris sequence associated to~\(\Sigma_d(T) = X_T^d \cup_{\widetilde{T}\times S^1} \widetilde{T}\times D^2\), where~\(\widetilde{T}\) is the preimage of~\(T\) under the covering map. This gives the following exact sequence of~\(\Lambda\)-modules.
   \[\begin{tikzcd}[column sep=small]
	0 & {H_2(\widetilde{T}\times S^1)} & {H_2(X_T^d)} & {H_2(\Sigma_d(T))} & {H_1(\widetilde{T}\times S^1)} & {H_1(X^d_T) \oplus H_1(\widetilde{T}\times D^2)} & {0.}
	\arrow[from=1-1, to=1-2]
	\arrow[from=1-2, to=1-3]
	\arrow[from=1-3, to=1-4]
	\arrow[from=1-4, to=1-5]
	\arrow[from=1-5, to=1-6]
	\arrow[from=1-6, to=1-7]
\end{tikzcd}\]
    We have that~\(H_i(\widetilde{T}\times S^1)\) and~\(H_i(\widetilde{T}\times D^2)\) are products of trivial~\(\Lambda\)-modules, which we denote by~\(\Z\). In particular~\(H_2(\widetilde{T}\times S^1) \cong \Z^2\),~\(H_1(\widetilde{T}\times S^1) \cong \Z^3\), and~\(H_1(\widetilde{T}\times D^2) \cong \Z^2\). The right hand side of the Mayer--Vietoris sequence then implies that~\(\Z_d\) acts on~\(H_1(X^d_T)\) trivially as well, and so~\(H_1(X^d_T)\cong \Z\). This allows us to simplify the above sequence to \[\begin{tikzcd}
	0 & {\Z^2} & {H_2(X_T^d)} & {H_2(\Sigma_d(T))} & 0
	\arrow[from=1-1, to=1-2]
	\arrow[from=1-2, to=1-3]
	\arrow[from=1-3, to=1-4]
	\arrow[from=1-4, to=1-5]
\end{tikzcd}\] 
    We now compute~\(H_2(X_T^d)\) as a~\(\Lambda\)-module using the universal coefficient spectral sequence \[E_{p,q}^2 = \text{Tor}_q^{\Z[\Z]}(H_p(X_T;\Z[\Z]),\Lambda) \implies H_{p+q}(X_T;\Lambda) \cong H_{p+q}(X_T^d).\] Since~\(\pi_1(X_T)\cong \Z\), we can appeal to the calculations of \cite[Lemma~\(3.2\)]{ConwayPowell} to find that \[H_i(X_T;\Z[\Z]) \cong \begin{cases}
        \Z & i=0 \\
        \Z[\Z]\oplus \Z[\Z] & i=2 \\
        0 & \text{else}
    \end{cases}.\]
    Using this, we see that the only values of~\((p,q)\) such that~\(E_{p,q}^2\neq 0\) in the spectral sequence are~\((0,0),(0,1)\), and~\((2,0)\). The only potentially nonzero differential in the~\(E^2\) page is~\(d_{2,0}\). We have that~\(E^2_{0,1}\cong\text{Tor}_1^{\Z[\Z]}(\Z,\Lambda) \cong \Z \), and
    \[E^2_{2,0} \cong \text{Tor}_0^{\Z[\Z]}(\Z[\Z]^2,\Lambda) \cong \Z[\Z]^2\otimes_{\Z[\Z]} \Lambda = \Lambda^2. \]
    It follows that~\(E^3_{2,0} \cong \ker(d_{2,0})\),~\(E^3_{0,1}\cong \Z/ \im(d_{2,0})\), and also that~\(E^3 \cong E^\infty\). Thus, since~\(E^3_{0,2}= E^3_{1,1}= E^3_{1,0}=0\), we find that~\(H_1(X_T^d) \cong \Z/ \im(d_{2,0})\) and~\(H_2(X_T^d) \cong \ker(d_{2,0})\). We have seen above that~\(H_1(X_T^d) \cong \Z\), so~\(d_{2,0}\) must be the zero map, and~\(H_2(X_T^d) \cong \Lambda \oplus \Lambda\). In particular from the short exact sequence above we get that \[H_2(\Sigma_d(T))\cong \Lambda_1\oplus \Lambda_1\]
    since the trivial~\(\Lambda\)-module~\(\Z\) can be written as~\((\mathcal{N}) \subseteq \Lambda\).
\end{proof}

Now, since~\(x\) is ordinary, it can be represented stably as a disc~\(D\), as in \cite[Theorem~\(1.10\), Proposition~\(4.10\)]{ACAB}, such that~\(H_2(\Sigma_d(D)) \cong \Lambda^{b_2+2k'}\) for some~\(k'\) which is bigger than the original~\(k\).

\begin{construction}
    We describe how to perform an internal and external boundary connected sum operation to get a standardly embedded surface of genus~\(g\) with the correct homological splitting.
    As above, suppose we have an unknotted torus~\(T\) embedded in~\(B^4\) with boundary the unknot in~\(\partial B^4 = S^3\). We also have a disc~\(D\) representing~\(x\) embedded in~\(N_{k'}\) with boundary~\(K\) in~\(\partial N_{k'} = S^3\). To construct the standardly embedded surface of genus~\(g\) representing~\(x\), we perform two boundary connected sum operations. First, we take the external sum~\(N_{k'} \ \natural \ B^4\) in the usual way, and then take the internal sum~\(D \ \natural\ T\) such that we specify a small unknotted arcs of~\(\partial D = K\) and~\(\partial T = U\), remove them, and connect them in the usual knot sum operation. This entire operation is described in the schematic picture in Figure~\ref{fig: connectsum}.

    \begin{figure}[h]
    \def\svgscale{0.00001em}
    \def\svgwidth{10cm}
    \graphicspath{ {./images/} }
    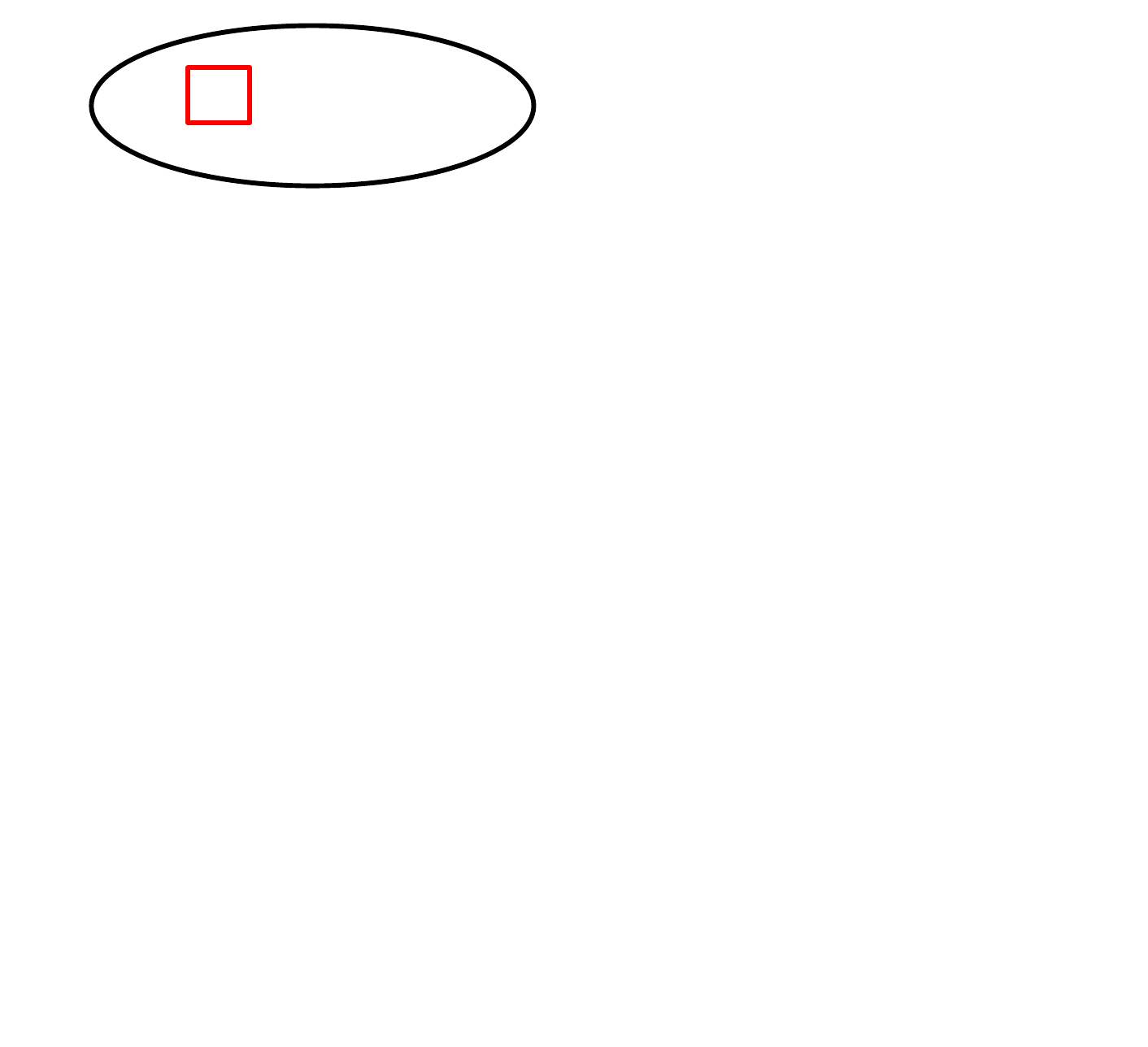
    \caption{An internal and external connected sum operation}
    \label{fig: connectsum}
    \end{figure}    

    Thus we get the manifold~\(N_{k'} \ \natural \ B^4 \cong N_{k'}\) and an embedding~\(D \ \natural \ T\hookrightarrow N_{k'}\). Notice from Figure~\ref{fig: connectsum} that the operation results in~\((N_K\backslash D^4) \sqcup (B^4\backslash D^4)\) along with a `bridge' between the punctured manifolds, given by~\(B^3\times I\). The part of~\(D \ \natural \ T\) inside~\(B^3\times I\) is two unknotted arcs on the boundary, and a small cap in the interior, which we denote~\(X\).
    This is analogous to a slice disc of the unknot, with two arcs removed. In the same way, we can see that~\(\pi_1(B^3\times I\backslash X) \cong \Z\). Thus \[\pi_1(N_{k'}\backslash D \ \natural \ T) \cong \pi_1(N_{k'}\backslash D) *_{\pi_1(B^3\times I\backslash X)} \pi_1(B^4\backslash T) \cong \Z_d *_{\Z} \Z \cong \Z_d. \] Therefore we can take the~\(d\)-fold branched cover of~\(N_{k'}\) along~\(D \ \natural \ T\), and denote it~\(\Sigma_d(D \ \natural \ T)\). It is straightforward to see that once we choose compatible homomorphisms~\(\pi_1(B^3\times I\backslash X) \cong  \Z \to \Z_d\) and~\(\pi_1(B^4\backslash T)\cong \Z\to \Z_d\), we have that~\(\Sigma_d(D \ \natural \ T) \cong \Sigma_d(D) \ \natural \ \Sigma_d(T)\). This is because the induced~\(d\)-fold branched cover along~\(X\) is again~\(B^3 \times I\).

    By Mayer--Vietoris, we have that~\(H_2(\Sigma_d(D \ \natural \ T)) \cong H_2(\Sigma_d(D)) \oplus H_2(\Sigma_d(T)) \cong \Lambda^{b_2+2{k'}}\oplus \Lambda_1^2\). Perform this operation~\(g\) many times to obtain a standardly embedded surface. This surface represents~\(x\) since~\([D\ {\natural}^g \ T] = [D] + g[T] = [D] = x \in H_2(N_{k'},\partial N_{k'})\).
\end{construction}

With this construction we can complete the proof of Proposition~\ref{prop: BranchedSplitting}.

\begin{proof}[Proof of Proposition~\ref{prop: BranchedSplitting}]
    Given the surface~\(F\) of genus~\(g\) embedded in~\(N_k\), we use the above construction to build the surface~\(D\ {\natural}^g \ T\), embedded in~\(N_{k'}\). Add external stabilisations so that~\(F\) is embedded in~\(N_{k'}\).
    We have that~\([F]= [D\ {\natural}^g \ T]\),~\(\partial F = \partial (D\ {\natural}^g \ T) = K \), and the knot groups of the surfaces agree. Thus by Theorem~\ref{thm: StableUnique} we get a homeomorphism of pairs~\((N_{k'+n}, F) \to (N_{k'+n}, D\ {\natural}^g \ T)\) for some~\(r\geq 0\). By construction, this gives a stable homeomorphism of the associated branched covers, built from the homeomorphisms of the complements and the tubular neighbourhoods of the surfaces. Thus we get that \[H_2(\Sigma_d(F)) \oplus \Lambda^{2n} \cong H_2(\Sigma_d(D\ {\natural}^g \ T)) \oplus \Lambda^{2n}\cong \Lambda_1^{2g} \oplus \Lambda^{b_2(N)+2k'+2n}.\]
    By taking~\(r=2n\) and~\(s= b_2(N)+2k'+2n-1\), we have proven the proposition.
\end{proof}

\section{Splitting Over~\(\Z\)}\label{sec: Zsplit}

In this section we prove the second condition of Proposition~\ref{prop: suff}. Recall that~\(F\) is a surface of genus~\(g\) embedded in~\(N_k=N\#^k S^2\times S^2\), representing~\(x\oplus 0 \in H_2(N_k, \partial N_k)\) of divisibility~\(d\). Recall that the map~\(i_*\colon H_2(N_k)\to H_2(N_k,\partial N_k)\) is an isomorphism, and that for a~\(\Lambda = \Z[\Z_d]\)-module~\(P\) we denote the associated~\(\Lambda/(I)\cong \Z\)-module by~\(P_0\), given by the map~\(\pi_0\) as in Proposition~\ref{prop: SpecificRim}. We verify the following.

\begin{proposition}\label{prop: Z-split}
    The branched covering projection~\(p\colon \Sigma_d(F)\to N_k\) induces an isometry \[\beta_0\colon (H_2(\Sigma_d(F))_0, \lambda_0,z_0)\to (H_2(N_k),Q_{N_k}, i_*^{-1}(x)\oplus 0)\] that satisfies~\(\beta_0\circ \pi_0 = p_*\).

    In particular, there exists an orthogonal splitting of pointed hermitian modules \[\beta_0\colon (H_2(\Sigma_d(F))_0, \lambda_0,z_0)\to (H_2(N_k),Q_{N_k}, x) \oplus H(\Z)^{\oplus k}\] which only depends on the branched cover~\(p\) and the connected sum decomposition of~\(N_k\).
\end{proposition}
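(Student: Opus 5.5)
The proof will follow the genus zero argument of \cite[Section~3]{ACAB}, checking that the nonzero genus does not disturb it. First we show that the transfer map factors through \(\pi_0\). The branched covering projection \(p\colon \Sigma_d(F)\to N_k\) induces \(p_*\colon H_2(\Sigma_d(F))\to H_2(N_k)\), and \(p_*(ty)=p_*(y)\) for the deck generator \(t\). We claim that \(p_*\) kills \(T_\mathcal{N}H_2(\Sigma_d(F))\). To see this, let \(\tau\colon H_2(N_k)\to H_2(\Sigma_d(F))\) be the transfer. Then \(\tau\circ p_*=\mathcal{N}\cdot\) and \(p_*\circ\tau = d\cdot\). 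If \(\mathcal{N}y=0\), this gives \(d\,p_*(y)=p_*(\mathcal{N}y)=0\). Since \(H_2(N_k)\) is torsion free, \(p_*(y)=0\). Hence \(p_*\) descends to a homomorphism \(\beta_0\colon H_2(\Sigma_d(F))_0\to H_2(N_k)\) with \(\beta_0\circ\pi_0=p_*\).

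Next we check that \(\beta_0\) preserves the forms. Here \(\lambda_0(\pi_0 a,\pi_0 b)=\Pi_0\lambda(a,b)=\sum_{h\in\Z_d}Q_{\Sigma_d(F)}(a,hb)=Q_{\Sigma_d(F)}(a,\mathcal{N}b)=Q_{\Sigma_d(F)}(a,\tau p_*b)\). The transfer is adjoint to \(p_*\) under the intersection pairings, i.e. \(Q_{\Sigma_d(F)}(a,\tau c)=Q_{N_k}(p_*a,c)\), which one sees by pulling back transverse representatives. Therefore \(\lambda_0(\pi_0a,\pi_0b)=Q_{N_k}(p_*a,p_*b)\), so \(\beta_0\) is a morphism of forms.

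By Proposition~\ref{prop: SpecificRim} and condition (1), \(\lambda_0\) is nonsingular, and \(Q_{N_k}\) is nonsingular. A form-preserving map between nonsingular forms is injective. To conclude that \(\beta_0\) is an isometry it then suffices to compare ranks, using surjectivity or unimodularity. By Proposition~\ref{prop: BranchedSplitting} (with \(r=0\)), \(H_2(\Sigma_d(F))\cong\Lambda^{b_2(N)+2k}\oplus\Lambda_1^{2g}\). Since \(\Lambda_1\) is entirely \(\mathcal{N}\)-torsion, we get \(H_2(\Sigma_d(F))_0\cong\Z^{b_2(N)+2k}=\Z^{b_2(N_k)}\). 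An injective isometric embedding of a unimodular lattice into another unimodular lattice of the same rank has image of index \(1\) (compare discriminants), so \(\beta_0\) is an isometry.

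For the basepoint, note that \(p_*\) carries \([\wt F]\) to \([F]=x\oplus 0\) in relative homology; the branched set maps with degree one. Since \(\iota_*\) and \(i_*\) are isomorphisms compatible with \(p_*\), this gives \(\beta_0(z_0)=p_*(z)=i_*^{-1}(x)\oplus0\). The ``in particular'' statement follows by composing with the orthogonal decomposition \(H_2(N_k)=H_2(N)\oplus H_2(\#^kS^2\times S^2)\), which depends only on the connected sum decomposition.

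I expect the main obstacle to be the adjointness/transfer identity and the vanishing of \(p_*\) on \(\mathcal{N}\)-torsion in the branched (rather than honest) covering setting, together with the boundary bookkeeping. I would handle these as in \cite[Lemma~3.7 and Proposition~3.11]{ACAB}: define the transfer via chains on \(X_F\) extended over \(\nu F\), or work in the unbranched cover relative to the boundary.
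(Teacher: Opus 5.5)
Your proof is correct, but you establish bijectivity of $\beta_0$ by a different route from the paper.

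The paper first proves Lemma~\ref{lem: covering surjects}, that $p_*\colon H_2(\Sigma_d(F))\to H_2(N_k)$ is surjective. It does this by comparing the Mayer--Vietoris sequences of $\Sigma_d(F)=X_F^d\cup\nu\wt F$ and $N_k=X_F\cup\nu F$. The map $\proj_*\colon H_2(X_F^d)\to H_2(X_F)$ is onto by Hurewicz and $H_2(\Z_d)=0$. The induced map between the images of the connecting homomorphisms is $\Z\to d\Z$, $n\mapsto dn$, because $Q_{N_k}(p_*a,p_*z)=d\,Q_{\Sigma_d(F)}(a,z)$. Hence $\beta_0$ is onto, and injectivity then comes from form preservation.

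You skip that lemma. Instead you read off from Proposition~\ref{prop: BranchedSplitting} that $H_2(\Sigma_d(F))_0$ is free of rank $b_2(N_k)$, since $\Lambda_1$ is entirely $\mathcal{N}$-torsion. You take unimodularity of $\lambda_0$ from Proposition~\ref{prop: SpecificRim}, and finish with the index/discriminant comparison.

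\textbf{What each route buys.} Your route is shorter and gives surjectivity of $p_*$ as a corollary. However, it leans on the full strength of condition~(1), and hence on the stable uniqueness theorem behind Proposition~\ref{prop: BranchedSplitting}. It needs this both for the rank and for unimodularity of $\lambda_0$; unimodularity genuinely requires something like $H_2(\Sigma_d(F))^{\Z_d}=\mathcal{N}H_2(\Sigma_d(F))$. The paper's route only needs $\lambda_0$ to be nondegenerate, and that is automatic. Indeed $\lambda_0(\pi_0a,\pi_0b)=Q_{\Sigma_d(F)}(\mathcal{N}a,b)$ and $Q_{\Sigma_d(F)}$ is nonsingular, so any $a$ in the radical satisfies $\mathcal{N}a=0$. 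So the paper's argument could in fact be run independently of Section~\ref{sec: branchsplit}.

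\textbf{A small slip.} It is $p_*$, not the transfer, that factors through $\pi_0$. Also, $p_*(\mathcal{N}y)=d\,p_*(y)$ needs only $p\circ t=p$, not the transfer.
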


As noted in Section~\ref{sec: Suff}, the way we prove this fact is the same as the genus zero case \cite[Proposition~\(5.1\)]{ACAB}, which in turn was inspired by the closed case \cite[Lemma~\(2.6\)]{LWGenus}. However, since there are some subtleties which arise when considering surfaces of nonzero genus, we give the details for completeness.
We will need the following lemma.

\begin{lemma}\label{lem: covering surjects}
    The branched cover induced map~\(p_*\colon H_2(\Sigma_d(F)) \to H_2(N_k)\) is surjective.
\end{lemma}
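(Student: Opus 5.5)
We plan to prove surjectivity of \(p_*\colon H_2(\Sigma_d(F))\to H_2(N_k)\) with a transfer argument, combined with a separate check on a complement of the image of the transfer. The transfer alone only gives that \(p_*\circ \tau\) is multiplication by \(d\), so the real work is to control the cokernel away from the branch set.

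\textbf{Setup.} Decompose \(N_k = X_F\cup \nu F\) and \(\Sigma_d(F)=X_F^d\cup \nu\wt F\), where \(p\) restricts to the regular \(d\)-fold cover \(X_F^d\to X_F\) and to the branched cover \(\nu\wt F\to \nu F\). Write the Mayer--Vietoris sequences of both decompositions and the map between them induced by \(p\):
\[ H_2(X_F)\oplus H_2(\nu F)\to H_2(N_k)\to H_1(F\times S^1)\to H_1(X_F)\oplus H_1(\nu F), \]
together with the analogous upstairs sequence. Since \(\nu F\simeq F\) has \(H_2(\nu F)=0\) and \(F\) has boundary, a class of \(H_2(N_k)\) either comes from \(H_2(X_F)\) or has nonzero boundary in \(H_1(F\times S^1)\cong H_1(F)\oplus\Z\). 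Its boundary must die in \(H_1(X_F)\oplus H_1(F)\), so it lies in the meridian \(\Z\) summand and is a multiple of \(d\), since the meridian generates \(H_1(X_F)\cong\Z_d\). Moreover its value is \(Q(y,x)\) for \(y\in H_2(N_k)\).

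\textbf{Step 1: classes from the complement.} Show that \(\im(H_2(X_F)\to H_2(N_k))\) lies in \(\im p_*\). We use the Hurewicz map \(\pi_2(X_F)\to H_2(X_F)\), whose cokernel is \(H_2(\Z_d)=0\) for cyclic groups, so \(\pi_2(X_F)\to H_2(X_F)\) is surjective. Every such class is then represented by a map of a sphere, which lifts to \(X_F^d\) because \(S^2\) is simply connected. Pushing forward the lift recovers the class, so it lies in \(\im(p_*\circ j_*)\).

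\textbf{Step 2: classes meeting \(F\).} It remains to hit the part of \(H_2(N_k)\) detected by \(y\mapsto Q(y,x)\in d\Z\). Since \(x\) has divisibility \(d\), pick \(y_0\) with \(Q(y_0,x)=d\). Then \(H_2(N_k)\) is generated by \(y_0\) and by \(\ker Q(-,x)\), and the latter is the image of \(H_2(X_F)\) by exactness. It therefore suffices to hit \(y_0\). Take \(z=\iota_*^{-1}[\wt F]\). Because the branched cover restricted to \(\nu\wt F\to\nu F\) has degree one on the zero section, \(p_*[\wt F]=[F]=x\) in relative homology, and hence \(p_*(z)=i_*^{-1}(x)\). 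This alone need not equal \(y_0\).

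\textbf{Step 3: comparing boundaries.} Compare boundaries instead. The upstairs connecting map sends \(a\) to \(Q_{\Sigma}(a,z)\) in the fibre \(\Z\), and \(p\) maps the upstairs fibre circle to \(d\) times the downstairs meridian. So a class \(a\) upstairs with \(\partial a=1\) maps to a class downstairs with boundary \(d\). Such an \(a\) exists because \(Q_{\Sigma_d(F)}\) is nonsingular by the hypothesis \(H_1(\Sigma_d(K))=0\) and \(z\) is primitive: \(\wt F\) is a surface whose meridian generates the fibre, and \(\partial\) surjects by the Mayer--Vietoris computation in the proof of Theorem~\ref{thm: main}. Then \(p_*(a)-y_0\) has zero boundary, so it lies in the image of \(H_2(X_F)\) and is hit by Step 1. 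Hence \(y_0\in\im p_*\).

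\textbf{Main obstacle.} We expect the hardest step to be checking the compatibility of the connecting homomorphisms under \(p\), namely that the fibre circle maps with degree \(d\) to the meridian, while carefully tracking the \(H_1(F)\) components of \(H_1(F\times S^1)\). The trivialisation of \(\partial\nu\wt F\) must be chosen compatibly with that of \(\partial\nu F\). If this becomes delicate, the fallback is to reduce to the standard model \(D\natural^g T\) from Section~\ref{sec: branchsplit}, where the claim can be checked directly.
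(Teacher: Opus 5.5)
Your proof is correct and follows the paper's argument: the same Mayer--Vietoris ladder, the same Hurewicz argument (using $H_2(\Z_d)=0$) for classes coming from the complement, and an explicit version of the paper's Five Lemma chase. The one variation is how you show that the map between the images of the connecting homomorphisms is multiplication by $d$: you use that the meridian circle of the branch set maps with degree $d$, whereas the paper derives it from $Q_{N_k}(p_*a,p_*z)=\sum_{g}Q_{\Sigma_d(F)}(a,gz)=d\,Q_{\Sigma_d(F)}(a,z)$ and the $\Z_d$-invariance of $z$. This also means your worry about compatible trivialisations is moot, since both connecting maps land in the canonical meridian subgroup $\ker\bigl(H_1(\partial\nu F)\to H_1(\nu F)\bigr)$ (and its analogue upstairs).
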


\begin{proof}
    Consider the Mayer-Vietoris sequences associated to~\(\Sigma_d(F) = X_F^d \cup \nu \widetilde{F}\) and~\(N_k = X_F \cup \nu F\). Recall that~\(N_k\),~\(X_F^d\), and~\(\Sigma_d(F)\) are simply-connected.
    By naturality, we have the following commutative diagram of projection induced maps.
    \[\begin{tikzcd}[column sep=small]
    	0 & {H_2(\widetilde{F}\times S^1)} & {H_2(X_F^d)} & {H_2(\Sigma_d(F))} & {H_1(\widetilde{F}\times S^1)} & {H_1(\nu \widetilde{F})} & 0 \\
    	0 & {H_2(F\times S^1)} & {H_2(X_F)} & {H_2(N_k)} & {H_1(F\times S^1)} & {H_1(\nu \widetilde{F}) \oplus H_1(X_F)} & 0
    	\arrow[from=1-1, to=1-2]
    	\arrow[from=1-2, to=1-3]
    	\arrow["\cong"', from=1-2, to=2-2]
    	\arrow[from=1-3, to=1-4]
    	\arrow["{\proj_*}"', from=1-3, to=2-3]
    	\arrow["\widetilde{\partial}", from=1-4, to=1-5]
    	\arrow["{p_*}"', from=1-4, to=2-4]
    	\arrow[from=1-5, to=1-6]
    	\arrow[from=1-5, to=2-5]
    	\arrow[from=1-6, to=1-7]
    	\arrow[from=1-6, to=2-6]
    	\arrow[from=2-1, to=2-2]
    	\arrow[from=2-2, to=2-3]
    	\arrow[from=2-3, to=2-4]
    	\arrow["\partial", from=2-4, to=2-5]
    	\arrow[from=2-5, to=2-6]
    	\arrow[from=2-6, to=2-7]
    \end{tikzcd}\] This simplifies to 
    \[\begin{tikzcd}
    	0 & {\Z^{2g}} & {H_2(X_F^d)} & {H_2(\Sigma_d(F))} & {\Z^{2g+1}} & {\Z^{2g}} & 0 \\
    	0 & {\Z^{2g}} & {H_2(X_F)} & {H_2(N_k)} & {\Z^{2g+1}} & {\Z^{2g}\oplus \Z_d} & 0.
    	\arrow[from=1-1, to=1-2]
    	\arrow[from=1-2, to=1-3]
    	\arrow["\cong"', from=1-2, to=2-2]
    	\arrow[from=1-3, to=1-4]
    	\arrow["{\proj_*}"', from=1-3, to=2-3]
    	\arrow["{\widetilde{\partial}}", from=1-4, to=1-5]
    	\arrow["{p_*}"', from=1-4, to=2-4]
    	\arrow[from=1-5, to=1-6]
    	\arrow[from=1-5, to=2-5]
    	\arrow[from=1-6, to=1-7]
    	\arrow[from=1-6, to=2-6]
    	\arrow[from=2-1, to=2-2]
    	\arrow[from=2-2, to=2-3]
    	\arrow[from=2-3, to=2-4]
    	\arrow["\partial", from=2-4, to=2-5]
    	\arrow[from=2-5, to=2-6]
    	\arrow[from=2-6, to=2-7]
    \end{tikzcd}\]
    We truncate this diagram further to obtain the following.
    \begin{equation}\label{eq: FiveLemma}
        \begin{tikzcd}
    	0 & {\Z^{2g}} & {H_2(X_F^d)} & {H_2(\Sigma_d(F))} & {\im(\widetilde{\partial})} & 0 \\
    	0 & {\Z^{2g}} & {H_2(X_F)} & {H_2(N_k)} & {\im(\partial)} & 0
    	\arrow[from=1-1, to=1-2]
    	\arrow[from=1-2, to=1-3]
    	\arrow["\cong"', from=1-2, to=2-2]
    	\arrow[from=1-3, to=1-4]
    	\arrow["{\proj_*}"', from=1-3, to=2-3]
    	\arrow[from=1-4, to=1-5]
    	\arrow["{p_*}"', from=1-4, to=2-4]
    	\arrow[from=1-5, to=1-6]
    	\arrow["f"',from=1-5, to=2-5]
    	\arrow[from=2-1, to=2-2]
    	\arrow[from=2-2, to=2-3]
    	\arrow[from=2-3, to=2-4]
    	\arrow[from=2-4, to=2-5]
    	\arrow[from=2-5, to=2-6]
    \end{tikzcd}
    \end{equation}
    By the Five Lemma, if~\(\proj_*\) and~\(f\) are surjective, then~\(p_*\) is surjective. The map~\(\proj_*\) is surjective thanks to the following commutative square given by the Hurewicz map. \[\begin{tikzcd}
    	{\pi_2(X_F^d)} && {\pi_2(X_F)} \\
    	{H_2(X_F^d)} && {H_2(X_F)}
    	\arrow["{\proj_*}", from=1-1, to=1-3]
    	\arrow[from=1-1, to=2-1]
    	\arrow[from=1-3, to=2-3]
    	\arrow["{\proj_*}", from=2-1, to=2-3]
    \end{tikzcd}\] 
    The top projection map is an isomorphism since~\(X_F^d\to X_F\) is an unbranched covering space. The vertical map on the right is surjective since~\(H_2(\pi_1(X_F)) = H_2(\Z_d)=0\). Hence the projection map~\(\proj_*\) on homology is surjective.
    
    To show that~\(f\) is surjective, recall that in the proof of Proposition~\ref{prop: suff}, we found that~\(\im(\widetilde{\partial})\cong \Z\), given by~\(\widetilde{\partial}(a) = Q_{\Sigma_d(F)}(a, z)\) for all~\(a\in H_2(\Sigma_d(F))\). A similar argument shows that~\(\im(\partial)\cong d\Z\), given by~\(\partial(\alpha) = Q_{N_k}(\alpha, i^{-1}_*(x)\oplus 0)\) for all~\(\alpha\in H_2(N_k)\). To work out the image of~\(f\colon \Z \to d\Z\), we use commutativity of the diagram \eqref{eq: FiveLemma}. In particular, for~\(a\in H_2(\Sigma_d(F))\), we have that~\(f(\widetilde{\partial}(a)) = \partial(p_*(a))\). Thus \[f(Q_{\Sigma_d(F)}(a,z)) = Q_{N_k}(p_*(a),i^{-1}_*(x)\oplus 0).\] Recall that by definition~\(p_*(z) = i^{-1}_*(x)\oplus 0\), and by \cite[Proposition~\(A.4\)]{ACAB} we have that \[\sum_{g\in \Z_d} Q_{\Sigma_d(F)}(x,gy) = Q_{N_k}(p_*(x),p_*(y))\] for all~\(x,y \in H_2(\Sigma_d(F))\). Thus
    \begin{align*}
        f(Q_{\Sigma_d(F)}(a,z)) &= Q_{N_k}(p_*(a),i^{-1}_*(x)\oplus 0) \\
        &= Q_{N_k}(p_*(a), p_*(z)) \\
        &= \Sum_{g\in \Z_d} Q_{\Sigma_d(F)}(a,gz) \\
        &=\Sum_{g\in \Z_d} Q_{\Sigma_d(F)}(a,z) \\
        &= d Q_{\Sigma_d(F)}(a,z).
    \end{align*}
    Hence the image of~\(f\) is~\(d\Z\). That is,~\(f\) is surjective and so~\(p_*\) is surjective as required.
\end{proof}

With this lemma, the rest of the proof proceeds in the same way as the proof of \cite[Proposition~\(5.1\)]{ACAB}. For completeness and the convenience of the reader, we reproduce the argument here.

\begin{proof}[Proof of Proposition~\ref{prop: Z-split}]
    If~\(y\in T_\mathcal{N}\), then~\(d\cdot p_*(y) = p_*(\mathcal{N}y) = p_*(0) = 0\). However,~\(H_2(N_k)\) is torsion free, so the only~\(x\) such that~\(dx=0\) is~\(0\). Hence~\(dp_*(y)=0\) implies that~\(p_*(y)=0\). This proves that the map~\(p_*\) vanishes on~\(T_\mathcal{N}H_2(\Sigma_d(D)).\)
    Thus~\(p_*\) induces a map~\(\beta_0\colon H_2(\Sigma_d(F))_0\to H_2(N_k)\). By definition, we have that~\(\beta_0\circ \pi_0 = p_*\).

    Now we show that this induces an isometry of pointed hermitian forms \[\beta_0\colon (H_2(\Sigma_d(F))_0, \lambda_0,z_0)\to (H_2(N_k),Q_{N_k}, i_*^{-1}(x)\oplus 0).\] 
    First of all, by definition,~\(z_0 = \pi_0(z)\), so~\(\beta_0(z_0)= \beta_0(\pi_0(z)) = p_*(z) = i^{-1}_*(x)\oplus 0\). Thus~\(\beta_0\) preserves the distinguished points. To show that~\(\beta_0\) preserves hermitian forms, we must show that \[Q_{N_k}(\beta_0(u),\beta_0(u')) = \lambda_0(u,u') \] for all~\(u,u' \in H_2(\Sigma_d(F))_0.\) Since~\(\pi_0\) is surjective, there exists~\(y,y'\in H_2(\Sigma_d(F))\) such that~\(\pi_0(y)= u\) and~\(\pi_0(y')=u'\). Thus, using the fact that~\(\beta_0\circ \pi_0 = p_*\), we can rewrite the condition as \[Q_{N_k}(p_*(y),p_*(y'))=\lambda_0(\pi_0(y),\pi_0(y'))\] for all~\(y,y' \in H_2(\Sigma_d(F)).\)
    Use the augmentation-induced isomorphism~$\Lambda/T_{\mathcal{N}}\Lambda\cong\Z$ to identify~$\lambda_0(\pi_0(y),\pi_0(y')) \in \Lambda_0$ with~$\aug(\lambda_0(\pi_0(y),\pi_0(y'))) \in \Z$. By definition of~\(\lambda_0\) (see Proposition~\ref{prop: SpecificRim}) we get that \[\aug(\lambda_0(\pi_0(y),\pi_0(y')))
    =\aug\left( \sum_{g \in \Z_d} Q_{\Sigma_d(F)}(y,gy')g^{-1} \right)
    =\sum_{g \in \Z_d} Q_{\Sigma_d(F)}(y,gy').\]
    Now, by \cite[Proposition~\(A.4\)]{ACAB} we get
    \[
    \sum_{g \in \Z_d} Q_{\Sigma_d(F)}(y,gy')=Q_{N_k}(p_*(y),p_*(y')).
    \]
    Thus~$\beta_0$ preserves the hermitian forms.
    We have that~\(\beta_0\) is surjective since~\(\beta_0\circ \pi_0 = p_*\) and~\(p_*\) surjective by Lemma~\ref{lem: covering surjects}. Using Proposition~\ref{prop: SpecificRim} and the well known fact that a surjective morphism between nondegenerate bilinear forms is injective, we can conclude that~\(\beta_0\) is an isometry. 
\end{proof}

\section{The Evenness Condition}\label{sec: even}

Here we prove the third condition of Proposition~\ref{prop: suff}.

\begin{proposition}\label{prop: evenness}
     We have that~\(\lambda_{\Sigma_d(F)}(e,e) \in \Lambda_+ = \{r+ \ol{r} \ \vert \ r\in \Lambda\} \) for any element~\(e\) in the kernel of the projection map  \[\begin{tikzcd}
	{\eta \colon H_2(\Sigma_d(F))} & {H_2(\Sigma_d(F))_0} & {H_2(N)\oplus \Z^{2k} } & H_2(N).
	\arrow["{\pi_0}", from=1-1, to=1-2]
	\arrow["{\beta_0}", from=1-2, to=1-3]
	\arrow["{\proj_1}",from=1-3, to=1-4]
    \end{tikzcd}\]
\end{proposition}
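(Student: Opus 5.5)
Write \(\lambda:=\lambda_{\Sigma_d(F)}\) and \(\lambda(e,e)=\sum_{h\in\Z_d} a_h h\) with \(a_h = Q_{\Sigma_d(F)}(e,h^{-1}e)\). The plan is to reduce membership in \(\Lambda_+\) to a parity statement about the single coefficient \(a_1 = Q_{\Sigma_d(F)}(e,e)\). Since \(\lambda\) is hermitian, \(a_h = a_{h^{-1}}\). For \(h\neq h^{-1}\) the pair \(a_h h + a_{h^{-1}}h^{-1}\) equals \(r+\ol r\) with \(r=a_h h\). If \(d\) is even, the element \(h_0=g^{d/2}\) satisfies \(h_0 = h_0^{-1}\). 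In that case \(a_{h_0}h_0\) lies in \(\Lambda_+\) exactly when \(a_{h_0}\) is even. The identity coefficient \(a_1\) must also be even. So the claim reduces to showing that \(Q_{\Sigma_d(F)}(e,e)\) is even, and, when \(d\) is even, that \(Q_{\Sigma_d(F)}(e,g^{d/2}e)\) is even, for every \(e\in\ker\eta\).

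For the geometric input I would follow the genus zero argument of \cite[Section~6]{ACAB}, which adapts \cite{LWGenus}. By Proposition~\ref{prop: Z-split}, \(\eta(e)=0\) means that \(p_*(e)\in H_2(N_k)\) lies in the \(H(\Z)^{\oplus k}\) summand, i.e.\ in \(H_2(\#^k S^2\times S^2)\). The key step is to realise such \(e\) by classes coming from the complement. First I would check that \(Q_{\Sigma_d(F)}(e,z)=0\): we have \(d\,Q(e,z)=Q_{N_k}(p_*e,p_*z)\), and \(p_*z = i_*^{-1}(x)\oplus 0\) is orthogonal to the \(S^2\times S^2\) summands. By the Mayer--Vietoris argument in the proof of Theorem~\ref{thm: main}, \(e\) then lies in \(\im(j_*\colon H_2(X^d_F)\to H_2(\Sigma_d(F)))\). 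Lifting along \(\pi_2(X_F)\cong\pi_2(X_F^d)\cong H_2(X^d_F)\), write \(e=j_*(\tilde e)\), where \(\tilde e\) is represented by the lift of an immersed sphere \(S\) in \(X_F\). Then \(\lambda(e,e)\) is computed by the equivariant intersection form of \(X_F\) on \(\pi_2(X_F)\), namely Wall's \(\lambda(S,S)=\mu(S)+\ol{\mu(S)}+e(\nu_S)\cdot 1\), where \(e(\nu_S)\) is the Euler number of the normal bundle. The terms \(\mu+\ol\mu\) lie in \(\Lambda_+\) automatically; this handles the \(h_0\) coefficient, which only appears through \(\mu+\ol\mu\). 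So it remains to show that the normal Euler number of \(S\) is even.

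The normal Euler number equals \(Q_{N_k}(p_*e,p_*e)\) modulo the self-intersection contributions, and more precisely its parity is \(w_2(X_F)\) evaluated on \([S]\). Since \(p_*e\) lies in \(H_2(\#^k S^2\times S^2)\), which is even, we get \(Q_{N_k}(p_*e,p_*e)\equiv 0 \pmod 2\). Equivalently, \(w_2(N_k)\) vanishes on the image of \(S\). Here is the main obstacle: the earlier reduction of Section~\ref{sec: branchsplit} made \(X_F\) non-spin by summing with \(\C P^2\), and that reduction must not be in force here. One has to make sure that \(w_2\) of \(X_F\), restricted from \(N_k\), vanishes on spheres mapping into the \(S^2\times S^2\) part. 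I would argue through \(\pi_2(X_F)\to\pi_2(N_k)\to H_2(N_k)\): the normal bundle of \(S\) in \(X_F\) is its normal bundle in \(N_k\), so \(e(\nu_S)\equiv Q_{N_k}([S],[S])\pmod 2\). If needed, I would handle the discrepancy between \(p_*e\) and its projection by using that \(\eta(e)=0\) exactly, so \([S]\) has no \(H_2(N)\) component, and by invoking \cite[Proposition~A.4]{ACAB} to compare \(Q_{\Sigma_d(F)}(e,e)\) with the downstairs counts.
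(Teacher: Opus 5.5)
Your proof is correct, but it follows a different route from the paper. The paper does not redo the computation. It imports the genus-zero argument of \cite[Section~6]{ACAB} verbatim and repairs only the step in \cite[Proposition~6.7]{ACAB} where freeness of $H_2$ of the branched cover was used to write the fixed class as $z=\mathcal{N}z_0$. It recovers this by checking $H_2(\Sigma_d(F))^{\Z_d}=\mathcal{N}\cdot H_2(\Sigma_d(F))$ separately on the $\Lambda$ and $\Lambda_1$ summands of the splitting from Proposition~\ref{prop: BranchedSplitting}.

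You never need $\mathcal{N}$-divisibility of $z$. You get $Q_{\Sigma_d(F)}(e,z)=0$ directly from \cite[Proposition~A.4]{ACAB} and the orthogonality of $p_*z=i_*^{-1}(x)\oplus 0$ to $H_2(\#^kS^2\times S^2)$. This places $e$ in the image of $H_2(X_F^d)\cong\pi_2(X_F)$. Wall's formula $\lambda(S,S)=\mu(S)+\overline{\mu(S)}+e(\nu_S)$ then makes every coefficient other than the identity one (including the $g^{d/2}$ coefficient) automatically admissible. The identity coefficient is controlled by $e(\nu_S)\equiv Q_{N_k}(\iota_*[S],\iota_*[S]) \pmod 2$, with $\iota\colon X_F\hookrightarrow N_k$.

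Your route is independent of the $\Lambda$-module structure of $H_2(\Sigma_d(F))$. Hence it does not depend on Proposition~\ref{prop: BranchedSplitting}, nor on the stable uniqueness Theorem~\ref{thm: StableUnique} on which that proposition rests. The paper's route instead gains brevity by reusing the genus-zero machinery unchanged.

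Two small corrections to your last paragraph. First, the ``obstacle'' is not one. The congruence $e(\nu_S)\equiv Q_{N_k}([S],[S])\pmod 2$ holds in any oriented $4$-manifold, and the evenness comes from the hyperbolic summand being even, not from any spin condition on $X_F$. The $\C P^2$ trick of Section~\ref{sec: branchsplit} is irrelevant here. Second, there is no discrepancy to handle. For the covering map $c\colon X_F^d\to X_F$ we have $p\circ j=\iota\circ c$, which gives $\iota_*[S]=p_*(e)$ exactly. This class lies in $H_2(\#^kS^2\times S^2)$ precisely because $\eta(e)=0$.
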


\begin{proof}
    The proof follows identically from the proof of the analogous genus zero statement found in \cite[Section~\(6\)]{ACAB} except for one step. In \cite{ACAB}, the proof of Proposition~\(6.7\) uses the fact that the second homology of the branched cover is a free~\(\Lambda\)-module, to conclude that there exists an element~\(z_0\in H_2(\Sigma_d(F))\) such that~\(z= \mathcal{N}z_0\). Recall here that~\(z\) is the distinguished fixed point in the pointed Hermitian form~\((H_2(\Sigma_d(F)), \lambda, z)\). In our case of nonzero genus, we no longer have a free~\(\Lambda\)-module, but we can still guarantee the existence of such a~\(z_0\). Thus to finish the proof, it suffices to show that~\(H_2(\Sigma_d(F))^{\Z_d} = \mathcal{N} \cdot H_2(\Sigma_d(F)) \subseteq H_2(\Sigma_d(F))\) as subsets, since~\(z\in H_2(\Sigma_d(F))^{\Z_d}\).

    Indeed, by Proposition~\ref{prop: BranchedSplitting} we have that \[H_2(\Sigma_d(F)) \cong \Lambda_1^{2g} \oplus \Lambda^{b_2+2k}.\]
It suffices to verify this claim on each summand. Recall from Subsection~\ref{subsec: formsonsubs} that~\(\mathcal{N}H\subseteq H^{\Z_d}\) for any~\(\Lambda\)-module~\(H\). 

    If~\(a\in \Lambda\) then we can write~\(a\) uniquely as~\(a= \sum_{i=0}^{d-1} a_i g^i\) for some~\(a_i\in \Z\). If~\(a\in \Lambda^{\Z_d}\), then~\(g^ka=a\) for all~\(k\) implies that~\(a_i = a_0\) for all~\(i\). Thus~\(a= (\sum_{i=0}^{d-1} g^i) a_0 = \mathcal{N}a_0\).

    If~\(b\in \Lambda_1\), the relation~\(\mathcal{N}=0 \in\Lambda_1\) implies that~\(b\) can be written uniquely as~\(b=\sum_{i=0}^{d-2} b_i g^i\).  If~\(b\in \Lambda_1^{\Z_d}\), then~\((1-g)b=0 \in \Lambda_1\). Thus \[\sum_{i=0}^{d-2}b_i g^i= \sum_{i=0}^{d-2}b_i g^{i+1}.\] Comparing coefficients implies that~\(b_0=0\) and~\(b_{i+1}=b_i\) for all~\(i\). Thus~\(b=0 \in \Lambda_1\). Hence~\(b=\mathcal{N}b'\in \Lambda\) for some~\(b'\in \Lambda\). This completes the proof of the claim. 

    Thus the proof from \cite[Proposition~\(6.5\)]{ACAB} allows us to conclude.
\end{proof}

\section{The~\(2\)-adic Condition}\label{sec: Cond5}

In this section we verify the fourth condition of Proposition~\ref{prop: suff}. We do this by closely following \cite[Proof of Lemma~\(2.7\)]{LWGenus}, providing more details, and verifying that the same argument works in the case of surfaces with non-empty boundary.
Recall that for this condition we restrict ourselves to the case that~\(d\) is even and~\(b_2(N)\leq 2\). Let~\(\wh{\Z}_2\) denote the ring of~\(2\)-adic integers. We can define a~\(\wh{\Z}_2\)-module~\(\wh{M} := H_2(\Sigma_d(F))_1 \otimes_{\Lambda_1} \wh{\Z}_2\) using extension of scalars via the ring homomorphism \[\varphi\colon \Lambda_1 \to \wh{\Z}_2 ; \ \Sum_{i=0}^{d-1} a_i g^i \Mod{(\mathcal{N})} \mapsto \Sum_{i=0}^{d-1} a_i (-1)^i.\]
The hermitian form~\(\lambda_{\Sigma_d(F)}\) induces a hermitian form~\(\lambda_1\) on~\(H_2(\Sigma_d(F))_1\). Then we have the induced hermitian form~\(\wh{\lambda}\colon \wh{M}\times \wh{M} \to \wh{\Z}_{2}\) defined on pure tensors uniquely as \[\wh{\lambda}(x\otimes a, y\otimes b)= ab \varphi(\lambda_1(x,y))\] and extended linearly to every element of~\(\wh{M}\). The aim of this section is to verify the following.

\begin{proposition}\label{prop: 2adic condition}
    If~\(d\) is even and~\(b_2(N)\leq 2\), then~\((\wh{M},\wh{\lambda})\) contains a hyperbolic submodule equivalent to~\(H(\wh{\Z}_{2})^{\oplus k}\).
\end{proposition}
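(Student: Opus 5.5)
The plan is to reduce the statement to a structural computation of \((\wh{M},\wh{\lambda})\), and then compare with the standard model surface \(D\,\natural^g\,T\) from Section~\ref{sec: branchsplit}.

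\textbf{Step 1: the module and its radical.} By Proposition~\ref{prop: suff}(1), \(H_2(\Sigma_d(F))\cong \Lambda^{b_2(N)+2k}\oplus\Lambda_1^{2g}\). Hence \(H_2(\Sigma_d(F))_1\cong \Lambda_1^{b_2(N)+2k+2g}\), and \(\wh{M}\) is a free \(\wh{\Z}_2\)-module of rank \(b_2(N)+2k+2g\). Since \(d\) is even, \(\varphi(\mathcal{N})=0\), so \(\varphi\) is well defined.

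By Proposition~\ref{prop: SpecificRim}, \(\lambda_1\) is nondegenerate with \(\coker(\Ad(\lambda_1))\cong\Lambda_1^{2g}\). After tensoring, this cokernel becomes \(\wh{\Z}_2^{2g}\). Since \(\wh{\Z}_2\) is a PID, I expect this to let me split
\[
(\wh{M},\wh{\lambda})\cong (\wh{M}',\wh{\lambda}')\oplus(\text{radical of rank }2g).
\]
Here \(\wh{\lambda}'\) is nonsingular of rank \(b_2(N)+2k\). The radical should come from the \(\Lambda_1^{2g}\) summand, geometrically the lifted curves of \(\wt F\). The claim then reduces to showing that \((\wh{M}',\wh{\lambda}')\) contains \(H(\wh{\Z}_2)^{\oplus k}\).

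\textbf{Step 2: invariance and the model computation.} Unimodular forms over \(\wh{\Z}_2\) are classified by rank, parity, determinant modulo squares and the Hasse invariant. I would show these invariants of \((\wh{M}',\wh{\lambda}')\) depend only on \(N\), \(x\), \(K\) and \(k\).

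1. The parity and determinant should be computable from the \((-1)\)-eigenspace of the deck action, which is the \(\Z_2\)-quotient picture of the intermediate double branched cover.
2. The determinant is then controlled by \(\det(K)\) and \(H_1(\Sigma_d(K))=0\).
3. The signature-type data is governed by the \(j=d/2\) term \(\sigma(N)-\tfrac12 x\cdot x+\sigma_K(-1)\).

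Rather than compute these directly, I would use the stable uniqueness Theorem~\ref{thm: StableUnique} exactly as in the proof of Proposition~\ref{prop: BranchedSplitting}. After \(n\) further stabilisations, \(F\) becomes equivalent to \(D\,\natural^g\,T\subset N_{k'+n}\). In that model, the stabilising \(S^2\times S^2\) summands lie away from the surface, so \(\lambda_{\Sigma_d}\) splits off \(H(\Lambda)^{\oplus(k'+n)}\), and after \(\varphi\) this gives \(H(\wh{\Z}_2)^{\oplus(k'+n)}\). Consequently
\[
(\wh{M}',\wh{\lambda}')\oplus H(\wh{\Z}_2)^{\oplus n}\cong (\wh{M}'_{\mathrm{model}},\wh{\lambda}'_{\mathrm{model}})\oplus H(\wh{\Z}_2)^{\oplus n}.
\]

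\textbf{Step 3: cancellation (the main obstacle).} I need to cancel the extra \(H(\wh{\Z}_2)^{\oplus n}\) and land on exactly \(k\) hyperbolic summands, not \(k'\). Witt cancellation fails in general over \(\wh{\Z}_2\) for even/odd mixtures; for example \(H\oplus\langle1\rangle\cong\langle1\rangle\oplus\langle1\rangle\oplus\langle-1\rangle\). This is precisely where \(b_2(N)\le 2\) enters.

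I would argue directly with the classification. \((\wh{M}',\wh{\lambda}')\) is unimodular of rank \(b_2(N)+2k\), and its invariants agree with those of \(Q_N\otimes\wh{\Z}_2\oplus H(\wh{\Z}_2)^{\oplus k}\); I would check this using Proposition~\ref{prop: suff}(2) and the Rim square of Proposition~\ref{prop: SpecificRim} to relate \(\lambda_1\) mod \(2\) to \(\lambda_0\). The remaining cases are rank \(b_2(N)\in\{0,1,2\}\) together with the parity possibilities. I would handle them by explicit lists of \(2\)-adic unimodular forms of rank at most \(2k+2\), exhibiting \(H^{\oplus k}\) whenever the rank, determinant, parity and Hasse invariant agree.

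I expect the hardest parts to be the odd case and tracking the Hasse invariant through the radical splitting of Step 1. This is the point where I would follow \cite[Lemma~\(2.7\)]{LWGenus} most closely, checking that the knot boundary only contributes through \(H_1(\Sigma_d(K))=0\).
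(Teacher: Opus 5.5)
Your Step~1 is false, and Steps~2--3 are built on it. The form \(\lambda_1\) is nondegenerate on free \(\Lambda_1\)-modules of equal rank, so the cokernel of its adjoint is torsion. Using \(\Hom_\Lambda(\Lambda_1,\Lambda)\cong(1-\tau)\Lambda\), with \(\tau\) the generator of \(\Z_d\), it is \(\Lambda_d^{2g}\); the ``\(\Lambda_1^{2g}\)'' in Proposition~\ref{prop: SpecificRim} cannot be read literally. Since \(\varphi(1-\tau)=2\), the \(\Lambda_1^{2g}\) summand contributes a \(2\)-modular block, not a radical. For example, for the unknotted torus with \(d=2\) one has \(\tau=-1\) and \(Q=\mathcal{H}\) on \(H_2(\Sigma_2(T))\), so \(\lambda=(1-\tau)\mathcal{H}\) and \(\wh{\lambda}=2\mathcal{H}\). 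Hence \(\wh{\lambda}\) is nondegenerate with \(\coker\Ad(\wh{\lambda})\cong\F_2^{2g}\). The most your \(\wh{M}'\) could be is the unimodular constituent \(L_0\) of a Jordan splitting \(\wh{M}=L_0\perp 2L_1\). Over \(\wh{\Z}_2\) that constituent is not an isometry invariant (e.g.\ \(\langle 1\rangle\perp\langle 2\rangle\cong\langle 3\rangle\perp\langle 6\rangle\)), so there is no well-defined form whose rank, parity, determinant and Hasse invariant you could compute. This is exactly why the paper discards the \(2g\)-dimensional degeneracy only after reducing mod~\(2\), on \(\wt{M}/\ker(\wt{\lambda})\).

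Steps~2 and~3 contain further false claims. The disc \(D\) from \cite{ACAB} lives in \(N_{k'}\) and in general cannot be made disjoint from the stabilising summands; otherwise it would be a simple disc in \(N\), which requires the genus-zero inequality. So the model does not split off \(H(\Lambda)^{\oplus k'}\): only the \(n\) extra summands are disjoint, and your displayed stable isometry needs \(H(\wh{\Z}_2)^{\oplus(k'-k+n)}\) on the \(F\) side.

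More seriously, the target in Step~3 is wrong. Take \(N=(\C P^2)^\circ\), \(x\) twice a generator, \(K=U\), \(d=2\), \(k=0\), and \(F\) the conic disc \(\natural\) an unknotted torus. The conic part of \(\Sigma_2(F)\) is punctured \(S^2\times S^2\), with the deck involution swapping the two factor classes, and this gives \(\wh{M}\cong\langle -1\rangle\perp 2\mathcal{H}\). Its unit values \(-a^2+4bc\) are all \(\equiv 3,7 \bmod 8\), so no Jordan splitting has unimodular part \(\langle 1\rangle=Q_N\otimes\wh{\Z}_2\). This fits your own remark that the relevant signature is \(\sigma(N)-\tfrac12 x\cdot x+\sigma_K(-1)\), not \(\sigma(N)\). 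So the invariant comparison cannot succeed, and ``explicit lists'' do not address the cancellation problem.

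The missing idea is the paper's mod-\(2\) mechanism:
\begin{enumerate}
\item Conditions~(2) and~(3) of Proposition~\ref{prop: suff} give lifts \(u_i,v_i\) of a hyperbolic basis of \(H(\Z)^{\oplus k}\) lying in \(E(\wh{M})\). They span \(W\cong H(\F_2)^{\oplus k}\) in \(\wt{M}\).
\item Wall's lifting (Lemma~\ref{lem: 2-adic Suffice}) reduces everything to showing \(\Arf(\wt{q}_W)=0\).
\item If a radical vector with \(\wt{q}=1\) exists, it repairs the Arf invariant. Otherwise \(b_2(N)\le 2\) forces \(\dim \wt{M}/\ker(\wt{\lambda})\in\{2k,2k+2\}\).
\item In the tight case \(\Arf(\wt{q}_W)\) equals the Arf invariant of the whole nonsingular quotient, which is invariant under internal and external stabilisation. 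It vanishes because \(F\) is compared, via stable uniqueness, with a surface \(F'\subset N\) whose \(k\) stabilisations really are disjoint.
\end{enumerate}
That last comparison is where your model argument should have gone.
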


We first formulate an algebraic condition for this to hold. Define the even submodule of~\(\wh{M}\) as \[E(\wh{M}) := \{ y\in \wh{M} \ \vert \ \wh{\lambda}(y,y)\equiv 0 \Mod{2}\}.\] On this submodule we define a quadratic form \[q\colon E(\wh{M}) \to \wh{\Z}_{2}; \ y\mapsto \frac{\wh{\lambda}(y,y)}{2} .\]
Notice that~\(q(x+y) -q(x)-q(y) = \wh{\lambda}(x,y)\), so~\(q\) is a quadratic refinement of~\(\wh{\lambda}\). We wish to pass to a field and work there. Via the ring homomorphism
\[\begin{tikzcd}
	{f\colon \wh{\Z}_2} && {\wh{\Z}_{2} / 2\wh{\Z}_{2} \cong \mathbb{F}_2}
	\arrow["{\mod{2}}", from=1-1, to=1-3]
\end{tikzcd}\]
define~\((\wt{M}, \wt{q}) := (E(\wh{M}), q) \otimes \mathbb{F}_2\) in the same way as above. We get that~\(\wt{M} \cong E(\wh{M}) / 2 E(\wh{M})\) and see that~\(\wt{q}\) is a quadratic refinement of~\(\wh{\lambda} \Mod{2}\), which we denote~\(\wt{\lambda}\). Notice that now we are thinking of~\(\wt{\lambda}\) as a symmetric form over~\(\mathbb{F}_2\). We then have the following reformulation of our problem.

\begin{lemma}\label{lem: 2-adic Suffice}
   ~\((\wt{M}, \wt{\lambda})\) contains a hyperbolic subspace~\(W\) equivalent to~\(H(\mathbb{F}_2)^{\oplus k}\) such that the Arf invariant~\(\Arf(\wt{q}_W)\) vanishes if and only if~\((\wh{M},\wh{\lambda})\) contains a hyperbolic submodule equivalent to~\(H(\wh{\Z}_{2})^{\oplus k}\).
\end{lemma}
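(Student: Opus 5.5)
The plan is to prove the two directions separately, lifting hyperbolic pairs from $\F_2$ to $\wh{\Z}_2$ by a Hensel-type argument in one direction, and reducing modulo $2$ in the other. Throughout, I use that $\wh{\lambda}$ is nonsingular over $\wh{\Z}_2$. This follows from Proposition~\ref{prop: SpecificRim}: it gives nondegeneracy of $\lambda_1$ with $\coker\Ad(\lambda_1)\cong\Lambda_1^{2g}$, and one checks that this cokernel dies, or at least becomes harmless, after tensoring with $\wh{\Z}_2$ via $\varphi$. This point needs care, and I would check it first. I also use that $2$ is not a unit in $\wh{\Z}_2$, while every element $\equiv 1 \Mod 2$ is a unit.

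For the direction ($\Leftarrow$), suppose $H(\wh{\Z}_2)^{\oplus k}\subseteq \wh{M}$ has a hyperbolic basis $e_i,f_i$ with $\wh{\lambda}(e_i,e_i)=\wh{\lambda}(f_i,f_i)=0$ and $\wh{\lambda}(e_i,f_j)=\delta_{ij}$. All basis vectors lie in $E(\wh{M})$, with $q(e_i)=q(f_i)=0$. A hyperbolic submodule over a local ring is unimodular, hence an orthogonal summand. The images of the $e_i,f_i$ in $\wt{M}=E(\wh{M})/2E(\wh{M})$ then span a subspace $W$ on which $\wt{\lambda}$ is the standard hyperbolic form over $\F_2$. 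The elements remain independent because their Gram matrix mod $2$ is invertible. Moreover $\wt{q}$ vanishes on the symplectic basis, so $\Arf(\wt{q}_W)=\sum \wt{q}(e_i)\wt{q}(f_i)=0$.

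For the direction ($\Rightarrow$), take $W\cong H(\F_2)^{\oplus k}$ with $\Arf(\wt{q}_W)=0$. By the classification of quadratic forms over $\F_2$, I can choose a symplectic basis $\bar e_i,\bar f_i$ of $W$ with $\wt{q}(\bar e_i)=\wt{q}(\bar f_i)=0$ and $\wt{\lambda}(\bar e_i,\bar f_j)=\delta_{ij}$. Lift these to $e_i,f_i\in E(\wh{M})$. Then $\wh{\lambda}(e_i,e_i)\in 4\wh{\Z}_2$, the mixed pairings $\wh{\lambda}(e_i,f_j)$ are $\equiv\delta_{ij}$ mod $2$, and the others are even. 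The Gram matrix is therefore invertible over $\wh{\Z}_2$, so the span $U$ is a nonsingular free submodule of rank $2k$. It remains to show that $U$ is hyperbolic. Inductively, I replace $f_1$ by $u^{-1}f_1$ with $u=\wh{\lambda}(e_1,f_1)$ a unit, and then correct $e_1\mapsto e_1-\tfrac{\wh{\lambda}(e_1,e_1)}{2}f_1$. This makes $e_1$ isotropic, using $\wh{\lambda}(e_1,e_1)\in 4\wh{\Z}_2$ so that the coefficient is even and the mod $2$ data is unchanged. Solving $\wh{\lambda}(f_1+te_1,f_1+te_1)=0$ for $t=-\wh{\lambda}(f_1,f_1)/2$ then makes $f_1$ isotropic, which is possible because $\wh{\lambda}(f_1,f_1)$ is divisible by $4$. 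Next I split off the hyperbolic plane, project the remaining vectors orthogonally (the projection changes them by even multiples, preserving the parity conditions and $q$ mod $2$), and induct on $k$.

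I expect the main obstacle to be bookkeeping the divisibility. I must ensure that after each orthogonal projection the new vectors still satisfy $\wh{\lambda}(v,v)\in 4\wh{\Z}_2$, i.e.\ $q\equiv 0$ mod $2$. This holds because $q$ is a quadratic refinement and the correction terms are of the form $2c\,w$ with $\wh{\lambda}(w,v)$ even. A secondary subtlety is the nonsingularity of $\wh{\lambda}$ on $\wh{M}$ needed in the reverse direction. If only nondegeneracy is available, I will instead argue that a unimodular hyperbolic sublattice is still an orthogonal summand, which suffices.
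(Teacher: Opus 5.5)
Your argument is sound in outline, and for the substantive direction it takes a different route from the paper.

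The direction from $H(\wh{\Z}_2)^{\oplus k}\subseteq\wh{M}$ to $W$ is the same reduction mod $2$ as in the paper. You additionally check, via the Gram matrix, that the reduced basis stays independent in $E(\wh{M})/2E(\wh{M})$, which the paper leaves implicit.

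For the converse, the paper argues globally:
\begin{enumerate}
\item it writes $(\wt{M},\wt{\lambda},\wt{q})\cong H(\F_2)^{\oplus k}\oplus(P,\Theta,r)$;
\item it lifts $(P,\Theta,r)$ by Wall's Lemma~1;
\item it uses the uniqueness part of Wall's Theorem~2 to identify $(E(\wh{M}),q)$ with $H(\wh{\Z}_2)^{\oplus k}\oplus\wh{P}$.
\end{enumerate}
That is quick, but it needs $q$ to be nonsingular on all of $E(\wh{M})$. This sits uneasily with the paper's own later remark that $\ker\wt{\lambda}$ has dimension at least $2g$.

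You instead lift only a symplectic basis of $W$ on which $\wt{q}$ vanishes. The Gram matrix of the lifts is invertible because it is invertible mod $2$, and you hyperbolize their span $U$ by hand. In effect you apply the lifting principle only to $U$, where its hypotheses genuinely hold. A sublattice with unimodular Gram matrix splits off orthogonally whatever the ambient form is. So your route is more elementary and avoids any global nonsingularity assumption.

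Two things need fixing.

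First, drop the opening claim that $\wh{\lambda}$ is nonsingular on $\wh{M}$; for genus $g>0$ it is false. Since $H_2(\Sigma_d(F))_1$ is free over $\Lambda_1$, right exactness gives $\coker\Ad(\wh{\lambda})\cong\coker\Ad(\lambda_1)\otimes_{\Lambda_1}\wh{\Z}_2$. The contribution of the $\Lambda_1$-summands in Proposition~\ref{prop: SpecificRim} does not die under this base change. Concretely, take $d=2$ and the unknotted torus $T$. The generator of $\Z_2$ acts by $-1$ on $H_2(\Sigma_2(T))\cong\Z^2$, so $\lambda_1=2Q_{\Sigma_2(T)}$, and $\wh{\lambda}$ is twice a unimodular form on that summand. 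Your argument never actually uses this claim, so the fallback you mention is unnecessary.

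Second, the isotropy step is wrong as written. Suppose $\wh{\lambda}(e_1,f_1)=1$, and set $a=\wh{\lambda}(e_1,e_1)$ and $b=\wh{\lambda}(f_1,f_1)$. Then $e_1-\tfrac{a}{2}f_1$ has self-pairing $a^2b/4$, not $0$, and it pairs with $f_1$ to $1-\tfrac{ab}{2}$. What you need is an exact root of $a+2c+bc^2=0$:
\begin{enumerate}
\item Since $a,b\in4\wh{\Z}_2$, the unit $1-ab\equiv1\Mod{16}$ is a square.
\item Take $s=\sqrt{1-ab}$ with $s\equiv1\Mod{4}$. Then $c=-a/(1+s)$ lies in $2\wh{\Z}_2$ and solves the equation.
\item Alternatively, iterate your correction with renormalization; the $2$-adic valuation of the error increases by at least $2$ each time, so it converges.
\item Then rescale $f_1$ by the unit $(1+cb)^{-1}$ before your (correct) step making $f_1$ isotropic.
\end{enumerate}
Because $c$ is even and the rescalings are by units $\equiv1\Mod{2}$, the mod-$2$ data are unchanged. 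With that, your projection-and-induction bookkeeping goes through as you describe.
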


\begin{proof}
    We first prove the if direction. Suppose that~\((\wh{M},\wh{\lambda})\) contains a hyperbolic submodule equivalent to~\(H(\wh{\Z}_2)^{\oplus k}\). Restricting to this form, notice that~\(E(H(\wh{\Z}_2)^{\oplus k}) = H(\wh{\Z}_2)^{\oplus k}\) and so~\((E(H(\wh{\Z}_2)^{\oplus k}), q) \otimes \mathbb{F}_2 \cong H(\F_2)^{\oplus k}=:W\) as a quadratic form, as required.
    
    We now prove the only if direction. Suppose that~\(W\) is a hyperbolic summand over~\(\F_2\) with respect to~\(\wt{\lambda}\). Thus it is either hyperbolic or anisotropic with respect to~\(\wt{q}\). The Arf invariant vanishing means that it is in fact hyperbolic.
    Thus we have a hyperbolic summand~\(W\) of~\((\wt{M},\wt{\lambda},\wt{q})\). We will lift this to a hyperbolic summand of~\((\wh{M},\wh{\lambda})\).
    Indeed, since~\(\wh{\Z}_2\) is a complete, local ring, we can apply the lifting principle for quadratic forms given by Wall \cite{WallClassHermFormsOne}. In particular, \cite[Lemma~\(1\)]{WallClassHermFormsOne} states that since we have the surjective map~\(f\colon \wh{\Z}_2 \to \wh{\Z}_{2} / 2\wh{\Z}_{2} \cong \mathbb{F}_2\), we get a surjective map~\(f_*\) on the set of quadratic forms over~\(\wh{\Z}_2\) to quadratic forms over~\(\F_2\). Furthermore, by the first part of \cite[Theorem~\(2\)]{WallClassHermFormsOne}, a quadratic form over~\(\wh{\Z}_2\) is nonsingular if and only if the corresponding quadratic form over~\(\F_2\) given by~\(f_*\) is nonsingular. In our case,~\(q\) is nonsingular, so~\(f_*(q)=\wt{q}\) is nonsingular.

    Since the quadratic form is nonsingular we can apply the second part of \cite[Theorem~\(2\)]{WallClassHermFormsOne} which states the following. Suppose there exists another quadratic form~\(q'\) such that~\(f_*(q')= \wt{q}\). That is, the restrictions of~\(q\) and~\(q'\) to~\(\wt{M}\) agree. Then~\(q\) and~\(q'\) are isometric over~\(\wh{\Z}_2\). Furthermore, this global isometry is given by a specific $\mathbb{Z}_2$-linear automorphism $\xi: E(\wh{M}) \to E(\wh{M})$ such that $\xi^*(q) = q'$, and $\xi$ is congruent to the identity matrix modulo 2 (i.e.\ $\xi(x) \equiv x \mod{2}$ for all $x \in E(\wh{M})$). Both parts of \cite[Theorem~\(2\)]{WallClassHermFormsOne} along with Wall's Lemma~\(1\) combine to show that for any quadratic form over~\(\F_2\) there is a unique, nonsingular lift to a quadratic form over~\(\wh{\Z}_2\).

    Now, we know that~\((\wt{M},\wt{\lambda},\wt{q})\) contains a hyperbolic summand equivalent to~\(H(\mathbb{F}_2)^{\oplus k}\), so it pulls back to a hyperbolic summand. Indeed, write \[(\wt{M},\wt{\lambda},\wt{q}) \cong H(\F_2)^{\oplus k} \oplus (P,\Theta,r)\] for some quadratic form~\((P,\Theta ,r)\) over~\(\F_2\). By lifting, there exists~\((\wh{P},\wh{\Theta})\) over~\(\wh{\Z}_2\) mapping to~\((P,\Theta,r)\). Hence~\(H(\wh{\Z}_2) \oplus (\wh{P},\wh{\Theta},\wh{r})\) is one possible lift. Since~\((E(\wh{M}), \wh{\lambda})\) is another, by uniqueness of lifting we have an equivalence \[H(\wh{\Z}_2) \oplus (\wh{P},\wh{\Theta},\wh{r})\cong (E(\wh{M}), \wh{\lambda}).\] Hence~\((E(\wh{M}), \wh{\lambda})\) has a~\(H(\wh{\Z}_2)^{\oplus k}\) summand, as desired.
\end{proof}

Using the results of previous sections, we are able to show the existence of such a hyperbolic summand, and that we can choose one such that the associated Arf invariant vanishes. Then we can conclude the proof of Proposition~\ref{prop: 2adic condition} using Lemma~\ref{lem: 2-adic Suffice}.

\begin{proof}[Proof of Proposition~\ref{prop: 2adic condition}]
Recall that by Proposition~\ref{prop: Z-split} there exists an orthogonal splitting of pointed hermitian modules \[\beta_0\colon (H_2(\Sigma_d(F))_0, \lambda_0,z_0)\xrightarrow[]{\cong}(H_2(N),Q_{N}, x) \oplus H(\Z)^{\oplus k}.\] 
Let~\(H_0\) be the hyperbolic submodule of~\((H_2(\Sigma_d(F))_0, \lambda_0)\) given by~\(\beta_0^{-1}(H(\Z)^{\oplus k})\), with basis given by pulling back the standard generators of~\(H(\Z)^{\oplus k}\). Since~\(\pi_0\colon H_2(\Sigma_d(F))\to H_2(\Sigma_d(F))_0\) is surjective, we have a collection of elements~\(\{u_i,v_i\}_{1\leq i \leq k}\) in~\(H_2(\Sigma_d(F))\) which map onto the basis elements of~\(H_0\) via~\(\pi_0\).
Now, by taking the image of~\(\{u_i,v_i\}\) under the quotient map~\(\pi_1\colon H_2(\Sigma_d(F)) \to H_2(\Sigma_d(F))_1\) and tensoring with~\(\wh{\Z}_2\), we obtain corresponding elements~\(\{\wh{u}_i,\wh{v}_i\}_{1\leq i \leq k}\) in~\(\wh{M}\).

\begin{claim*}
    The elements~\(\{\wh{u}_i,\wh{v}_i\}\) are contained inside the even submodule~\(E(\wh{M})\).
\end{claim*}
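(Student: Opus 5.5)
We need to show that \(\wh{\lambda}(\wh{u}_i,\wh{u}_i)\) and \(\wh{\lambda}(\wh{v}_i,\wh{v}_i)\) are even in \(\wh{\Z}_2\). The plan is to relate \(\wh{\lambda}(\wh{u}_i,\wh{u}_i)\) to the augmentation of \(\lambda_{\Sigma_d(F)}(u_i,u_i)\), and then to use the evenness condition, Proposition~\ref{prop: evenness}.

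First we unwind the definitions. Since \(\wh{u}_i = \pi_1(u_i)\otimes 1\), we have
\[\wh{\lambda}(\wh{u}_i,\wh{u}_i) = \varphi(\lambda_1(\pi_1 u_i,\pi_1 u_i)) = \varphi(\Pi_1(\lambda_{\Sigma_d(F)}(u_i,u_i))).\]
The composite \(\varphi\circ\Pi_1\colon \Lambda\to\wh{\Z}_2\) sends \(\sum a_j g^j\mapsto \sum a_j(-1)^j\). This is well defined on \(\Lambda_1\) because \(d\) is even, so that \(\varphi(\mathcal{N})=0\). Reducing modulo \(2\), we get \(\sum a_j(-1)^j \equiv \sum a_j = \aug(\cdot) \pmod 2\). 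Hence
\[\wh{\lambda}(\wh{u}_i,\wh{u}_i)\equiv \aug(\lambda_{\Sigma_d(F)}(u_i,u_i)) \pmod 2.\]

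Next we compute the augmentation. By the definition of \(\lambda_0\) and the isometry \(\beta_0\) of Proposition~\ref{prop: Z-split},
\[\aug(\lambda_{\Sigma_d(F)}(u_i,u_i)) = \lambda_0(\pi_0u_i,\pi_0u_i) = Q_{N_k}(\beta_0\pi_0 u_i,\beta_0\pi_0u_i).\]
By the choice of the \(u_i\), the element \(\beta_0\pi_0 u_i\) is a standard hyperbolic generator of \(H(\Z)^{\oplus k}\), so its square is \(0\). The same holds for \(v_i\). Therefore both self-pairings are \(\equiv 0 \pmod 2\) in \(\wh{\Z}_2\), and so \(\wh{u}_i,\wh{v}_i\in E(\wh{M})\).

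This argument does not need the full strength of Proposition~\ref{prop: evenness}. Alternatively, one could note that \(u_i\) lies in \(\ker\eta\), since its projection to \(H_2(N)\) vanishes. Then \(\lambda(u_i,u_i)=r+\ol r\), and \(\varphi\) commutes with the involution because \((-1)^{-j}=(-1)^j\). This gives \(\wh\lambda(\wh u_i,\wh u_i)=2\varphi(\Pi_1 r)\), which is even, and it is the route I would present if the paper needs the refined quadratic values \(q\) later.

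The only delicate step is checking that \(\varphi\) is compatible with \(\Pi_1\) and with the involution; this uses that \(d\) is even and that \(g\mapsto -1\) respects \(g^{-1}\). Everything else is bookkeeping with the definitions from Subsection~\ref{subsec: formsonsubs}.
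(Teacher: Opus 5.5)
Your argument is correct, and your primary route differs from the paper's. The paper's proof is what you list as the alternative. It uses that $u_i\in\ker\eta$, invokes Proposition~\ref{prop: evenness} to write $\lambda_{\Sigma_d(F)}(u_i,u_i)=r+\ol r$, and then computes $\varphi(\Pi_1(r+\ol r))=2\varphi(\Pi_1 r)$. Your phrasing via ``$\varphi$ commutes with the involution'' is cleaner than the paper's coefficient bookkeeping.

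Your main route bypasses Proposition~\ref{prop: evenness} entirely. Since $\varphi\circ\Pi_1\equiv\aug\pmod 2$, you only need $\aug(\lambda_{\Sigma_d(F)}(u_i,u_i))$ to be even. That follows from the isometry $\beta_0$ of Proposition~\ref{prop: Z-split}, which is already needed to define the $u_i,v_i$, together with the fact that hyperbolic generators have square zero.

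What this buys is self-containment. You avoid the genus-zero evenness machinery behind Proposition~\ref{prop: evenness} and the identification $H_2(\Sigma_d(F))^{\Z_d}=\mathcal{N}\cdot H_2(\Sigma_d(F))$ it relies on. Your route also shows exactly what is needed here. For the hermitian element $\lambda(u_i,u_i)=\sum a_j g^j$, with $a_j=a_{-j}$, only the parity of $a_0+a_{d/2}$ matters. The $\Lambda_+$ condition forces $a_0$ and $a_{d/2}$ to be even separately, which is what the splitting theorem's condition~(3) requires, but it is more than this claim requires. The paper's route gains only the reuse of a condition it has already established.

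One small correction to your closing remark. On $E(\wh M)$ the quadratic form is $q(y)=\wh\lambda(y,y)/2$, computed in the torsion-free ring $\wh{\Z}_2$. Its values are therefore determined by $\wh\lambda$ alone and do not depend on how you prove evenness, so there is no reason to prefer the $\Lambda_+$ route for later use.
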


\begin{proof}
    Without loss of generality, we will prove that the elements~\(\wh{u}_i\) are contained inside~\(E(\wh{M})\). In particular, we will show that~\(\wh{\lambda}(\wh{u}_i, \wh{u}_i) \equiv 0 \Mod{2}\) for all~\(i\). 
    By definition, the basis elements~\(\{u_i\}\) in~\(H_2(\Sigma_d(F))\) lie in the kernel of the map~\(\eta\) defined in Proposition~\ref{prop: evenness}. Furthermore, by this proposition, we know that~\(\lambda_{\Sigma_d(F)}(u_i, u_i) \in \Lambda_+\). This means that for some integers~\(a_j\) we can write \begin{align*}
        \lambda_{\Sigma_d(F)}(u_i, u_i) &= \sum_{j=0}^{d-1} a_j g^j + \sum_{j=0}^{d-1} a_j g^{-j} \\
        &= 2a_0 +2a_{d/2}g^{d/2} +\sum_{j=1}^{d/2 -1} a_j (g^j+g^{-j}).
    \end{align*}
    Now, passing through~\(\pi_1\) and tensoring with~\(\wh{\Z}_2\), by the formula above, for some element~\(m\in \wh{\Z}_2\), with~\(x,y\in H_2(\Sigma_d(F))_1\) we get
    \begin{align*}
        \wh{\lambda}(\wh{u}_i, \wh{u}_i) &= m \varphi(\lambda_1(x,y)) \\
        &= m \varphi\Bigl(2a_0 +2a_{d/2}g^{d/2} +\sum_{j=1}^{d/2 -1} a_j (g^j+g^{-j}) \Mod{(\mathcal{N})}\Bigr) \\
     &= m\Bigl(2a_0 +2a_{d/2}(-1)^{d/2} +\sum_{j=1}^{d/2 -1} a_j ((-1)^j+(-1)^{-j}) \Mod{(\mathcal{N})}\Bigr)\\ 
        &= m\Bigl(2a_0 +2a_{d/2}(-1)^{d/2} +2\sum_{j=1}^{d/2 -1} a_j (-1)^j \Mod{(\mathcal{N})}\Bigr) \\ 
        &= 2m\Bigl(a_0 +a_{d/2}(-1)^{d/2} +\sum_{j=1}^{d/2 -1} a_j (-1)^j \Mod{(\mathcal{N})}\Bigr) \equiv 0 \Mod{2}.\qedhere
    \end{align*} 
\end{proof}

Thus we can tensor by~\(\mathbb{F}_2\) and obtain elements~\(\{\wt{u}_i,\wt{v}_i\}_{1\leq i \leq k}\) in~\(\wt{M}\). Let~\((W, \wt{\lambda}_W)\) be the symmetric submodule of~\((\wt{M},\wt{\lambda})\) generated by these elements.

\begin{claim*}
   ~\((W, \wt{\lambda}_W)\) is a hyperbolic submodule equivalent to~\(H(\mathbb{F}_2)^{\oplus k}\).
\end{claim*}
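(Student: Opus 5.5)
We need to show the Gram matrix of $\{\wt u_i,\wt v_i\}$ under $\wt\lambda$ is the standard hyperbolic matrix over $\F_2$, that is, $\wt\lambda(\wt u_i,\wt v_j)=\delta_{ij}$, $\wt\lambda(\wt u_i,\wt u_j)=0=\wt\lambda(\wt v_i,\wt v_j)$ for $i\neq j$, and $\wt\lambda(\wt u_i,\wt u_i)=\wt\lambda(\wt v_i,\wt v_i)=0$. Once the Gram matrix is the invertible hyperbolic matrix, the $2k$ elements are automatically linearly independent and span a nonsingular subspace isometric to $H(\F_2)^{\oplus k}$. The diagonal entries vanish because $\wh\lambda(\wh u_i,\wh u_i)\equiv 0 \Mod 2$, which is the previous claim. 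So everything reduces to computing the off-diagonal entries mod $2$.

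The key step is to compare the two augmentations $\Lambda\to\Z$, $g\mapsto 1$, and $\Lambda\to\wh\Z_2$, $g\mapsto -1$, modulo $2$. For $r=\sum a_j g^j\in\Lambda$ we have $\aug(r)=\sum a_j$ and $\varphi(\Pi_1(r))=\sum a_j(-1)^j$. These agree modulo $2$, since $(-1)^j\equiv 1 \Mod 2$. Here we must check that $\varphi$ is well defined on $\Lambda_1$: this holds because $d$ is even, so $\varphi(\mathcal N)=\sum_{j=0}^{d-1}(-1)^j=0$. Hence for any $y,y'\in H_2(\Sigma_d(F))$, writing $\hat y,\hat y'$ for their images in $\wh M$,
\[\wh\lambda(\hat y,\hat y')=\varphi(\lambda_1(\pi_1 y,\pi_1 y'))\equiv \aug(\lambda_{\Sigma_d(F)}(y,y'))=\lambda_0(\pi_0 y,\pi_0 y') \Mod 2.\]

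We then apply this to the lifts $u_i,v_i$. Since $\beta_0$ is an isometry (Proposition~\ref{prop: Z-split}) and the elements $\pi_0(u_i),\pi_0(v_i)$ are preimages of the standard hyperbolic basis of $H(\Z)^{\oplus k}$, we get $\lambda_0(\pi_0u_i,\pi_0v_j)=\delta_{ij}$ and $\lambda_0(\pi_0u_i,\pi_0u_j)=0=\lambda_0(\pi_0v_i,\pi_0v_j)$. Reducing these mod $2$ through the congruence above gives the required Gram matrix for $\wt u_i,\wt v_i$ in $\wt M=E(\wh M)\otimes\F_2$, where $\wt\lambda$ is $\wh\lambda$ reduced mod $2$.

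The main obstacle is bookkeeping rather than any conceptual difficulty. One must make sure $\wh\lambda$ restricted to $E(\wh M)$ and tensored with $\F_2$ is computed on the elements $\hat u_i\otimes 1$ exactly as above, and that this reduction is compatible with the quotient $E(\wh M)/2E(\wh M)$. One must also confirm that the Gram matrix being invertible over $\F_2$ gives a genuine subspace of rank $2k$. The subspace need not be an orthogonal summand a priori, but over a field a nonsingular subspace always splits off, which is what Lemma~\ref{lem: 2-adic Suffice} needs.
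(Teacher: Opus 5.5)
Your proposal is correct and is essentially the paper's argument: the paper's commutative diagram sends both $(H_2(\Sigma_d(F))_0,\lambda_0)$ and $(\wt{M},\wt{\lambda})$ to $(H_2(\Sigma_d(F))_d,\lambda_d)\otimes\mathbb{F}_2$, and this encodes exactly your observation that $g\mapsto -1$ and $g\mapsto 1$ agree modulo $2$, so $\wt{\lambda}$ on the $\wt{u}_i,\wt{v}_i$ is the mod-$2$ reduction of $\lambda_0$ on the hyperbolic basis of $H_0$. Your Gram-matrix computation is, if anything, more explicit than the paper's diagram chase, since it shows directly that the $2k$ elements are linearly independent and that $W\cong H(\mathbb{F}_2)^{\oplus k}$, without having to check that the right-hand vertical map in that diagram is well defined and form-preserving.
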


\begin{proof}
    By Proposition~\ref{prop: SpecificRim} (suppressing the distinguished points in the notation) we have the following commutative diagram of hermitian forms.
    \[\begin{tikzcd}
	{(H_2(\Sigma_d(F)), \lambda)} && {(H_2(\Sigma_d(F))_1, \lambda_1)} && {(\wt{M},\wt{\lambda})} \\
	\\
	{(H_2(\Sigma_d(F))_0, \lambda_0)} && {(H_2(\Sigma_d(F))_d, \lambda_d)} && {(H_2(\Sigma_d(F))_d, \lambda_d)\otimes \mathbb{F}_2}
	\arrow["{{\pi_1}}", two heads, from=1-1, to=1-3]
	\arrow["{{\pi_0}}"', from=1-1, to=3-1]
	\arrow[two heads, from=1-3, to=1-5]
	\arrow["{{\pi_{1d}}}", from=1-3, to=3-3]
	\arrow["{\pi_{1d} \otimes \mathbb{F}_2}"', from=1-5, to=3-5]
	\arrow["{{\pi_{0d}}}"', from=3-1, to=3-3]
	\arrow["{\otimes \mathbb{F}_2}"', from=3-3, to=3-5]
\end{tikzcd}\]
    Recall that~\(H_0\) is the hyperbolic submodule of~\((H_2(\Sigma_d(F))_0, \lambda_0)\) described above. By the construction of~\(W\), there is a subspace of~\((H_2(\Sigma_d(F)), \lambda)\) which maps to~\(W\) along the top two horizontal maps, and maps to~\(H_0\) by the leftmost vertical map in the diagram. Since this diagram commutes, this implies that~\(W\) maps onto~\(\pi_{0d}(H_0)\otimes \mathbb{F}_2\) via the rightmost vertical map. Since~\(H_0\) is a hyperbolic module isomorphic to~\(H(\Z)^{\oplus k}\), the result follows.
\end{proof}

Now in order to complete the proof, we must show that the Arf invariant of the quadratic refinement~\(\wt{q}_W\) restricted to this hyperbolic summand vanishes, or that the hyperbolic summand can be altered such that the Arf invariant vanishes. To do this, we further reduce the problem so we may work with nonsingular forms.

Indeed, by Proposition~\ref{prop: SpecificRim},~\((H_2(\Sigma_d(F))_1, \lambda_1)\) is a singular form with cokernel of the adjoint given by~\(\Lambda_1^{2g}\). This means that once we pass to~\(\wt{M}\), the kernel of the adjoint of~\(\wt{\lambda}\) will contain a subspace of dimension~\(2g\). This is because~\(\wt{M}\) is a finite dimensional vector space over~\(\F_2\), and so~\(\wt{M}\) and its dual space have the same dimension. Thus by the rank-nullity theorem, the kernel and cokernel of the adjoint have the same dimension.  

Suppose there exists an element~\(v \in \ker(\wt{\lambda})\) such that~\(\wt{q}(v)=1\). Then for all~\(x\in \wt{M}\), \begin{equation}\label{eq:quadbase}
    0=\wt{\lambda}(v,x) = \wt{q}(v+x) - \wt{q}(v) - \wt{q}(x) \implies \wt{q}(v+x) = \wt{q}(v) + \wt{q}(x) = 1 + \wt{q}(x). 
\end{equation}
If~\(\Arf(W, \wt{q}_W)= \sum^{k}_{i=1} \wt{q}(\wt{u}_i)\wt{q}(\wt{v}_i) = 1\), then there must exist a pair~\(\{u_j, v_j\}\) in the hyperbolic basis such that~\(\wt{q}(\wt{u}_j)\wt{q}(\wt{v}_j) = 1\). Hence~\(\wt{q}(\wt{u}_j)=\wt{q}(\wt{v}_j)=1\). Now replace the generator~\(v_j\) with~\(v_j+v\), (or~\(u_j\) with~\(u_j+v)\). Define~\(W'\) to be the subspace defined as the span of the hyperbolic basis elements, with this replacement. This is a hyperbolic subspace since~\(\wt{\lambda}(v,x)=0\) for any~\(x\in \wt{M}\). By \eqref{eq:quadbase},~\(W'\) satisfies
\begin{align*}
    \Arf(W', \wt{q}_{W'}) &= \sum_{i\neq j} \wt{q}(\wt{u}_i)\wt{q}(\wt{v}_i) + \wt{q}(\wt{u}_j)\wt{q}(\wt{v}_j +v) \\
    &=\sum_{i\neq j} \wt{q}(\wt{u}_i)\wt{q}(\wt{v}_i) + \wt{q}(\wt{u}_j)\wt{q}(\wt{v}_j) + \wt{q}(\wt{u}_j) \\
    & = \sum^{k}_{i=1} \wt{q}(\wt{u}_i)\wt{q}(\wt{v}_i) + \wt{q}(\wt{u}_j) \\
    & = 1 +1 \equiv 0 \Mod{2}.
\end{align*}
Thus, if such an element~\(v\) exists, we can always find a hyperbolic summand of~\((\wt{M},\wt{\lambda})\) whose Arf invariant vanishes and so we are done.
Therefore from now on we can assume that~\(\wt{q}(v) = 0 \) for all~\(v \in \ker(\wt{\lambda})\). Hence~\(\wt{q}\) descends to a well-defined quadratic form on~\(\wt{M}/\ker(\wt{\lambda})\), which for ease of notation we will also call~\(\wt{q}\). Its associated bilinear form is the form induced by~\(\wt\lambda\), which is nondegenerate on this quotient. Hence the induced quadratic form is nonsingular. Since our~\((W,\wt{\lambda}_W)\) is a hyperbolic summand, it is in particular nonsingular and maps isomorphically into a subspace of~\(\wt{M}/\ker(\wt{\lambda})\). 

Since~\(\wt{M}/\ker(\wt{\lambda})\) admits a nondegenerate quadratic form, its dimension must be even.
If~\(E(\wh{M})= \wh{M}\), then the dimension of the kernel of the adjoint will be~\(2g\), as noted above. Then~\(\dim(E(\wh{M})) = \dim(\wh{M}) = b_2(N) +2k+2g\) implies that~\(\dim(\wt{M}/\ker(\wt{\lambda})) = b_2(N)+2k\). Since this must be even,~\(0<b_2(N) \leq 2\) is forced to be~\(2\), so the dimension is~\(2+2k\). By the classification of nonsingular quadratic forms over~\(\F_2\),
\[(\wt{M}/\ker(\wt{\lambda}), \wt{q}) \cong \begin{cases}
    H^{k+1} & \text{if~\(\Arf(\wt{q}) = 0\)} \\
    H^{k}\oplus A & \text{if~\(\Arf(\wt{q}) = 1\)}
\end{cases}\] where~\(H\) is a quadratic hyperbolic summand, and~\(A\) is the unique (up to isometry) two dimensional anisotropic quadratic form over~\(\F_2\). Since we want to find a~\(2k\)-dimensional hyperbolic summand, in this case of~\(E(\wh{M}) =\wh{M}\) the Arf invariant does not matter. In either case above we can choose a~\(2k\)-dimensional hyperbolic summand, and conclude.

This leaves us with the case of~\(E(\wh{M}) \neq\wh{M}\), which means the dimension of~\(E(\wh{M})\) is necessarily one fewer than the dimension of~\(\wh{M}\). Indeed, define the linear functional \[f\colon \wh{M}\to \F_2; \quad y\mapsto \wh{\lambda}(y,y) \Mod{2}\] such that~\(\ker(f)\cong E(\wh{M})\). Thus by the rank-nullity theorem, we see that \[\dim(\wh{M}) = \dim(E(\wh{M})) + \dim(\im(f))\] where~\(\dim(\im(f))\) must be one, since if~\(\dim(\im(f))=0\), then~\(E(\wh{M}) =\wh{M}\). 
Thus~\(\dim(\wt{M}) = \dim(E(\wh{M})) = b_2(N) +2k+2g-1\). Hence \[\dim(\wt{M}/\ker(\wt{\lambda}))= b_2(N) +2k+2g-1 - \dim(\ker(\wt{\lambda}))\] must be even. Furthermore, we know that~\(\dim(\ker(\lambda'))\geq 2g\), and we know that~\(\wt{M}/\ker(\wt{\lambda})\) contains a subspace of dimension~\(2k\), so we get the range \[2k \leq \dim(\wt{M}/\ker(\wt{\lambda})) \leq b_2(N)+2k-1.\] This implies, since~\(0<b_2(N) \leq 2\) and~\(\dim(\wt{M}/\ker(\wt{\lambda}))\) is even, that~\(\dim(\wt{M}/\ker(\wt{\lambda}))=2k\).

This is the remaining case we must check.
Since~\(\dim(W) = 2k = \dim(\wt{M}/\ker(\wt{\lambda}))\), we get that~\(\Arf(W, \wt{q}_W) = \Arf(\wt{M}/\ker(\wt{\lambda}), \wt{q})\). The rest of the proof consists of showing that this Arf invariant is zero. 
First we show that we can alter the embedding of~\(F\) while preserving the Arf invariant. 
\begin{claim*}
   ~\(\Arf(\wt{M}/\ker(\wt{\lambda}), \wt{q})\) is invariant under internal and external stabilisation.
\end{claim*}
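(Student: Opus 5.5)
We need to show that \(\Arf(\wt{M}/\ker(\wt{\lambda}), \wt{q})\) does not change under the two stabilisation moves. Internal stabilisation replaces \(F\) by \(F \natural T\), adding an unknotted torus inside a ball. External stabilisation replaces \((N_k,F)\) by \((N_{k+1},F)\). The plan is to compute the effect of each move on the chain
\[
(H_2(\Sigma_d(F)),\lambda) \;\leadsto\; (H_2(\Sigma_d(F))_1,\lambda_1) \;\leadsto\; (\wh{M},\wh{\lambda}) \;\leadsto\; (E(\wh M),q) \;\leadsto\; (\wt{M},\wt{q}) \;\leadsto\; \wt{M}/\ker(\wt\lambda),
\]
and to check that each move adds an orthogonal summand whose contribution to the final nonsingular quadratic form is either zero or a hyperbolic \(H(\F_2)\), which has Arf invariant zero.

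\emph{Internal stabilisation.} By the boundary connected sum construction of Section~\ref{sec: branchsplit}, \(\Sigma_d(F\natural T)\cong \Sigma_d(F)\natural\Sigma_d(T)\). Mayer--Vietoris then gives an orthogonal sum
\[
(H_2(\Sigma_d(F\natural T)),\lambda) \cong (H_2(\Sigma_d(F)),\lambda)\oplus (H_2(\Sigma_d(T)),\lambda_T), \qquad H_2(\Sigma_d(T))\cong \Lambda_1^2 .
\]
The intersection form on \(\Sigma_d(T)\) vanishes, because \(T\) lies in \(B^4\) and its branched cover embeds in a standard way. One sees this for instance from \(H_2(\Sigma_d(T))\) being the image of \(H_2(X_T^d)\), which is generated by lifts of tori that can be pushed off one another. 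Hence the new summand lies in \(\ker(\wt{\lambda})\) after tensoring. On that summand \(\wt q\) vanishes, since \(\lambda_T(y,y)=0\) gives \(q(y)=0\). The summand is also contained in \(E(\wh M)\). Quotienting by the kernel therefore returns exactly the old quotient \(\wt{M}/\ker(\wt\lambda)\) with the same \(\wt q\), so the Arf invariant is unchanged.

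\emph{External stabilisation.} Here \(\Sigma_d(F)\) acquires \(d\) copies of \(S^2\times S^2\), one for each lift of the ball in which the sum is performed. So \(H_2\) gains an orthogonal summand \(H(\Lambda)\). Under \(\varphi\) this becomes \(H(\wh{\Z}_2)\), which is even and unimodular. It survives to \(\wt{M}/\ker(\wt\lambda)\) as an orthogonal summand \(H(\F_2)\) on which \(\wt q\) is the standard hyperbolic quadratic form, because the lifted spheres are embedded with trivial normal bundle. Additivity of the Arf invariant, together with \(\Arf(H)=0\), gives invariance.

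\emph{Main obstacle.} The quotient \(\wt{M}/\ker(\wt\lambda)\) is taken of the whole module, so I must check that the orthogonal decomposition is compatible with both the even submodule and the kernel. Concretely, I need
\[
E(\wh M\oplus \wh{M}') = E(\wh M)\oplus E(\wh{M}') \quad\text{and}\quad \ker(\wt\lambda\oplus\wt\lambda') = \ker(\wt\lambda)\oplus \ker(\wt\lambda').
\]
Both follow from orthogonality, since \(\wh\lambda(y+y',y+y')=\wh\lambda(y,y)+\wh\lambda(y',y')\). The care needed is that the added summand is entirely even: a nontrivial added summand would change the dimension count in the case \(E(\wh M)\ne\wh M\). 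For the torus summand I expect to verify that \(\lambda_T\equiv 0\) by an explicit pushoff argument, and this is the step I would write out in detail.
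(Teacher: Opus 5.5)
\textbf{Gap in the internal stabilisation step.} Your outline matches the paper's: external stabilisation adds an orthogonal $H(\Lambda)$, and internal stabilisation adds an orthogonal $\Lambda_1^2$ that disappears in $\wt M/\ker(\wt\lambda)$. Your external half is fine. The internal half, however, rests on a false claim: the intersection form of $\Sigma_d(T)$ does not vanish. $\Sigma_d(T)$ is simply connected with boundary $\Sigma_d(U)\cong S^3$, and $H_2(\Sigma_d(T))\cong\Lambda_1^2\cong\Z^{2(d-1)}\neq 0$. So $Q_{\Sigma_d(T)}$ is unimodular, and $\lambda_T$ is nonsingular by Proposition~\ref{prop: nonsingularity}. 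For instance, for $d=2$ it is the hyperbolic form $V+V^T$ with $V=\bsm 0&1\\0&0\esm$. Hence the pushoff argument you plan to write out cannot succeed, and the inference ``$\lambda_T(y,y)=0$, so $q(y)=0$'' has no basis. This is not cosmetic: the claim needs $\wt q$ to vanish on the new kernel elements, since otherwise $\wt q$ does not descend to the quotient and the Arf invariant there is not defined.

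\textbf{How to repair it.} The conclusion is still true, for an algebraic reason. On a $\Lambda_1$-summand, $\mathcal N$ acts by $0$, and $\varphi$ reduces mod $2$ to the augmentation. So for $y$ in the torus summand and any $x$, $\wh\lambda(x,y)\equiv\aug(\lambda(x,y))=Q(x,\mathcal N y)=0 \pmod 2$. Thus the summand is even and lies in $\ker(\wt\lambda)$.

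For $\wt q$, put $c_j=Q(e,g^je)$ for $e$ in the summand. Deck-invariance and symmetry give $c_j=c_{-j}$, and $\sum_j c_j=Q(e,\mathcal N e)=0$. Together these give $\wh\lambda(\wh e,\wh e)/2=-\sum_{j\ \mathrm{odd}}c_j$, which is $\equiv c_{d/2}\equiv c_0 \pmod 2$ when $d\equiv 2 \pmod 4$, and $\equiv 0$ when $4\mid d$.

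Finally, $c_0=Q(e,e)$ is even. The torus summand is $\mathcal N$-torsion, so it lies in $\ker(\pi_0)\subseteq\ker(\eta)$, and Proposition~\ref{prop: evenness} gives $\lambda(e,e)\in\Lambda_+$, which forces the $g^0$-coefficient $c_0$ to be even. Since $\wt\lambda$ vanishes on the new summand, $\wt q$ is linear there, so vanishing on the images of the elements $e$ suffices.

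With this in place of your pushoff step, the rest of your argument goes through. One further caution: $E(\wh M\oplus\wh M')=E(\wh M)\oplus E(\wh M')$ fails in general, because two odd elements add to an even one. It holds here only because the added summand is entirely even, as you yourself noted.
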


\begin{proof}
    First notice that the Arf invariant is invariant under external stabilisation of the base manifold~\(N_k\). Indeed, since these extra~\(S^2\times S^2\) summands can be added disjoint from~\(F\), the result is that we add one hyperbolic~\(\Lambda\)-module summand to~\((H_2(\Sigma_d(F)), \lambda_{\Sigma_d(F)})\). Since this was trivially attached, after passing to~\((\wt{M}/\ker(\wt{\lambda}), \wt{q})\), we now have an extra hyperbolic summand with vanishing Arf invariant.

    It is also invariant under internal stabilisation. Indeed, Proposition~\ref{prop: BranchedSplitting} and the related construction show that the effect of adding genus to the surface is~\(H_2(\Sigma_d(F)) \oplus \Lambda_1^{2}\). These means that~\(\dim(\ker(\wt{\lambda}))\) increases by~\(2\), but the quotient~\((\wt{M}/\ker(\wt{\lambda}), \wt{q})\) stays the same and hence the Arf invariant is unchanged.
\end{proof}

We will now construct a surface which has vanishing associated Arf invariant, and then show that this can be obtained from the surface~\(F\) by some number of stabilisations. Thus by the above claim we can conclude that~\(\Arf(\wt{M}/\ker(\wt{\lambda}), \wt{q})\) also vanishes.
Indeed, there exists some surface embedded in~\(N\), bounding~\(K\), and representing~\(x\). We have no control over the genus of this surface. Take~\(k\) external stabilisations of~\(N\) disjoint from the surface. Denote the surface inside~\(N_k\) by~\(F'\). We will show that the associated Arf invariant for~\(F'\) vanishes, and that~\(F'\) is stably equivalent to~\(F\).

\begin{claim*}
    The nonsingular quadratic form~\(\wt{q}'\) associated to~\(F'\) has vanishing Arf invariant.
\end{claim*}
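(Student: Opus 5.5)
The idea is that $F'$ is just a surface already sitting in $N$, stabilised externally by $k$ copies of $S^2\times S^2$ disjoint from it. I would therefore first identify the branched cover $\Sigma_d(F')$ as $\Sigma_d(F'\subset N)\#^{k}\bigl(\#^d (S^2\times S^2)\bigr)$. The $S^2\times S^2$ summands lie in the complement of $F'$, so each lifts to $d$ disjoint copies, freely permuted by $\Z_d$. Hence, as pointed hermitian forms,
\[(H_2(\Sigma_d(F')),\lambda,z)\cong (H_2(\Sigma_d(F'\subset N)),\lambda_N,z)\oplus H(\Lambda)^{\oplus k}.\]
Both constructions ($\pi_1$, then $\otimes\wh{\Z}_2$, restriction to the even part, $\otimes\F_2$, quotient by the radical) are compatible with orthogonal sums. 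So $(\wt{M}'/\ker(\wt{\lambda}'),\wt{q}')$ splits as the corresponding form for $F'\subset N$ plus $k$ copies of $H(\F_2)$ with its standard quadratic refinement. Each copy comes from the hyperbolic basis of $H(\Lambda)$, whose elements have $\lambda(e,e)=0$, so $q=0$ on them, and each copy has Arf invariant zero.

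It then remains to show that the contribution from $F'\subset N$ vanishes, i.e.\ that after quotienting by the radical nothing is left. We are in the case $E(\wh{M})\neq \wh{M}$, so I would run the dimension count from the preceding discussion for $F'\subset N$ with $k=0$. It gives
\[0\le \dim(\wt{M}_N/\ker)\le b_2(N)-1\le 1,\]
and the dimension must be even, so it is zero. The Arf invariant of the zero form is $0$, and additivity gives $\Arf(\wt{q}')=0$.

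Two points need care here. First, the parity dichotomy $E(\wh{M})\neq\wh{M}$ must hold for $F'\subset N$ as well as for $F'\subset N_k$. This follows because adding $H(\Lambda)$ summands does not change whether $\wh{\lambda}(y,y)$ is always even: hyperbolic summands are even, and mixed terms drop out modulo 2. Second, the kernel of $\wt{\lambda}$ on the sum is the sum of the kernels, since the hyperbolic part is nonsingular. Finally, the dimension bound for $F'\subset N$ uses $\dim\ker\ge 2g'$. This follows exactly as before from Proposition~\ref{prop: SpecificRim} together with Proposition~\ref{prop: BranchedSplitting} applied to $F'$ (with $g'$ its genus).

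The main obstacle is justifying the identification of $\Sigma_d(F')$ with the split form. The $\Lambda$-module splitting is clear geometrically. The subtler point is checking that the distinguished point $z$ lies in the $N$-part and that the map $\varphi$ to $\wh{\Z}_2$ sends the $H(\Lambda)$ summand to an honest hyperbolic $H(\wh{\Z}_2)$ with $q$ vanishing on a hyperbolic basis. I would verify this directly from the definition of $\lambda$ in~\eqref{eq: defHermitian} on the lifted spheres $S^2\times\{pt\}$ and $\{pt\}\times S^2$, whose self-intersections are $0$.
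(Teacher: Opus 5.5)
Your proposal follows the paper's route: the $k$ external stabilisations disjoint from $F'$ split off $H(\Lambda)^{\oplus k}$, which becomes Arf-zero hyperbolic planes in $(\wt M'/\ker(\wt\lambda'),\wt q')$. Your $k=0$ dimension count for the $N$-part spells out the step the paper compresses into ``after reducing, the Arf invariant also vanishes''. The one unjustified link is that the case $E(\wh M)\neq\wh M$ was established for $F$, not $F'$; it transfers because $\wh\lambda(\wh y,\wh y)\equiv Q_{N_k}(p_*y,p_*y) \pmod 2$ (since $\varphi(g)=-1$) and $p_*$ is surjective, so this case occurs exactly when $Q_N$ is odd, independently of the surface.
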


\begin{proof}
    By the construction, we have a pointed isometry \[(H_2(\Sigma_d(F')), \lambda'_{\Sigma_d(F)}, z') \cong (P,\lambda',z') \oplus H(\Lambda)^{\oplus k}\] where~\((P,\lambda',z')\) is some pointed Hermitian form over~\(\Lambda\) and~\(z'\) is the analogue of~\(z\) for~\(F'\). That is, the fixed absolute class corresponding to~\(p_*^{-1}([F'])\), where~\(p\) is the covering map.

    Each copy of~\(S^2\times S^2\) was added trivially to the branched cover, and so the action of~\(\Z_d\) on~\(H_2(\Sigma_d(F))\) restricts to the identity on each~\(H(\Lambda)\) summand. Thus the~\(\wh{\Z}_2\)-module~\(\wh{M}' := H_2(\Sigma_d(F'))_1 \otimes_{\Lambda_1} \wh{\Z}_2\) contains a hyperbolic summand equivalent to~\(H(\wh{\Z}_2)^{\oplus k}\). By Lemma~\ref{lem: 2-adic Suffice} this means that the equivalently defined~\(\F_2\)-module~\(\wt{M}'\) and associated form~\(\wt{\lambda}'\) contains a hyperbolic summand~\(H(\F_2)^{\oplus k}\) with vanishing Arf invariant. Thus, after reducing to~\((\wt{M}'/\ker(\wt{\lambda}'), \wt{q}')\), the Arf invariant also vanishes as required. 
\end{proof}

Now, stabilise~\(F\) internally so that~\(g(F)= g(F')\). Hence since~\(F\) and~\(F'\) have the same genus, representing the same class, in the same ambient~\(4\)-manifold, Theorem~\ref{thm: Unique} shows that there is a homeomorphism of pairs~\((N_k\# (S^2\times S^2), F)\cong (N_k\# (S^2\times S^2), F')\). The isometry of intersection forms induced by this homeomorphism, plus the fact that Arf is invariant under external stabilisations implies that the Arf invariant induced by~\(F'\) is the same as the one induced by~\(F\).

So, since the Arf invariant~\(\Arf(\wt{M}'/\ker(\wt{\lambda}'), \wt{q}')\) associated to~\(F'\) vanishes, then by the above argument,~\(\Arf(\wt{M}/\ker(\wt{\lambda}), \wt{q})\) and hence~\(\Arf(W,\wt{q}_W)\) vanishes as well.
This completes the proof of the existence of a hyperbolic submodule in each case.
\end{proof}

We have proven that condition~\((4)\) holds as required. Hence, by combining the work of Sections~\ref{sec: branchsplit}, \ref{sec: Zsplit}, \ref{sec: even}, \ref{sec: Cond5}, we have proven Proposition~\ref{prop: suff} and hence Theorem~\ref{thm: main}.

\section{Uniqueness}\label{sec: Unique}

Here we prove the uniqueness statement (Theorem~\ref{thm: UniqueIntro}) from the introduction. First we prove the stable uniqueness result which we needed in Sections~\ref{sec: branchsplit} and \ref{sec: Cond5}. The proof of this result arose from conversations with Simeon Hellsten.
\begin{theorem}\label{thm: StableUnique}
    Let~\(N\) be a compact, simply-connected~\(4\)-manifold with~\(\partial N = S^3\). Suppose there exists two compact, orientable surfaces~\(F_1\) and~\(F_2\) of genus~\(g>0\) properly, locally flatly embedded in~\(N\) such that \begin{enumerate}
        \item~\(\partial F_1 = \partial F_2 =: K \subseteq S^3 = \partial N\),
        \item~\([F_1]=[F_2]= dy \in H_2(N,\partial N)\)a nonzero class of divisibility~$d$, and
        \item~\(\pi_1(N\backslash \nu F_1) \cong \Z_d \cong \pi_1(N\backslash \nu F_2)\).
    \end{enumerate}
    Then there exists a homeomorphism of pairs~\((N\#^n(S^2\times S^2),F_1) \cong(N\#^n(S^2\times S^2),F_2)\) for some~\(n\in\Z_{\geq 0}\).
\end{theorem}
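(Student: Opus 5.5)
We argue via stable classification of the exteriors. Set \(X_i := N\setminus \nu F_i\) for \(i=1,2\). By (3), \(\pi_1(X_i)\cong \Z_d\), generated by a meridian of \(F_i\). Each \(X_i\) has boundary \(\partial X_i = (S^3\setminus \nu K)\cup_{\partial} (F_i\times S^1)\), glued along the boundary torus of the knot exterior.

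The first step is to fix a homeomorphism \(\partial X_1\cong\partial X_2\). We take it to be the identity on the knot exterior \(S^3\setminus\nu K\). On \(F_1\times S^1\to F_2\times S^1\) we take the product of a homeomorphism \(F_1\cong F_2\) restricting to the identity on \(K\) with the identity on \(S^1\). Here we use the unique framings of \(\nu F_i\) which restrict to the Seifert framing on \(K\); such framings exist since the normal bundle of a surface with boundary is trivial. This identification carries meridians to meridians.

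Next we compare the two exteriors as bordant manifolds over \(B\Z_d\). Glue \(X_1\cup_\partial -X_2\) along this identification to get a closed \(4\)-manifold \(W\), together with a map \(c\colon W\to B\Z_d\). The map \(c\) exists because the meridians are identified and both classifying maps are determined by the meridian. We then apply Kreck's modified surgery: \(X_1\) and \(X_2\) become homeomorphic rel.\ boundary after stabilising by copies of \(S^2\times S^2\), provided they have the same normal \(1\)-type and \((W,c)\) is null-bordant in the corresponding bordism group. The normal \(1\)-type is \(B\Z_d\times B\mathrm{TopSpin}\) or \(B\Z_d\times B\mathrm{STop}\), according to whether the universal covers are spin.

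The bordism computation is the main obstacle. For the non-spin type we need \((W,c)\) to vanish in \(\Omega_4^{\mathrm{STop}}(B\Z_d)\cong \Omega_4^{\mathrm{STop}}\oplus H_4(\Z_d)=\Z\oplus\Z_2\). Since \(H_4(\Z_d)=0\), the \(\Z\oplus\Z_2\) is the signature together with the Kirby--Siebenmann invariant. The signature of \(W\) vanishes by Novikov additivity, provided \(\sigma(X_1)=\sigma(X_2)\). This holds because both are computed from \(\sigma(N)\), the class \(x\cdot x\) and the twisted signatures of the boundary. Alternatively, as in the earlier sections of the paper, one can use the \(\Z[\Z_d]\)-forms of the universal covers: these are determined up to stable isometry by the common data of the class \(dy\) and the knot \(K\). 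For the Kirby--Siebenmann invariant, note that \(\ks(X_i)=\ks(N)\). If the two differ only in how \(\ks\) distributes over the glued pieces, we fix this by replacing \(N\) with \(*\C P^2\)-type pieces, or we use \(\#(S^2\times S^2)\)-stabilisation and the stable homeomorphism statement. In the spin case we need vanishing in \(\Omega_4^{\mathrm{TopSpin}}(B\Z_d)\), computed via the Atiyah--Hirzebruch spectral sequence. The potential extra contributions from \(H_2(\Z_d;\Z_2)\), \(H_3(\Z_d;\Z_2)\) and \(H_4(\Z_d)\) must be shown to vanish for \(W\). In the paper's set-up we may also reduce to the non-spin case, as is done in Section~\ref{sec: branchsplit}, by connect-summing with \(\C P^2\) away from the surfaces. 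Keeping the stable classification rather than a genuine one, this non-spin reduction is what I would try first. If the bordism class is obstructed, I would modify the identification of \(\partial X_1\) with \(\partial X_2\) by a boundary homeomorphism, for instance a different mapping class of \(F\), to kill the obstruction.

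Finally, once Kreck's theorem gives a homeomorphism \(X_1\#^n(S^2\times S^2)\cong X_2\#^n(S^2\times S^2)\) extending the chosen boundary identification, we extend it over \(\nu F_1\cong F_1\times D^2\to F_2\times D^2\) using the product map above. This yields a homeomorphism of pairs \((N\#^n(S^2\times S^2),F_1)\cong(N\#^n(S^2\times S^2),F_2)\), with the stabilisations performed in the exteriors.
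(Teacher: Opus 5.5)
Your route differs from the paper's. There, Hellsten supplies a concordance rel boundary $Y\subset N\times I$ from $F_1$ to $F_2$. One then checks $\pi_1(N\times I\setminus\nu Y)\cong\Z_d$ and trades handles to get only $2$- and $3$-handles, with the $2$-handles attached along null-homotopic loops. A spin argument then removes the twisted summands. In that approach the boundary identification and the bordism both come for free from $Y$. Kreck's modified surgery is a legitimate framework, but you then have to supply both yourself, and the proposal does not.

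\textbf{The framing step.} As stated it is wrong. Every framing of $\nu F_i$ restricts on $K$ to the framing differing from the Seifert framing by $x\cdot x$, so none restricts to the Seifert framing unless $x\cdot x=0$. Framings are also not unique: they form a torsor over $H^1(F;\Z)\cong\Z^{2g}$. This matters for the map $c$. The push-off $\gamma\times\{pt\}$ of a curve $\gamma\subset F_i$ has a label in $\pi_1(X_i)\cong\Z_d$, and this label changes by $\phi(\gamma)$ when the framing is changed by $\phi$. So the classifying maps on $\partial X_i$ are not determined by the meridian. The map $c\colon W\to B\Z_d$ exists only after you choose framings making these labels agree, for instance all zero; this is consistent on $K$ because $x\cdot x=d^2\,y\cdot y\equiv 0$ mod $d$. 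In the spin case the boundary identification must also match spin structures. For $d$ even the label condition already fixes the framings mod $2$, so this becomes an extra Arf-type condition comparing $F_1$ and $F_2$, which you do not address.

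\textbf{Normal $1$-types.} More seriously, you only carry out the case where the universal covers are non-spin.
\begin{enumerate}
\item Your list misses the case where $X_i$ is non-spin but its universal cover is spin. This occurs when $d$ is even, $N$ is non-spin and $y$ is characteristic, e.g.\ $x=2h$ in punctured $\C P^2$. The normal $1$-type is then a twisted fibration over $B\Z_d$, and you would need the corresponding twisted spin bordism group.
\item In the spin case you only name the Atiyah--Hirzebruch terms that ``must be shown to vanish''. They can be killed by the $d_2$-differentials dual to $Sq^2$, but this has to be done.
\item Connect-summing with $\C P^2$ does not prove this statement. It gives a homeomorphism after adding $\C P^2$ summands, which cannot be cancelled and is not the $\#^n(S^2\times S^2)$-stable equivalence claimed. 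The paper uses that trick in Section~\ref{sec: branchsplit} only to compute a module stably, where an extra $\Lambda$ summand is harmless.
\item The alternative via the $\Z[\Z_d]$-forms of the branched covers is circular, since Proposition~\ref{prop: BranchedSplitting} is proved using this very theorem.
\end{enumerate}

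\textbf{The non-spin case.} Here your argument does work once the framings are fixed, and more simply than you suggest. First, $\sigma(X_1)=\sigma(X_2)$, since both equal the signature of $Q_N$ restricted to $x^\perp$; no twisted signatures are needed. Second, $\ks(W)=\ks(X_1)+\ks(X_2)=2\ks(N)=0$ directly.
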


\begin{proof}
    The proof involves generalising the first half of the proof of Theorem~\(7.2\) of \cite{Sunukjian}, using work of Hellsten.
    Indeed, by the hypotheses on the surfaces, and since~\(H_2(N,\partial N)\cong H_2(N)\), \cite{Hellsten} gives the existence of a concordance rel.\ boundary $Y \subset N \times I$ from $F_1$ to $F_2$.
    
    By the construction of $Y$ in Sections 7 and 9 of \cite{Hellsten}, for~\(W:=N \times I \backslash \nu Y\), $\pi_1(W)$ is cyclic. By the Thom isomorphism, $H_2(N \times I, W) \cong H_0(Y) \cong \Z$. Applying this to the long exact sequence of the pair $(N\times I, W)$ gives the exact sequence
    \begin{equation*}
        H_2(N \times I) \xrightarrow{ Q_N(-,[Y])} \Z \xrightarrow{\partial} H_1(W) \to H_1(N \times I) = 0,
    \end{equation*}
    where the first map is intersection with $Y$ under the~\(\Z\)-valued intersection form~\(Q_N\). Thus~\( H_1(W)\) is identified as~\(\Z\) quotiented by the image of~\(H_2(N \times I) \xrightarrow{ Q_N(-,[Y])} \Z.\) By utilising the isomorphism~\(H_2(N\times I)\cong H_2(N)\), and since~\(Y\) is a concordance between~\(F_1\) and~\(F_2\) such that~\([F_i]=dy\), we get that~\(\im(H_2(N \times I) \xrightarrow{ Q_N(-,[Y])} \Z) \cong d \Z\). Thus~\(H_1(W) \cong \Z_d\).    
    Hence since~\( \pi_1(W)\) is cyclic, we get that
    \begin{equation*}
        \pi_1(W) \cong H_1(W) \cong \Z_d,
    \end{equation*}
    generated by a meridian. 
    
    The argument concludes similarly to the proof in \cite{Sunukjian}. Since $\pi_1(N \backslash{F_i}) \cong \Z_d$ is generated by a meridian as well, the inclusions $N\backslash F_i \hookrightarrow W$ induce isomorphisms on $\pi_1$.
    By a Mayer--Vietoris calculation, and a standard geometric handle trading argument based on the proof of the~\(s\)-cobordism theorem, we can reproduce the reasoning in \cite[Proof of Theorem~\(7.2\)]{Sunukjian} to show that $W$ is a cobordism rel.\ boundary, which admits a handle decomposition rel.\ $(N \backslash F_1) \times \{0\}$ with only 2- and 3-handles, and where the 2-handles are attached along null-homotopic loops. 
    Hence the middle level of the cobordism is formed by stabilising either end, and so it follows that we get a stable homeomorphism \[N \backslash{F_1} \#^a (S^2\times S^2) \#^b (S^2\wt{\times} S^2) \cong N \backslash{F_2} \#^c (S^2\times S^2)\#^d (S^2\wt{\times} S^2)\] for some~\(a,b,c,d\in \Z_{\geq 0}\) which restricts to the identity map on the boundary. We claim that we can write this homeomorphism using only untwisted~\(S^2\times S^2\) summands. If~\(N\) is spin, then~\(W= N \times I \backslash \nu Y\) and the complements~\(N\backslash F_i\) must be spin, so the twisted summands cannot occur. Indeed, by a calculation using the naturality of Stiefel-Whitney classes, since~\(W\) is spin, the middle level of the cobordism must be spin as a subspace. Since~\(N\backslash F_i\) is spin, the presence of twisted summands would imply that the middle level is nonspin, which is a contradiction.
    If~\(N\) is nonspin, then~\(N\backslash F_i\) is spin if and only if~\([F_i]\) is characteristic. Since~\([F_1] = [F_2]\), the complements are either both spin or both nonspin. If they are nonspin, then use the homeomorphism~\(N\backslash F_i \# S^2 \wt{\times}S^2 \cong N \backslash F_i \# S^2\times S^2\) to obtain a stable homeomorphism without twisted summands. If the complements are spin, we must show that~\(W\) is spin. This, as before, shows that no twisted summands can occur. Indeed, by the Universal Coefficient theorem, with~\(i\colon N\backslash F_i \into W\), we get the following commutative diagram. 
    \[\begin{tikzcd}
	0 & { \Ext_\Z^1(H_1(W),\Z_2)} & {H^2(W;\Z_2)  } & {\Hom_\Z(H_2(W),\Z_2)} & 0 \\
	0 & {\Ext_\Z^1(H_1(N\backslash F_i),\Z_2)} & {H^2(N\backslash F_i;\Z_2)  } & {\Hom_\Z(H_2(N\backslash F_i),\Z_2)} & 0
	\arrow[from=1-1, to=1-2]
	\arrow[from=1-2, to=1-3]
	\arrow["\cong"', from=1-2, to=2-2]
	\arrow[from=1-3, to=1-4]
	\arrow["{i^*}"', from=1-3, to=2-3]
	\arrow[from=1-4, to=1-5]
	\arrow[from=1-4, to=2-4]
	\arrow[from=2-1, to=2-2]
	\arrow[from=2-2, to=2-3]
	\arrow[from=2-3, to=2-4]
	\arrow[from=2-4, to=2-5]
    \end{tikzcd}\]
    The left vertical map is an isomorphism since~\(i\) induces an isomorphism~\(H_1(W)\cong H_1(N\backslash F_i) \cong \Z_d\). The vertical map on the right is induced by the map~\(i_*\colon H_2(N \backslash F_i) \to H_2(W)\). 
    By relative Alexander duality, \[H_n(W,N\backslash F_i) = H_n(N\times I\backslash Y,N\backslash F_i) \cong H^{4-n}(Y,F_i).\]
    This vanishes for all~\(n\) since~\(Y\) is a concordance between~\(F_1\) and~\(F_2\). Thus by the long exact sequence of the pair~\((W,N\backslash F_i)\), the map~\(i_*\) is an isomorphism. Hence by the Five Lemma applied to the above diagram,~\(i^*\) is injective. Since~\(N\backslash F_i\) is spin, by naturality of the second Stiefel--Whitney class,~\(i^*(w_2(W))= w_2(N\backslash F_i) =0\). Thus by injectivity,~\(w_2(W)=0\), so~\(W\) is spin as required. Hence there cannot exist any twisted summands in the stable homeomorphism by the same reasoning as before.

    Then in every case we get that \[N \backslash{F_1} \#^n (S^2\times S^2) \cong N \backslash{F_2} \#^n (S^2\times S^2) \] for some~\(n\in \Z_{\geq 0}\) which restricts to the identity map on the boundary.
    Since~\(F_1\) and~\(F_2\) are homeomorphic, and represent the same homology class, we can find an isomorphism~\(H\colon \nu F_1 \xrightarrow{\cong} \nu F_2\) which is compatible with this stable homeomorphism of the complements. In particular, since the surfaces~\(F_i\) have boundary and hence~\(H^2(F_i) =0\), the normal bundles~\(\nu F_i\) are trivial. Fix an orientation on~\(F_i\) and choose some trivialisation~\(\nu F_i \cong F_i \times D^2\). A different framing of the normal bundle is given by a map~\(\Sigma_{1,g} \to \SO(2)\), and so the homotopy class of framings is given by \[[\Sigma_{1,g}, \SO(2)] \cong [\Sigma_{1,g}, S^1] \cong [\vee_{i=1}^{2g} S^1, S^1] \cong \pi_1(\vee_{i=1}^{2g} S^1; \Z) \cong  \Z^{2g}.\]
    These framings correspond to choices of framings on each curve generating the first homology of the surface. A choice of framing on~\(\nu F_i\) gives an induced framing on the normal bundle of the knot boundary. We can take a~\(0\)-framing of the boundary knot~\(K\) inside~\(S^3 = \partial N\) using the sections coming from the orientation induced by~\(S^3\) and a Seifert surface for~\(K\). With respect to this canonical~\(0\)-framing of~\(\nu K\), the induced framing of~\(\nu F_i|_\partial\) is given by the relative normal Euler number of~\(F_i\) in~\(N\). In this case, the relative normal Euler number is given by the self intersection of~\([F_i] \in H_2(N,\partial N)\). In other words, the integer~\([F_i]\cdot[F_i]\) determines completely how a framing of~\(\nu F_i\) restricted to the normal bundle of the boundary differs from the~\(0\)-framing of the boundary knot. Notice that this integer does not depend on which class of framing of~\(\nu F_i\) in~\(\Z^{2g}\) we chose, so every induced framing on the boundary is equivalent.
    
    So for any choice of framings on~\(\nu F_i\) the unique framings on~\(\nu F_i|_\partial\) will agree. 
    Thus we can find a homeomorphism between~\(\nu F_1\) and~\(\nu F_2\) which restricts to the given maps on the boundaries~\(\partial(N \backslash \nu F_i) = S^3\backslash \nu(\partial F_i) \cup F_i\times S^1\).
    Therefore we can combine both the stable homeomorphism on the complements and the identification of the normal bundles to get the required stable equivalence between~\(F_1\) and~\(F_2\)
\end{proof}

Using this we can further generalise the work of Sunukjian to obtain a destabilised uniqueness result, similar to \cite[Theorem~\(1.2\)]{LWCommentarii} or \cite[Theorem~\(4.5\)]{HamKreck}. Note that while Sunukjian claims that this method gives topological isotopy between the surfaces, we only claim it gives equivalence.

\begin{theorem}\label{thm: Unique}
    Suppose that~\(N\) and~\(F_1\),~\(F_2\) are as in the Theorem~\ref{thm: StableUnique}. Furthermore, suppose that~\(H_1(\Sigma_d(K))=0\) and~\(b_2(N)>\vert\sigma(N)\vert +2\). If $$b_2(N) + 2g-2\geq \max_{0\leq j<d}\left\vert \sigma (N)-\frac{2j(d-j)}{d^2}\, x\cdot x+\sigma_K (e^{\frac{2\pi i j}{d}})\right\vert$$ then there is a homeomorphism of pairs~\((N,F_1) \cong (N,F_2)\).
\end{theorem}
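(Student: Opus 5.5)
The plan is to destabilise the stable equivalence of Theorem~\ref{thm: StableUnique}, following Sunukjian's strategy and the arguments of Lee--Wilczy\'nski \cite{LWCommentarii} and Hambleton--Kreck \cite{HamKreck}.

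First, Theorem~\ref{thm: StableUnique} gives a homeomorphism of pairs \((N_n, F_1)\cong (N_n, F_2)\), where \(N_n = N\#^n(S^2\times S^2)\) and the extra summands are disjoint from both surfaces. Equivalently, there is a homeomorphism rel boundary of the complements
\[X_1\#^n(S^2\times S^2)\cong X_2\#^n(S^2\times S^2),\qquad X_i:=N\backslash \nu F_i,\]
compatible with the identification of the normal bundles constructed at the end of that proof. The task is to cancel these \(n\) summands one at a time. I will do this on the level of the \(\Z_d\)-covers, using the branched covers \(\Sigma_d(F_i)\) and their hermitian forms over \(\Lambda\).

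By Sections~\ref{sec: branchsplit}--\ref{sec: Cond5}, each \(H_2(\Sigma_d(F_i))\) has the form \(\Lambda^{b_2(N)}\oplus\Lambda_1^{2g}\), with the pointed structure described in Proposition~\ref{prop: suff}. The signature inequality with \(2g-2\) in place of \(2g\) says that \(F_i\) could be compressed to a simple surface of genus \(g-1\): apply Theorem~\ref{thm: main} with genus \(g-1\). I would then like the splitting theorem \cite[Theorem~\(3.1\)]{LWGenus}, fed with this extra room, to show that \(H_2(\Sigma_d(F_i))\) already splits off a hyperbolic \(H(\Lambda)\) summand or a \(\Lambda_1\)-hyperbolic piece, orthogonal to \(z\). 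This is where the hypothesis \(b_2(N)>|\sigma(N)|+2\) enters. It forces the \(\Z\)-form \(Q_N\), with \(x\) marked, to contain \(H(\Z)\oplus H(\Z)\) orthogonal to \(x\) by Wall's theorem. Indefiniteness with rank margin is exactly what a cancellation theorem over \(\Lambda\) needs: Bass-type cancellation \(H(\Lambda)\oplus A\cong H(\Lambda)\oplus B\Rightarrow A\cong B\) once \(A\) contains \(H(\Lambda)\), in the form used by Hambleton--Kreck for finite cyclic fundamental group.

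The key step is then a cancellation argument for the complements, \(X_1\#(S^2\times S^2)\cong X_2\#(S^2\times S^2)\Rightarrow X_1\cong X_2\) rel boundary. I would argue by induction on \(n\), as in \cite[Theorem~\(4.5\)]{HamKreck}. Given the homeomorphism \(\Phi\) after \(n\) stabilisations, the image under \(\Phi\) of the standard \(S^2\times S^2\) summand in \(\pi_2(X_1\#^n)\cong H_2\) of the cover gives a hyperbolic pair in \(\pi_2(X_2\#^n)\). The algebraic cancellation, applicable because of the hyperbolic summand found above, gives an isometry of \(\pi_2(X_2\#^n)\) that preserves the boundary data and carries this pair onto the standard last summand. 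Each such isometry is generated by transvections supported in hyperbolic summands, so it is realised by a self-homeomorphism of \(X_2\#^n(S^2\times S^2)\) rel boundary (Quinn/Freedman realisation for good \(\pi_1=\Z_d\)). Composing, we may assume \(\Phi\) carries the last \(S^2\times S^2\) onto the last one up to homotopy. The sphere embedding theorem then makes the summands ambiently agree, and \(\Phi\) descends to \(X_1\#^{n-1}\cong X_2\#^{n-1}\) rel boundary.

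After reaching \(n=0\), I glue back the normal bundle identification from Theorem~\ref{thm: StableUnique}. This gives \((N,F_1)\cong(N,F_2)\).

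The main obstacle is the cancellation step. I must check that the stabilisation bound needed by the Hambleton--Kreck cancellation, namely an \(H(\Lambda)\) summand in the \(\Lambda\)-form orthogonal to the pointed element with enough rank left over, is guaranteed by the two hypotheses. I must also deal with the non-free \(\Lambda_1^{2g}\) part. I would try first to prove, by the same splitting-theorem argument as in the proof of Theorem~\ref{thm: main} but with genus \(g-1\), that \(H_2(\Sigma_d(F_i))\cong(\cdots)\oplus H(\Lambda)\) pointedly. The \(2g-2\) margin is what I expect to make this work.
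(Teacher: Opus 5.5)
There is a genuine gap, at the step you yourself flag as the main obstacle. Any cancellation you invoke needs one of the two complements to already contain an $S^2\times S^2$ summand. This holds for Hambleton--Kreck, and also for your own transvection induction, which at $n=1\to 0$ needs a geometrically realised $H(\Lambda)$ in $\pi_2(X_1)$ or $\pi_2(X_2)$. Your proposal never produces this summand; it is left as an expectation. The route you suggest for it also does not work, for four reasons:
\begin{itemize}
\item Theorem~\ref{thm: main} with genus $g-1$ gives \emph{some} simple genus-$(g-1)$ surface in $N$, not a compression of $F_i$. For $g=1$ it does not apply at all, since Theorem~\ref{thm: main} needs positive genus.
\item The splitting theorem ``with genus $g-1$'' does not fit $H_2(\Sigma_d(F_i))$, which carries $\Lambda_1^{2g}$.
\item Internally stabilising a genus-$(g-1)$ surface adds $\Lambda_1^2$ to the branched cover (Section~\ref{sec: branchsplit}), not $H(\Lambda)$.
\item A ``$\Lambda_1$-hyperbolic piece'' would not give an $S^2\times S^2$ summand of the complement at all.
\end{itemize}
Separately, your inductive cancellation makes two unjustified claims: that boundary-preserving isometries of $\pi_2(X_2\#^n(S^2\times S^2))$ are realised by self-homeomorphisms rel boundary, and that summands which agree up to homotopy can be made to agree ambiently. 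You should quote \cite[Theorem~B$'$, Corollary~3.6]{HamKreck} rather than re-derive it.

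The missing idea is to spend the $2g-2$ margin on the ambient manifold rather than on the genus, and to compare $F_1$ and $F_2$ with a model surface.
\begin{enumerate}
\item Since $b_2(N)>|\sigma(N)|+2$, Freedman's classification gives $N\cong N_0\#(S^2\times S^2)$. Then $b_2(N)+2g-2=b_2(N_0)+2g$ and $\sigma(N_0)=\sigma(N)$, so the hypothesis is exactly the genus-$g$ inequality for $N_0$.
\item After using \cite{WallDiffeomorphisms} to move $x$ into $H_2(N_0)\oplus 0$, Theorem~\ref{thm: main} applied in $N_0$ gives a simple genus-$g$ surface $S\subset N_0$ with boundary $K$ representing $x$. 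Its complement $N\setminus\nu S\cong (N_0\setminus\nu S)\#(S^2\times S^2)$ visibly has the spare summand.
\item Theorem~\ref{thm: StableUnique} applied to $F_i$ and $S$ gives $(N\setminus\nu F_i)\#^n(S^2\times S^2)\cong (N\setminus\nu S)\#^n(S^2\times S^2)$ rel boundary.
\item Hambleton--Kreck cancellation applies, since $\pi_1\cong\Z_d$ is finite and one side has the form $X_0\#(S^2\times S^2)$. It yields $N\setminus\nu F_i\cong N\setminus\nu S$ rel boundary.
\item Regluing the normal bundles as in the proof of Theorem~\ref{thm: StableUnique} gives $(N,F_i)\cong(N,S)$ for $i=1,2$, and hence $(N,F_1)\cong(N,F_2)$.
\end{enumerate}
No splitting of $H_2(\Sigma_d(F_i))$ is needed.

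If you want to keep your route, the margin must be used the same way. View $F_i$ as a genus-$g$ surface in $N_0\#(S^2\times S^2)$, and run Proposition~\ref{prop: suff} and the splitting theorem with $k=1$. Then turn the resulting $H(\Lambda)$ into a topological $S^2\times S^2$ summand of $X_i$ before quoting Hambleton--Kreck. The model-surface trick avoids all of this.
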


\begin{proof}
    The proof follows \cite[Theorem~\(7.4\)]{Sunukjian}. By the classification of indefinite unimodular forms and Freedman's classification of simply-connected~\(4\)-manifolds, the fact that~\(b_2(N) > \vert \sigma(N)\vert+2\) implies that we can write~\(N\cong N_0\# S^2\times S^2\) for~\(N_0\) simply connected. We have that \[b_2(N) + 2g-2\geq \max_{0\leq j<d}\left\vert \sigma (N)-\frac{2j(d-j)}{d^2}\, x\cdot x+\sigma_K (e^{\frac{2\pi i j}{d}})\right\vert\] implies \[b_2(N_0) + 2g\geq \max_{0\leq j<d}\left\vert \sigma (N_0)-\frac{2j(d-j)}{d^2}\, x\cdot x+\sigma_K (e^{\frac{2\pi i j}{d}})\right\vert.\] Thus, by Theorem~\ref{thm: main} there exists a genus~\(g\) surface~\(S\) with boundary~\(K\) simply embedded in~\(N_0\) such that~\([S]=[F_i]\). As in Sunukjian's proof, we appeal to a result of Wall \cite{WallDiffeomorphisms} to show that the class~\([S]\) is zero in the~\(H_2(S^2\times S^2)\) summand of~\(H_2(N)\). Indeed, after identifying~\(N\cong N_0 \# S^2\times S^2\), \cite[Theorem~\(2\)]{WallDiffeomorphisms} gives an ambient homeomorphism, supported in the interior of~\(N\), sending~\([S]\) to a class~\(x'\oplus 0 \in H_2(N_0)\oplus H_2(S^2\times S^2)\). Thus we can assume that~\([S]=[F_i]\) is of this form, after applying this homeomorphism.

    If we can show that~\(F_i\) is equivalent to~\(S\), we will be done. Indeed, by Theorem~\ref{thm: StableUnique} we have a homeomorphism~\(N\setminus \nu F_i \#^k (S^2\times S^2) \cong  N\setminus \nu S \#^k (S^2\times S^2)\).
    Since~\(S\) is embedded in~\(N_0\), we have that~\(N\setminus S \cong (N_0\setminus S) \# S^2\times S^2\). Furthermore, we have that~\(\pi_1(N\setminus F_i)\cong \Z_d \cong \pi_1(N\setminus S)\), and this map restricts to the identity on the boundary, so we can apply a cancellation result of Hambleton--Kreck \cite[Theorem~B\('\), Corollary~\(3.6\)]{HamKreck} to get a homeomorphism~\(N\setminus \nu F_i \cong  N\setminus \nu S \) which extends the identity map on the boundaries, given by the identification of the circles bundles~\(\partial (\nu S)\) and~\(\partial(\nu F_i)\). 

    Since this extends the identity map on the boundary, we can use the same compatible map~\(\nu S \to \nu F_i\) as in the Proof of Theorem~\ref{thm: StableUnique} to complete this homeomorphism of complements to an equivalence~\((N,S)\cong (N,F_i)\) as required.
\end{proof}

\bibliography{biblioLW}
\bibliographystyle{alpha}

\end{document}